\documentclass[reqno,11pt]{amsart}
\usepackage[hmargin=2.6cm,bmargin=2.55cm,tmargin=3.05cm]{geometry}
\usepackage{xcolor}
\usepackage{hyperref}
\usepackage{amssymb}
\usepackage{graphicx}
\usepackage{enumerate}
\usepackage{mathtools}
\mathtoolsset{showonlyrefs}
\begin{document}

\title[]
{On the existence of Nehari ground states for the Nonlinear Schr\"{o}dinger Equation on Discrete Graphs}
\author[]
{Setenay Akduman,  Matthias Hofmann, Sedef Karakili\c{c}}

\address{Setenay Akduman, Department of Mathematics, Izmir Democracy University, Izmir, 35140, Turkey }
%\email{setenay.akduman@idu.edu.tr}

\address{Sedef Karakili\c{c}, 
	Dokuz Eyl\"{u}l University, Faculty of Science, Department of Mathematics, Izmir Turkey}
%\email{ sedef.erim@deu.edu.tr\newline *corresponding author} 

\address{Matthias Hofmann, Fakult\"at Mathematik und Informatik, Fern\-Universit\"at in Hagen, D-58084 Hagen, Germany}

\dedicatory{In memory of Alexander Pankov}

\thanks{\emph{Acknowledgements.}  S.~Akduman and S. Karakili\c{c} were supported by the COST Action~24122. M. Hofmann was supported by the Portuguese government through FCT - Fundação para a Ciência e a Tecnologia, I.P., under the project SpectralOPs with reference 2023.13921.PEX. S. Akduman is grateful to Ognjen Milatovic and Stephen Shipman for interesting discussions.}

%\footnotetext[1]{\textbf{setenay.akduman@idu.edu.tr}, Department of Mathematics,
% 	Izmir Democracy University,
% 	Izmir, 35140, TURKEY}

% \footnotetext[2]{\textbf{sedef.erim@deu.edu.tr}, Dokuz Eyl\"{u}l University, Faculty of Science, Department of \mbox{Mathematics,} Izmir Turkey}
% %\footnotetext[4]{Corresponding author}
% \footnotetext[3]{\textbf{matthias.hofmann@fernuni-hagen.de }, Fakultät Mathematik und Informatik, FernUniversität Hagen, Germany}
% %\thanks{The authors would like to thank Ognjen Milatovic and Stephen Shipman for interesting discussions.}

\newcommand{\diam}{\operatorname{diam}}

\newcommand{\MH}[1]{{\color{blue} MH: #1}}
\newcommand{\SA}[1]{{\color{red} SA: #1}}
\newcommand{\SK}[1]{{\color{purple} SK: #1}}

\numberwithin{equation}{section}
\newtheorem{theorem}{Theorem}[section]
\newtheorem{corollary}[theorem]{Corollary}
\newtheorem{proposition}[theorem]{Proposition}
\newtheorem{definition}[theorem]{Definition}
\newtheorem{assumption}[theorem]{Assumption}
\newtheorem{lemma}[theorem]{Lemma}

\theoremstyle{definition}
\newtheorem{example}[theorem]{Example}
\newtheorem{remark}[theorem]{Remark}

\allowdisplaybreaks

\begin{abstract}
We study standing waves for the nonlinear Schr\"{o}dinger equation on a discrete graph. We characterize for a self-adjoint realizations of Schrödinger operators conditions related with the geometry of the graph that guarantee discreteness of the spectrum and study ground states on the generalized Nehari manifold in order to prove the existence of standing wave solutions in the self-focusing and defocusing cases. In this context, we show properties of the solutions, such as integrability. Finally, we discuss decay properties of solutions and the bifurcation of solutions from the trivial solution.
%We also prove the bifurcation of the solutions from the trivial ones.
\\
\it{Keywords:Discrete graph, Nonlinear Schr\"{o}dinger equation, generalized Nehari manifold.}
\end{abstract}
\maketitle

\section{Introduction}\label{s1}

The study of nonlinear partial differential equations on discrete structures has garnered significant attention in recent years, driven by both theoretical interest and practical applications in fields such as quantum mechanics, network theory, and nonlinear optics. Among these, the nonlinear Schrödinger equation (NLS) stands out as a fundamental model describing wave propagation in nonlinear media:
\begin{equation*}
(-\Delta +V) \psi +  f(\cdot, \psi)=0,
\end{equation*}
where $\psi=\psi(t, \cdot)$ is a complex-valued wave function, $\Delta$ denotes the Laplacian (or a discrete Laplacian on graphs, which we will introduce later), $V$ a real-valued potential, and $f$ characterizes the nonlinearity, often taken as a power function of the form $f(\cdot, \psi) = g(\cdot) |\psi|^{p-2} u$ with $p >1$. When the NLS equation is considered on discrete graphs, it models wave dynamics in structured media such as optical lattices and photonic crystals.

Prominent physical realizations of this model are found in context of Bose-Einstein condensates and Kerr waveguides, where the refractive index of the medium depends on the intensity of the light. In such settings, the NLS equation governs the evolution of optical pulses, capturing phenomena such as self-focusing, soliton formation, and modulational instability.  We refer to \cite{led-08} for further information on the model.

By employing variational techniques and critical point theory, we establish conditions under which ground state solutions for the NLS equation exist on discrete graphs, highlighting the influence of the graph topology and the nonlinearity of the equation. When considered on discrete graphs, the nonlinear Schrödinger equation (NLS) reveals a rich interplay between the geometry of the underlying graph and the analytical properties of its solutions. The discrete structure introduces new challenges and phenomena not present in the continuous setting, such as localization effects, spectral gaps, and topological constraints. For a comprehensive introduction to the NLS equation in both discrete and continuous frameworks, we refer to \cite{ab-pr-tr}.

A central concept in the analysis of nonlinear equations is the notion of ground states—solutions that minimize the associated energy functional, which can be used to find critical points. Among the variational techniques developed to identify such solutions, the Nehari manifold method has emerged as a particularly effective tool. Originally introduced by Zeev Nehari in \cite{Nehari1960} in the context of second-order ordinary differential equations, the method was later extended to a broader class of nonlinear elliptic problems. The classical Nehari manifold is defined as a natural constraint set where the energy functional's derivative vanishes in the direction of the function itself, allowing for the identification of nontrivial critical points. We refer for an introduction to the Nehari methods and their developments in \cite{sz-weth-10}.

To address more complex variational problems, particularly those involving strongly indefinite functionals, the generalized Nehari manifold was introduced in \cite{pankovphotonic} for the periodic discrete NLS equation. This extension, sometimes referred to as the Nehari–Pankov manifold, adapts the classical framework to settings where the energy functional is not bounded below on the entire space, and the origin is a saddle point rather than a local minimum. The generalized manifold is constructed by decomposing the underlying Hilbert space into orthogonal subspaces and imposing orthogonality conditions on the derivative of the functional. This approach was further developed in \cite{dp-kr-sz} applied to semilinear Schrödinger equations with weak monotonicity conditions.

The generalized Nehari manifold has since become a powerful tool in the study of nonlinear PDEs, enabling the identification of ground states and multiple solutions in settings where classical methods are insufficient. Its flexibility and effectiveness have made it particularly valuable in problems involving complex geometries.

This is especially evident when the NLS equation is considered on discrete graphs or quantum network and new analytical challenges appear, particularly due to the presence of nonlinearities. The existence of solutions in such models and stability of the ground states were previously studied in \cite{pel-05}. In \cite{ke-haeseler}, the connection between the existence of solutions with specific properties and the spectrum on infinite graphs is investigated. In \cite{pankovperiodic2}, the existence of nontrivial exponentially decaying solutions to periodic stationary discrete NLS equations was given. In a setting where the potential is unbounded, some elementary existence results for standing wave solutions of discrete NLS equations were shown in \cite{pan-10, pan-13, pankovphotonic}.

 In the following, we will investigate in a similar spirit the existence of solutions to the NLS equation on combinatorial graphs, and characterize several geometric assumptions that allow us to recover questions regarding the existence of solitons. We plan to address these questions for graphs with finite and infinite measures. In this context, we extend the spectral theory from \cite{ke-le 2}, developed for graphs with finite measure, to graphs with infinite measure, with the key results being
\begin{itemize}
%\item the characterization of a self-adjoint realization of the discrete Schrödinger operators; 
\item continuous and compact imbeddings of the energy space to weighted $\ell^p_m$-spaces for $p\in [1,\infty]$;
\item development of conditions on the potential in order to guarantee the discreteness of the spectrum. 
\end{itemize}
The obtained results contribute to the development of a unified and extensive framework for graphs with both finite and infinite measures, enabling us to  apply the critical point theory (see \cite{sz-weth-10}) to the NLS energy functional on graphs via the generalized Nehari manifold approach 
%The developed results help us to develop a unified approach in both the settings of graphs with finite and infinite measure, which allows us to apply critical point theory (see \cite{sz-weth-10}) to the NLS energy functional on graphs 
to investigate
\begin{itemize}
\item the existence of solutions to the NLS equation,
\item integrability properties of solutions,
\item relations to decay properties of eigenfunctions of Schrödinger operators,
\item bifurcation of solutions from the trivial solution.
\end{itemize}

Similar results were achieved previously in the metric graph case \cite{akd-pank} based on spectral theoretical results established in \cite{pankovakduman} for infinitely growing potential. In the related paper, the authors showed that the growth assumption on the potential is, in fact, a necessary and sufficient condition for the discreteness of the spectrum.  In the domain case, this was shown in \cite{Molchanov}. A novelty in our considered setting is the inclusion of geometric conditions in the framework to obtain results, and we conjecture that similar phenomena could be observed in the continuous case as well.

The article here adapts the method from \cite{akd-pank} in the discrete graph setting. Since the original method was strongly dependent on results from the domain case, let us summarize a few notable differences: 
\begin{itemize}
\item a spectral theoretical framework needed to be developed in order to include a large class of graphs in one unified approach;
\item the integrability of solutions in the metric graph case is related to the exponential growth of a graph. In the discrete case, this phenomenon is replaced by a volume growth assumption; 
\item results from the domain case can not as easily be adapted in this case, and the assumptions accordingly changed to account for this.
\end{itemize}

%We devote Section 6 to the bifurcation of solutions. 
%In the last section, we discuss our possible future goals in view of this study.

%Let us mention some other related works in the continuous case, which are related to the existence of solutions. The existence of solutions of NLS equation was extensively discussed in the metric graph setting. For periodic Schrödinger operators, to the best of our knowledge, \cite{pankovperiodic} is the first work which discusses existence of solutions on periodic metric graphs. The author of \cite{pankovperiodic} provided a number of results regarding the existence of these solitons, see e.g. \cite{pan-10, pankovphotonic, pankovperiodic2, pan-13}. Solutions for the NLS equation can also be obtained via the study of ground states of the NLS functional, which has been an active field of investigation in recent years (see \cite{AST15, MaPe16, NojaCacciapuoti, DST20, hof-21, PiSo22}). A number of related subjects such as the stability of the time-dependent Schrödinger equation was studied in \cite{NojaCacciapuoti, NPS15}.

%\MH{How to integrate: Questions such as stability, existence of ground states and the existence of solitons has been investigated on graph structures such as metric graphs, with only few results in a discrete setting. In \cite{pankovphotonic}, such a model was considered for a photonic crystal in a periodic setting, and in \cite{pankovperiodic}   an existence theory for exponentially decaying solutions developed in the continuous setting for periodic quantum graphs.}

Our article is structured as follows.
In Section~\ref{sec:mainresults}, we summarize the framework and the main results including the existence and bifurcation results. In Section~\ref{spectraltheorysection}, we develop the spectral-theoretic results for the proofs of the main results.  The remaining sections are dedicated to the proofs of the main results.

\section{Formulation of problem and main results}\label{sec:mainresults}
\subsection{Setting the stage}
Let $\mathcal{V}$ be an infinite countable set and $m:\mathcal{V}\to (0,\infty)$ define a  measure on $\mathcal{V}$ via $$m(A):=\sum_{x\in A} m(x),$$ for any subset $A$ of $\mathcal{V}$. We say that a set $A$ of $\mathcal V$ has finite measure if $m(A)< \infty$ and say that $m$ is a finite measure if $m(\mathcal{V})<\infty$. We assume that the graph is a weighted graph   $\Gamma$ over the  measure space $(\mathcal{V},m)$  (see e.g. \cite{ke-le 2}, \cite{ke-le}  and \cite{KeLeWo}). More precisely; $\Gamma$ is determined by a pair $(b,c)$ consisting of two  maps; the edge weight $b:\mathcal{V} \times \mathcal{V}\to [0,\infty)$ and the killing term $c:\mathcal{V}\to [0,\infty)$  satisfying the following  properties: 

\bigskip
\begin{assumption}[Assumptions on the edge weights.] \label{ass:edgeweights} \ \ 
\begin{itemize}\em

	\item [$(b_0)$] (vanishing on the diagonal) $b(x,x)=0$ for all $x\in \mathcal{V}$
	\item[$(b_1)$] (symmetry) $b(x,y)=b(y,x)$ for all $x,y\in \mathcal{V}$,
	\item[$(b_2)$] (summability) for all $x\in \mathcal{V}$, we have  $\sum\limits_{y\sim x} b(x,y) < \infty$, where $y\sim x$ if and only if $x$ and $y$ are adjacent; that is, $b(x,y)>0$,
%    \item[$(b_3)$] (local finiteness) $N_x=\{ y\in \mathcal{V}: y\sim x\}$ is finite,
\end{itemize}
\end{assumption}
These conditions allow us to define an essentially self-adjoint operator in terms of quadratic forms as in \cite{ke-le-12}.

%Note that in addition to the assumption \eqref{boundedness_diam}, if $\Gamma$ has a finite measure then  it is canonically compactifiable (see Corollary 4.4 in \cite{ke-le 2}) which  plays a crucial role in giving a criteria for an operator on $\mathcal{V}$ with finite measure to have discrete spectrum (see Corollary 5.2 of \cite{ke-le 2}) and will be discussed in section 2. It can be shown that canonical compactifiability is equivalent to a generalized notion of diameter being bounded (see \cite[Theorem~5.1]{ke-le 2})

Let $\ell^p_m$ stand for the Banach space of all $u:\mathcal{V}\to \mathbb{R}$ such that 
\begin{equation*}
\|u\|^p_{\ell^p_m}=\sum\limits_{x\in \mathcal{V}}m(x)|u(x)|^p<\infty,
\end{equation*}
for $p\in[1,\infty)$ and  $\ell^2_m$ be a real Hilbert space with the inner product
\begin{equation*}
(u,v)_m=\sum\limits_{x\in \mathcal{V}}m(x)u(x)v(x).
\end{equation*}
If $m\equiv 1$, we drop the index $m$ in the notation of these spaces. $\ell^{\infty}$ denotes the space of bounded functions on $\mathcal{V}$ endowed with the \normalfont sup-norm
$$\|u\|_{\infty}:=\sup\limits_{x\in\mathcal{V}}|u(x)|.$$
For any function $u:\mathcal{V}\to \mathbb{R}$, the $m$-Laplacian of $u$ is defined as
$$\Delta u(x)=\dfrac{1}{m(x)}\sum\limits_{y\sim x}b(x,y)(u(y)-u(x)).$$
We use the notation $C_c(\mathcal{V})$ for  the vector space of finitely supported functions on $\mathcal{V}$. 
% Let $H^1(\mathcal{V})$ be the completion of $C_c(\mathcal{V})$ under the norm
% $$\|u\|^2_{H^1}=\dfrac{1}{2}\sum\limits_{x,y\in\mathcal{V}, y \sim x}b(x,y)(u(y)-u(x))^2+\sum\limits_{x\in\mathcal{V}}m(x)u^2(x).$$
% Clearly, $H^1(\mathcal{V})$  is a Hilbert space with the inner product
% $$(u,v)_{H^1}=\dfrac{1}{2}\sum\limits_{x,y\in\mathcal{V}, y \sim x}b(x,y)(u(y)-u(x))(v(y)-v(x))+\sum\limits_{x\in\mathcal{V}}m(x)u(x)v(x),$$
% for each $u,v\in H^1(\mathcal{V})$.\\
First we introduce the operator $L_0$ in $\ell^2_m$ with domain $D(L_0)=C_c(\mathcal{V})$  defined by

\begin{equation}\label{schrodinger}
L_0u(x)=-\Delta u(x) +V(x)u(x),
\end{equation}
for each vertex $x\in \mathcal{V}$, where $V(x):=\frac{c(x)+m(x)}{m(x)}:\mathcal{V}\to [1,\infty)$. Applying the discrete analogue of the Green`s formula (see  Proposition 3.3 in \cite{ke-le-12}), one can show that $L_0$ is a symmetric operator. Moreover, one easily verifies that $L_0 \ge I$, where $I$ is the identity operator on $\ell^2_m$.\\

%From \cite[Theorem X.23]{ReedSimonII}, 
For a general reference on Dirichlet forms on combinatorial graphs we refer to \cite{KeLeWo}. Let $q_0$ be the quadratic form associated to the operator $L_0$ defined by  
 $q_0(u)=(L_0u,u),u\in D(L_0)$, $q_0$ is closable and 
 $$q_0(u)\geq \| u\|^2_{\ell^2_m},$$ for each $u\in D(L_0)$. Let us denote the closure of $q_0$ by $q^{(D)}$ with the domain $D(q^{(D)})$, which is called the \emph{Dirichlet form}, and the \emph{Neumann form}
\begin{eqnarray*}
q^{(N)}(u)&=&\dfrac{1}{2}\sum\limits_{x,y\in\mathcal{V},\, y \sim x}b(x,y)(u(y)-u(x))^2+\sum\limits_{x\in\mathcal{V}}m(x)V(x)u^2(x),\\
D(q^{(N)})&=&\left \{u\in \ell^2_m(\mathcal V) : \dfrac{1}{2}\sum\limits_{x,y\in\mathcal{V},\, y \sim x}b(x,y)(u(y)-u(x))^2+\sum\limits_{x\in\mathcal{V}}m(x)V(x)u^2(x)<\infty \right \},
\end{eqnarray*}
which describes a ``maximal'' closed extension of $q_0$. We consider a closed form $q$ with domain $E=D(q)$ associated to the graph with
\[
D(q^{(D)}) \subset D(q) \subset D(q^{(N)}) \quad \text{ and } q= q^{(N)} \text{ on } D(q)
\]
with associated operator $L$. 
%Then $E=D(Q)$ we define 
% \begin{eqnarray}
% \|u\|^2_{E}=\dfrac{1}{2}\sum\limits_{x,y\in\mathcal{V}, x \sim y}b(x,y)(u(y)-u(x))^2+\sum\limits_{x\in\mathcal{V}}m(x)V(x) u^2(x).\nonumber
% \end{eqnarray}
$E$ is a Hilbert space with the inner product
\begin{displaymath}
(u,v)_{E}=\dfrac{1}{2}\sum\limits_{x,y\in\mathcal{V}, x \sim y}b(x,y)(u(y)-u(x))(v(y)-v(x)))+\sum\limits_{x\in\mathcal{V}}m(x)V(x) u(x)v(x),\nonumber
\end{displaymath}
for each $u,v\in E$ with induced norm $\| u\|_E = q(u)^{1/2}$. 
Now, we formulate the assumptions on the potential that will guarantee the spectrum's discreteness for the operator $L$. Before, we need to define a concept, which guarantees the imbedding $E\hookrightarrow \ell^\infty(\mathcal V)$.

\begin{definition}\label{cancompsubset}
We say that a subset $\mathcal K$ of $\mathcal V$  is a canonically compactifiable subset if
\begin{align*}
r_{\mathcal K}:  u &\mapsto \chi_{\mathcal K} u \\
D(q^{(N)})&\to \ell^\infty,
\end{align*}
is a continuous operator, where $\chi_{\mathcal K}$ denotes the characteristic function on $\mathcal K$. We write in this case $\mathcal K \Subset \mathcal V$.  
\end{definition}

%\begin{definition}
%We say that a subset $K\subset \mathcal{V}$ is a canonically %\begin{equation}
%D(q_K) = \{ u \in D(q) \;|\; u(x) =0\; %\text{for all\;} x\in \mathcal {V} %\setminus K\} \subset \ell^\infty_m.
%\end{equation}
%We write in this case $K\Subset\mathcal %%%{V}$.
%\end{definition}

This generalizes a concept introduced in \cite{ke-le 2} (adapted as in \cite{HKSW23}):
\begin{definition}\label{cancompgraph}
    The graph $\Gamma=(\mathcal V, b, c)$ is canonically compactifiable if there exists a continuous imbedding
    \begin{gather*}
        D(q^{(N)}) \hookrightarrow  \ell^\infty.
    \end{gather*}
\end{definition}

\begin{remark}
    For a canonically compactifiable graph $\Gamma=(\mathcal V, b, c)$ we have in particular the continuous imbedding
    $$D(q) \hookrightarrow  \ell^\infty.$$ 
    Throughout the work we consider canonical compactifiability as an assumption and derive sufficient conditions in Section~\ref{spectraltheorysection}. However, the existence results that we develop continues to hold under weaker conditions that guarantee the imbedding $D(q) \hookrightarrow \ell^\infty$.  We would like to mention that in \cite{HKSW23} sufficient conditions for such $\ell^\infty$-inequalities were investigated for example in \cite{HKSW23}. 
\end{remark}

Throughout the paper, we assume: 

\begin{assumption}[growth assumption on the potential]\label{ass:pot1} We assume 
\begin{equation}\label{eq:infv}
     \sup\limits_{\mathcal{K} \Subset \mathcal{V}, \, m(\mathcal{K}) < \infty}\inf\limits_{x\in \mathcal{V}\setminus\mathcal{K}}V(x)= \infty.
     \end{equation}
\end{assumption}

\begin{remark}
A necessary and sufficient condition for the discreteness of the spectrum was shown for example in \cite[Theorem~20]{ke-le}. In \cite[Lemma~2.2]{grig-2} conditions for canonical compactifiability were considered as well as for the discreteness of the spectrum of the discrete Laplacian under stricter assumptions than the ones considered here.
\end{remark}

We will see in Section~\ref{sobolevsection} that Assumption~\ref{ass:pot1} guarantees the discreteness of the spectrum of $\sigma(L)$. We denote the eigenvalues of $L$ by 
\begin{equation}
   0\le \lambda_1 \le \lambda_2 \le \cdots, 
\end{equation}
where the eigenvalues are counted with their multiplicities, meaning that any eigenvalue appears as many times as its algebraic multiplicity indicates.  Let us define the (closed) subspaces generated by the eigenvectors with eigenvalues $<\lambda$, $=\lambda$ and
$>\lambda$ which are denoted by $E^-$, $E^0$ and
$E^+$, respectively:
\begin{equation}\label{eq:energydecomp}
E^-=\bigoplus\limits_{\{n:\lambda_n<\lambda\}} \ker(L-\lambda_n I),\quad E^0= \ker(L-\lambda I), \quad E^+=\bigoplus\limits_{\{n:\lambda_n>\lambda\}} \ker(L-\lambda_n I).
\end{equation}
By the spectral decomposition theorem we have $E= E^- \oplus E^0 \oplus E^+$.

%We will see $E \hookrightarrow \ell_m^1 \cap \ell^{\infty}$ if we additionally assume:
% \begin{assumption}[Assumption for better imbeddings] \label{ass:pot2}
% Additionally to Assumption~\ref{ass:pot1}, suppose
% \begin{equation}
%     \inf_{\mathcal K\Subset \mathcal V} \sum_{x\in V\setminus \mathcal K} \frac{1}{c(x)} < \infty.
% \end{equation}
% \end{assumption}
\subsection{Formulation of the problem}

In the present paper, we consider the discrete NLS equations 
\begin{equation}\label{eq:NLS}
-\Delta u(x)+V(x)u(x)-\lambda u(x)=\kappa f(x,u(x)),\;\;\;x\in\mathcal{V},
\end{equation}
where $u$ is a real-valued function on $\mathcal{V}$, $\kappa=1$ (self-focusing) or $\kappa=-1$ (defocusing), 
$\lambda$ is a real parameter, and the Laplacian  is defined  by 
\begin{equation}\label{laplacian}
\Delta u(x)=\dfrac{1}{m(x)}\sum\limits_{y \sim x}b(x,y)(u(y)-u(x)).
\end{equation}

 We also assume: 
\begin{assumption}[Assumptions on the nonlinearity] \label{ass:nonlinearity}
%\textbf{Assumptions on the nonlinearity:}	
We assume
\begin{itemize}\em
\item[$(f_1)$] $u\mapsto f(x,u)$ is a measurable, continuous function and $f(x,0)=0$ for all $x\in \mathcal V$.	
	\item[$(f_2)$]For all $x\in\mathcal{V}$,
	\begin{equation*}\label{eqn(f_1)}
	|f(x,u)|\leq \mu(R)|u|
	\end{equation*}
	whenever $|u|\leq R$, where $\mu(R)$ is non-decreasing, $\mu(R)>0$ if $R>0$, and $\mu(R)\to 0=\mu(0)$ as 
	$R\to 0$.
	
	\item[$(f_3)$] The function $f(x,u)/|u|$ (extended by $0$ to $u=0$) is strictly increasing.
	
	\item[$(f_4)$] $F(x,u)/u^2\to \infty$ as $|u|\to\infty$ for all $x\in\mathcal{V}$, where
	$$
	F(x,u)=\int_0^uf(x,s) ds.
	$$
\end{itemize}
\end{assumption}

Our goal is to use the method in \cite{sz-weth-10} under our assumptions to prove the existence of solutions of \eqref{eq:NLS}. Let us emphasize the flexibility in the approach. The method allows the treatment of the focusing and defocusing case and is not limited to the case, when local minimizers exist.

We will study \eqref{eq:NLS} via the critical points of the functionals 
\begin{equation}\label{energy}
J_\lambda(u)=\frac{1}{2}q_\lambda(u)-\kappa \sum_{x\in \mathcal{V}}m(x)F(x,u(x))\,,
\end{equation}
where the quadratic form $q_{\lambda}$ is defined by
\begin{equation*}
q_{\lambda}(u)=q(u)-\lambda(u,u)_m=q(u)-\lambda\|u\|^2_{\ell^2_m}\,
\end{equation*}
on the space $E$.   \\
\begin{proposition}\label{prop_for lemma}
    Let $\Gamma = (\mathcal V, b, c)$ be canonically compactifiable, then the functional $J_{\lambda}$ is of class $C^1$ on the energy space $E$ and its derivative $\nabla J_{\lambda}(u)$, $u\in E$, as a linear functional on $E$, is given by 
\begin{align*}
\nabla J_\lambda(u)v&=q_\lambda(u,v)-\kappa\sum_{x\in\mathcal{V}}m(x) f(x, u(x))v(x)\\
&= \sum_{x\in \mathcal{V}} \left (\Delta u(x) + V(x) u(x)\right ) v(x) - \kappa\sum_{x\in\mathcal{V}}m(x) f(x, u(x))v(x) \,,\quad \forall v\in E\,
\end{align*}
via $\kappa=1$ and $\kappa=-1$.

%Actually, the quadratic part of $J_\lambda$ is bounded, hence, is a continuous quadratic form on $E$. From the following lemma, we conclude that $J_\lambda$ is a $C^1$ functional on $E$.
\end{proposition}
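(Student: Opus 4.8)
The plan is to write $J_\lambda = Q - \Psi$ with the quadratic part $Q(u)=\tfrac12 q_\lambda(u)$ and the nonlinear part $\Psi(u)=\kappa\sum_{x\in\mathcal V} m(x)F(x,u(x))$, and to treat the two separately; then $\nabla J_\lambda = Q' - \Psi'$. The form $q$ is exactly the squared Hilbert norm on $E$, and since $V\ge 1$ gives $q(u)\ge \|u\|_{\ell^2_m}^2$, the embedding $E\hookrightarrow \ell^2_m$ is continuous with norm $\le 1$; hence $u\mapsto\|u\|_{\ell^2_m}^2$ is a bounded quadratic form on $E$ and $Q$ is $C^\infty$ with $Q'(u)v = q_\lambda(u,v)=q(u,v)-\lambda(u,v)_m$. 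This is routine Hilbert-space calculus and uses no extra hypothesis. The substance lies in $\Psi$, and this is where canonical compactifiability enters through the embedding $E\hookrightarrow\ell^\infty$.

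First I would check well-definedness. Integrating $(f_2)$ and using that $\mu$ is non-decreasing gives $|F(x,u)|\le \tfrac12\mu(|u|)|u|^2$, so for $u\in E$ with $R:=\|u\|_\infty\le C\|u\|_E$,
\[
\sum_{x\in\mathcal V} m(x)\,|F(x,u(x))| \;\le\; \tfrac12\mu(R)\sum_{x\in\mathcal V} m(x)|u(x)|^2 \;=\; \tfrac12\mu(R)\,\|u\|_{\ell^2_m}^2 \;<\;\infty .
\]
For the Gâteaux derivative at $u$ in direction $v$, I would write the difference quotient of $F(x,u(x)+tv(x))$ by the mean value theorem as $f(x,u(x)+\theta_{x,t}\,t\,v(x))v(x)$ with $\theta_{x,t}\in(0,1)$; this converges pointwise to $f(x,u(x))v(x)$ by $(f_1)$, and for $|t|\le 1$ it is dominated, via $(f_2)$ with $R=\|u\|_\infty+\|v\|_\infty$, by $\mu(R)\,(|u(x)|+|v(x)|)\,|v(x)|$, which is summable against $m$ by Cauchy--Schwarz in $\ell^2_m$. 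Dominated convergence then yields $\Psi'(u)v=\kappa\sum_x m(x)f(x,u(x))v(x)$, and the same Cauchy--Schwarz estimate gives $\|\Psi'(u)\|_{E^*}\le \mu(\|u\|_\infty)\|u\|_{\ell^2_m}\le \mu(C\|u\|_E)\|u\|_E$, so $\Psi'(u)\in E^*$.

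The main obstacle is the continuity of $u\mapsto\Psi'(u)$ from $E$ to $E^*$, which upgrades Gâteaux to Fréchet differentiability and thereby gives $J_\lambda\in C^1$. Given $u_n\to u$ in $E$, Cauchy--Schwarz bounds
\[
\|\Psi'(u_n)-\Psi'(u)\|_{E^*} \;\le\; \Big(\sum_{x\in\mathcal V} m(x)\,|f(x,u_n(x))-f(x,u(x))|^2\Big)^{1/2},
\]
so it suffices to show the right-hand side tends to $0$. Here \emph{both} embeddings are essential: $E\hookrightarrow\ell^\infty$ forces $\sup_n\|u_n\|_\infty\le R<\infty$, so that $(f_2)$ applies with a \emph{single} $R$, while $E\hookrightarrow\ell^2_m$ gives $u_n\to u$ in $\ell^2_m$ and pointwise. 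The summand converges to $0$ pointwise by continuity of $f$, and is dominated by $2\mu(R)^2 m(x)(|u_n(x)|^2+|u(x)|^2)$, whose total mass converges to $4\mu(R)^2\|u\|_{\ell^2_m}^2$; a generalized (Pratt-type) dominated convergence theorem with $n$-dependent dominating functions then closes the argument. Finally, the pointwise Euler--Lagrange form on the second line is obtained from the first by the discrete Green's formula (Proposition~3.3 in \cite{ke-le-12}), which identifies $q(u,v)$ with $\sum_{x\in\mathcal V} m(x)\bigl(-\Delta u(x)+V(x)u(x)\bigr)v(x)$.
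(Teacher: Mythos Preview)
Your proof is correct and follows essentially the same approach as the paper: split $J_\lambda$ into its quadratic part and the nonlinear part $\Psi$, use the embedding $E\hookrightarrow\ell^\infty$ together with $(f_2)$ to obtain uniform bounds, and apply dominated convergence in $\ell^2_m$ to establish $C^1$-regularity of $\Psi$. The only organizational difference is that the paper introduces an explicit truncation $f_R$ (agreeing with $f$ on $[-R,R]$ and extended linearly) and first proves that the corresponding functional $\Psi_R$ is $C^1$ on $\ell^2_m$, then observes that $\Psi=\Psi_R$ on each ball $B_R\subset E$; your direct argument via a uniform $\ell^\infty$ bound and a Pratt-type dominated convergence achieves the same thing without the detour.
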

An immediate consequence is that the critical points of $J_\lambda$ characterize the solutions to \eqref{eq:NLS}.
\begin{corollary}
    Let $\Gamma = (\mathcal V, b, c)$ be canonically compactifiable. $u\in E$ is a critical point of $J_\lambda$, i.e. $\nabla J_\lambda(u)=0$ if and only if $u\in E$ solves \eqref{eq:NLS}.
\end{corollary}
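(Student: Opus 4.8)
The plan is to read the Corollary directly off Proposition~\ref{prop_for lemma}, which already supplies a closed-form expression for the Fréchet derivative $\nabla J_\lambda(u)$ as a linear functional on $E$. Beyond the Proposition, only two structural ingredients are needed: that finitely supported functions are admissible test vectors, and the discrete Green's formula, which lets one rewrite the quadratic-form part of the gradient as a genuine vertex-wise sum.

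First I would rewrite the gradient in ``strong'' form. Starting from
\[
\nabla J_\lambda(u) v = q_\lambda(u,v) - \kappa \sum_{x\in\mathcal{V}} m(x) f(x,u(x)) v(x), \qquad q_\lambda(u,v) = q(u,v) - \lambda (u,v)_m,
\]
I apply the discrete Green's formula (Proposition~3.3 in \cite{ke-le-12}) to the off-diagonal part of $q(u,v)$, namely
\[
\tfrac12 \sum_{x,y\in\mathcal{V}} b(x,y)\bigl(u(y)-u(x)\bigr)\bigl(v(y)-v(x)\bigr) = -\sum_{x\in\mathcal{V}} m(x)\, \Delta u(x)\, v(x),
\]
which is valid for $u,v \in E$ since both lie in the form domain. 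Combining this with the potential and spectral terms collapses the gradient into the single weighted sum
\[
\nabla J_\lambda(u) v = \sum_{x\in\mathcal{V}} m(x)\Bigl[-\Delta u(x) + V(x) u(x) - \lambda u(x) - \kappa f(x,u(x))\Bigr] v(x), \qquad v \in E,
\]
whose bracket is exactly the residual of \eqref{eq:NLS} at the vertex $x$.

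With this representation both implications are one line. For the forward direction, if $\nabla J_\lambda(u) = 0$ I would test against the indicator $v = \mathbf{1}_{\{y\}}$; this is legitimate because $\mathbf{1}_{\{y\}} \in C_c(\mathcal{V}) = D(L_0) \subset D(q^{(D)}) \subset D(q) = E$. Since $m(y) > 0$, the vanishing of $\nabla J_\lambda(u)\mathbf{1}_{\{y\}} = m(y)[\cdots]$ forces the bracket to vanish at $y$, and letting $y$ range over $\mathcal{V}$ yields \eqref{eq:NLS} pointwise. Conversely, if $u$ solves \eqref{eq:NLS} the bracket vanishes at every vertex, so the displayed sum is identically zero for all $v \in E$, i.e.\ $\nabla J_\lambda(u) = 0$.

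The statement is genuinely a corollary, so there is no serious obstacle; the only points deserving care are the two structural facts just used. The first, $C_c(\mathcal{V}) \subset E$, is immediate from the nesting $D(q^{(D)}) \subset D(q)$ together with $C_c(\mathcal{V})$ being a form core for the Dirichlet form. The second, the validity of the discrete Green's formula for arbitrary form-domain elements (rather than merely compactly supported ones), is where one must check that the rearrangement of the double sum is justified by absolute convergence; this is guaranteed by $u,v \in E$ and is precisely the content already absorbed into Proposition~\ref{prop_for lemma}. Once these are in hand the equivalence follows without further computation.
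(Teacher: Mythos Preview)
Your argument is correct and matches the paper's approach: the paper gives no separate proof at all, simply declaring the Corollary ``an immediate consequence'' of Proposition~\ref{prop_for lemma}, whose second displayed equality already contains the Green's-formula rewriting you spell out. Your testing against the indicators $\mathbf{1}_{\{y\}}\in C_c(\mathcal V)\subset E$ is exactly the right way to make the implication explicit, and the converse is trivial once the bracket vanishes pointwise.
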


\subsection{Main results}
We can now introduce our main results on the existence and bifurcation of solutions of \eqref{eq:NLS}:
\begin{theorem}\label{t5.1}
Let $\Gamma = (\mathcal V, b, c)$ be canonically compactifiable.
	\begin{itemize}
		\item[$(a)$] For the case $\kappa=+1$, the  problem  \eqref{eq:NLS} has a
		nontrivial solution $u\in E$. If, in addition, $f(x,s)$ is odd with respect to $s$, then there
		exist infinitely many pairs of nontrivial solutions.
		
		\item[$(b)$] For the case $\kappa=-1$ and $\lambda>\lambda_1$, there exists a nontrivial solution of  the  problem  \eqref{eq:NLS} in
		$E$.
		If, in addition, the nonlinearity is odd and $\lambda>\lambda_n$, then the problem has at least $N$
		pairs of nontrivial solutions, where
		$$
		N=\sum_{k=1}^{n}\dim \ker(L-\lambda_k I)\,.
		$$
		\item[$(c)$] For the case $\kappa=-1$ and $\lambda\leq \lambda_1$, the  problem  \eqref{eq:NLS} has no nontrivial
		solution
		in $E$.
	\end{itemize}
\end{theorem}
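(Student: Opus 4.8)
The plan is to reduce all three statements to the generalized (Nehari--Pankov) manifold machinery of \cite{sz-weth-10}, after recording the spectral and embedding facts from Section~\ref{spectraltheorysection}. Throughout I use that Assumption~\ref{ass:pot1} forces $\sigma(L)$ to be discrete, so that only finitely many eigenvalues lie at or below $\lambda$; consequently $E^-\oplus E^0$ is finite dimensional, while $E^+$ is infinite dimensional. I also use the compact embedding $E\hookrightarrow \ell^p_m$ together with $E\hookrightarrow \ell^\infty$: the former supplies the compactness behind the Palais--Smale condition, the latter the behaviour of the nonlinear part near the origin.

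For case $(a)$, $\kappa=+1$, one has $J_\lambda(u)=\tfrac12 q_\lambda(u)-\Psi(u)$ with $\Psi(u)=\sum_{x}m(x)F(x,u(x))$, and decomposing $q_\lambda(u)=\|u^+\|_\lambda^2-\|u^-\|_\lambda^2$ on $E=E^+\oplus E^0\oplus E^-$ puts $J_\lambda$ exactly in the abstract form treated in \cite{sz-weth-10}. I would verify the structural hypotheses in turn: $\Psi\in C^1$ by Proposition~\ref{prop_for lemma}; $\Psi\ge 0$ with $\Psi(u)>0$ for $u\neq0$, since $(f_2)$ and $(f_3)$ force $\operatorname{sgn} f(x,u)=\operatorname{sgn} u$ and hence $F\ge 0$; the behaviour $\Psi(u)=o(\|u\|_E^2)$ as $u\to 0$, which follows from $(f_2)$, from $\mu(R)\to 0$ as $R\to 0$, and from $E\hookrightarrow\ell^\infty$; the superquadraticity $\Psi(u)/\|u\|_E^2\to\infty$ as $\|u\|_E\to\infty$ from $(f_4)$ via Fatou's lemma; and the strict monotonicity of $t\mapsto f(x,t)/|t|$ in $(f_3)$, which is precisely the condition ensuring that, for each $u$ with $u^+\neq0$, the functional $J_\lambda$ has a unique maximiser on the half-space $E^-\oplus E^0\oplus\mathbb{R}_{\geq 0}u^+$ and that this maximiser lies on the generalized Nehari manifold
\[
\mathcal M=\{u\in E\setminus(E^-\oplus E^0):\ \nabla J_\lambda(u)u=0 \text{ and } \nabla J_\lambda(u)w=0 \ \ \forall w\in E^-\oplus E^0\}.
\]
Finally the compactness: if $u_n\rightharpoonup u$ in $E$ then $\Psi(u_n)\to\Psi(u)$ and $\nabla\Psi(u_n)\to\nabla\Psi(u)$, which I would deduce from the compact embedding $E\hookrightarrow\ell^p_m$ together with the pointwise bound $(f_2)$ on the $\ell^\infty$-bounded sequence. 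Granting these, \cite{sz-weth-10} yields that $c:=\inf_{\mathcal M}J_\lambda>0$ is attained at a point $u_0$, which is a nontrivial critical point and hence a solution of \eqref{eq:NLS}. If $f(x,\cdot)$ is odd then $J_\lambda$ is even, and the $\mathbb{Z}_2$--genus argument of \cite{sz-weth-10} on $\mathcal M$ produces an unbounded sequence of critical values; since $E^+$ is infinite dimensional, this gives infinitely many pairs $\pm u_k$.

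For case $(b)$, $\kappa=-1$ with $\lambda>\lambda_1$, one has $J_\lambda(u)=\tfrac12 q_\lambda(u)+\Psi(u)$, and I would apply the same machinery to $I:=-J_\lambda$, for which
\[
I(u)=\tfrac12\bigl(\|u^-\|_\lambda^2-\|u^+\|_\lambda^2\bigr)-\Psi(u).
\]
This is again of Szulkin--Weth type, but now with the roles of $E^+$ and $E^-$ interchanged: the positive subspace is $E^-$ and the non-positive part is $E^+\oplus E^0$. The hypothesis $\lambda>\lambda_1$ guarantees $E^-\neq\{0\}$, so the associated manifold is nonempty and the ground state argument applies verbatim, producing a nontrivial critical point of $I$, equivalently of $J_\lambda$. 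If in addition $f$ is odd, then $I$ is even; since the positive subspace $E^-$ is now \emph{finite} dimensional with $\dim E^-\ge N=\sum_{k=1}^n\dim\ker(L-\lambda_k I)$ (the inequality because $\lambda>\lambda_n$ places each of $\lambda_1,\dots,\lambda_n$ strictly below $\lambda$), the genus estimate on the sphere of $E^-$ gives at least $\dim E^-\ge N$ pairs of nontrivial solutions.

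For case $(c)$, $\kappa=-1$ with $\lambda\le\lambda_1$, the quadratic form $q_\lambda$ is positive semidefinite on $E$, since every eigenvalue satisfies $\lambda_n\ge\lambda_1\ge\lambda$; in particular $E^-=\{0\}$. If $\nabla J_\lambda(u)=0$, testing with $u$ gives $q_\lambda(u)+\sum_{x}m(x)f(x,u(x))u(x)=0$, and both summands are nonnegative (the first by $\lambda\le\lambda_1$, the second since $f(x,s)s>0$ for $s\neq0$ by $(f_3)$), so both vanish and $(f_3)$ forces $u\equiv 0$. I expect the genuine work to lie not in the abstract application but in its two analytic inputs on the graph: first, the compactness underlying the Palais--Smale condition and the weak continuity of $\nabla\Psi$, which is exactly where canonical compactifiability and the potential growth of Assumption~\ref{ass:pot1} enter through the compact embeddings $E\hookrightarrow\ell^p_m$ of Section~\ref{spectraltheorysection}; and second, the careful bookkeeping of the finite-dimensional space $E^-\oplus E^0$ and the interchange of subspaces, so that the hypotheses of \cite{sz-weth-10}, originally phrased for an infinite-dimensional positive cone, remain valid and yield the correct multiplicity count in the defocusing case.
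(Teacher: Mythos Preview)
Your proposal is correct and follows essentially the same route as the paper: both reduce to the abstract Szulkin--Weth theorem by verifying weak lower semicontinuity and the strict inequality $\tfrac12\nabla\Psi(u)u>\Psi(u)>0$, the $o(\|u\|_E)$ behaviour of $\nabla\Psi$ at $0$ via $(f_2)$ and $E\hookrightarrow\ell^\infty$, the superquadratic condition via Fatou and $(f_4)$, complete continuity of $\nabla\Psi$ via the compact embedding $E\hookrightarrow\ell^2_m$, and the unique-maximiser property on $\hat E(w)$ from $(f_3)$, with the roles of $E^+$ and $E^-$ swapped when $\kappa=-1$. Two small remarks: the superquadraticity you need is the uniform statement $\Psi(t u_n)/t^2\to\infty$ on weakly compact subsets of $E\setminus\{0\}$ (not merely $\Psi(u)/\|u\|_E^2\to\infty$), which is what the Fatou argument actually gives; and your direct proof of part~(c) by testing against $u$ is a welcome addition, as the paper does not spell this case out.
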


\begin{remark}\label{existenceofcp}
We apply the critical point theory for $J_{\lambda}$ in order to prove the existence of critical points (see Appendix~\ref{s7}). Let us now very briefly summarize the approach. If we define $F= E^- \oplus E^0$ and for $u\in E\setminus F$ we consider the minimax problem
        \begin{equation}\label{eq:clambda}
        c_{\lambda}:=c= \inf_{w\in {E\setminus F} }\max_{\substack{u=v+tw\\ v\in F,\,t\in \mathbb R}} J_{\lambda}(u).
    \end{equation}
    Under the assumptions of Theorem~\ref{t5.1}, we will construct a critical point $u_\lambda \in E$ of $J_\lambda$ for which $J_\lambda(u_\lambda)= c_\lambda$ attains the critical level (see Theorem~\ref{t3.2} and Remark~\ref{r3.3}). We henceforth refer to $u_\lambda$ as the \emph{ground critical point} of $J_\lambda$.
\end{remark}
We continue the section with a result, which is an immediate consequence of the better Sobolev imbedding, which we will discuss in Proposition~\ref{prop:sobpimb}.

\begin{corollary}
Under the assumptions of Theorem~\ref{t5.1}, if 
\begin{equation}\label{eq:replacesexpgrowth}
    \inf_{\mathcal K\subset \mathcal V,\, m(\mathcal K) < \infty} \sum_{x\in V\setminus \mathcal K} \frac{(m(x))^2}{m(x)+c(x)} < \infty
\end{equation}
 holds, then the solutions of \eqref{eq:NLS} are in $\ell^p_m$ for all $p\in [1,\infty]$.
\end{corollary}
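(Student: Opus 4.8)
The plan is to reduce everything to two endpoint statements---membership in $\ell^\infty$ and in $\ell^1_m$---and then to reach the full range $p\in[1,\infty]$ by a single pointwise estimate rather than by an interpolation theorem. A solution $u$ of \eqref{eq:NLS} is a critical point of $J_\lambda$ and hence lies in $E$, so in particular $q(u)<\infty$; spelling out the Neumann form, this gives $\sum_{x\in\mathcal V} m(x)V(x)u(x)^2<\infty$. It is convenient to rewrite the weight in \eqref{eq:replacesexpgrowth} in terms of the potential: since $V(x)=\tfrac{c(x)+m(x)}{m(x)}$,
\[
\frac{(m(x))^2}{m(x)+c(x)}=\frac{m(x)}{V(x)},
\]
so the hypothesis says exactly that there is a set $\mathcal K\subset\mathcal V$ with $m(\mathcal K)<\infty$ for which $\sum_{x\in\mathcal V\setminus\mathcal K} m(x)/V(x)<\infty$. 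The $\ell^\infty$ endpoint is immediate: by the Remark following Definition~\ref{cancompgraph}, canonical compactifiability yields the continuous imbedding $D(q)\hookrightarrow\ell^\infty$, and $u\in E=D(q)$ gives $\|u\|_\infty<\infty$, which is the case $p=\infty$.

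For the $\ell^1_m$ endpoint I would fix the finite-measure set $\mathcal K$ supplied by the hypothesis and split $\sum_{x}m(x)|u(x)|$ into the contributions from $\mathcal K$ and from its complement. On $\mathcal K$ one uses boundedness and finiteness of measure, $\sum_{x\in\mathcal K}m(x)|u(x)|\le \|u\|_\infty\, m(\mathcal K)<\infty$. On the complement one applies Cauchy--Schwarz to the factorization $m(x)|u(x)|=\big(m(x)/V(x)\big)^{1/2}\big(m(x)V(x)\big)^{1/2}|u(x)|$, obtaining
\[
\sum_{x\in\mathcal V\setminus\mathcal K} m(x)|u(x)|\le\Big(\sum_{x\in\mathcal V\setminus\mathcal K}\frac{m(x)}{V(x)}\Big)^{1/2}\Big(\sum_{x\in\mathcal V\setminus\mathcal K} m(x)V(x)u(x)^2\Big)^{1/2},
\]
where the two factors are finite by the hypothesis and by $q(u)<\infty$, respectively. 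Hence $u\in\ell^1_m$.

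Finally, for arbitrary $p\in[1,\infty)$ one estimates pointwise $|u(x)|^{p}=|u(x)|\,|u(x)|^{p-1}\le \|u\|_\infty^{\,p-1}\,|u(x)|$ (using $p-1\ge0$ and $|u(x)|\le\|u\|_\infty$), so that $\sum_x m(x)|u(x)|^p\le\|u\|_\infty^{\,p-1}\|u\|_{\ell^1_m}<\infty$; together with the $p=\infty$ case this covers the whole range, and this is exactly the specialization to solutions of the imbedding $E\hookrightarrow\ell^p_m$ of Proposition~\ref{prop:sobpimb}. I expect the only genuinely delicate point to be at the $\ell^1_m$ endpoint: recognizing that the weight $m/V$ in \eqref{eq:replacesexpgrowth} is precisely what pairs against the finite energy $\sum m V u^2$ under Cauchy--Schwarz, and verifying that the infimum in \eqref{eq:replacesexpgrowth} indeed furnishes a finite-measure $\mathcal K$ making the tail sum finite; the remaining estimates are routine.
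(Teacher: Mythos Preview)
Your proposal is correct and follows essentially the same route as the paper: the corollary is deduced from the imbedding $E\hookrightarrow\ell^p_m$ of Proposition~\ref{prop:sobpimb}, whose proof combines the $\ell^\infty$ bound from canonical compactifiability with the $\ell^1_m$ estimate of Lemma~\ref{lem:lem1imbed} (the identical Cauchy--Schwarz factorization $m|u|=(m/V)^{1/2}(mV)^{1/2}|u|$ on $\mathcal V\setminus\mathcal K$) and then interpolates. The only cosmetic differences are that on $\mathcal K$ the paper bounds by $m(\mathcal K)^{1/2}\|u\|_{\ell^2_m}$ rather than your $m(\mathcal K)\|u\|_\infty$, and that the paper says ``by interpolation'' where you write out the pointwise inequality $|u|^p\le\|u\|_\infty^{p-1}|u|$.
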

\begin{remark}
Note that only the case $p\in[1,2)$ requires the additional assumption \eqref{eq:replacesexpgrowth} (see Section~\ref{subsec:interpolation}).
\end{remark}

\begin{remark}
    In \cite{Mmatrix}, the authors introduced an Agmon-type distance function governing the decay of eigenfunctions to discrete eigenvalue problems. In particular, decay estimates for the eigenvectors of the discrete Schrödinger operator (c.f. \cite{ste-23}). 

    In this article, we will not investigate the decay of solutions further. However, let us highlight how decay estimates can be used to show decay rates for the eigenfunctions of Schrödinger operators. Let us define for $u\in D(L)$
    $$
    V_0(x):= \frac{\kappa f(x,u(x))}{u(x)},
    $$
    then by $(f_2)$ we have $V_0 \in \ell^\infty$. Then an immediate consequence of the Kato--Rellich theorem is that $\mathfrak L = L+ V_0$ is a relatively compact perturbation of the operator $L$ and the spectrum $\sigma(\mathfrak L)$ is purely discrete, provided that $\sigma(L)$ is discrete. if $u\in D(L)$ is a solution of \eqref{eq:NLS}, then it is an eigenfunction of $\mathfrak L$. In particular, such decay estimates will be inherited for the solutions of \eqref{eq:NLS}.
 \end{remark}

% For the purpose of stating the next theorem, let
% \begin{equation}\label{hatL}
%     \hat L f := Lf + f,
% \end{equation}
% then the resulting operator has discrete spectrum in fact, then its eigenvalues
% \begin{equation}\label{hatLambda}
%    1\le \hat \lambda_1 \le \hat \lambda_2 \le \cdots, 
% \end{equation}
% where the eigenvalues are counted with their multiplicities, are characterized by $\hat \lambda_k = \lambda_k +1$ for all $k\in \mathbb N$.

  In the next result, we discuss the behavior of solutions as $\lambda\not \in \sigma(L)$ approaches an eigenvalue. In this context, we will obtain estimates on $\| u_{\lambda} \|_E$ for the ground critical points $u_{\lambda}$ in \eqref{energy} depending on the distance to the spectrum $$\delta(\lambda):= \operatorname{dist}( \lambda, \sigma(L)).$$

\begin{theorem}\label{theorem2}
 		%Let $\hat\lambda\in (\hat\lambda_{k-1},\hat\lambda_k)$ with $k>1$. Furthermore, we suppose
        Let $\Gamma = (\mathcal V, b, c)$ be canonically compactifiable. Suppose for some $q>2$, $p\ge 2$ and $a_0, a_1 >0$ that the nonlinearity satisfies
		\begin{align}\label{AR}
      0<qF(x,s)&\leq f(x,s)s,\;s\in \mathbb{R}\setminus \{0\}\\
            \label{6.1}
		F(x,s)&\geq a_0|s|^p\\
		\label{6.3}
		|f(x,s)|&\leq a_1|s|^{p-1}.
		\end{align}
        
        Let $u_\lambda$ be the ground critical point of \eqref{energy}, then:
        \begin{enumerate}[a)]
        \item If $\lambda < \lambda_1$ and $\kappa=1$, then
        there exists a constant $C>0$ such that 
 		\begin{equation}\label{eq:bifurcationfirst}
 		    \|u_{\lambda}\|_E\leq C(\lambda_{1}-\lambda)^{\frac{1}{p-2}}
 		\end{equation}
        \item If $\lambda \in (\lambda_{k-1}, \lambda_k)$ for $k>1$. 
        \begin{itemize} \item Suppose 
        $\delta(\lambda)=\lambda_{k}- \lambda$ and $\kappa=1$, then there exists a constant $C>0$ such that \begin{equation}\label{eq:bifurcation2}	
         \|u_{\lambda}\|_E\leq C(\lambda_k-\lambda
         )^{\frac{1}{p-2}}
         \end{equation}
         holds.
 		 
         \item Suppose $\delta(\lambda) = \lambda -  \lambda_{k-1}$ and $\kappa=-1$,  then there exists a constant $C>0$ such that
        \begin{equation}\label{eq:bifurcationsecond}	
         \|u_{\lambda}\|_E\leq C(\lambda-\lambda_{k-1})^{\frac{1}{p-2}}
        \end{equation}
        \end{itemize}
        \end{enumerate}
 		\end{theorem}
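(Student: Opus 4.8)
The plan is to control $\|u_\lambda\|_E$ in three steps: bound the critical value $c_\lambda$ from above, convert this into a sharp bound on $\|u_\lambda\|_{\ell^p_m}$, and then upgrade to the energy norm through the Euler--Lagrange equation $\nabla J_\lambda(u_\lambda)=0$. I carry out the focusing case $\kappa=+1$ (parts (a) and (b)(i)) explicitly; the defocusing case (b)(ii) follows by applying the same scheme to $-J_\lambda$, for which $-q_\lambda$ is positive definite on $E^-$ and negative definite on $E^+$, so that $E^-$ now plays the role of $E^+$ and the nearest eigenvalue is $\lambda_{k-1}$ --- this is exactly why the distance in \eqref{eq:bifurcationsecond} is $\lambda-\lambda_{k-1}$. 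Throughout I write $u_\lambda=u^-+u^+$ as in \eqref{eq:energydecomp} (with $E^0=\{0\}$ since $\lambda\notin\sigma(L)$) and abbreviate $\delta=\delta(\lambda)$.

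\emph{Bounding $c_\lambda$.} Let $\phi$ be a normalized eigenfunction for the eigenvalue closest to $\lambda$ (namely $\lambda_1$ in (a) and $\lambda_k$ in (b)(i)), so that $q_\lambda(\phi)=\delta\|\phi\|^2_{\ell^2_m}$, and take $\phi$ as the test direction in \eqref{eq:clambda}, giving $c_\lambda\le\max\{J_\lambda(v+t\phi):v\in F,\ t\in\mathbb R\}$. In the gap case $\lambda$ lies in the half of $(\lambda_{k-1},\lambda_k)$ nearer the bifurcation eigenvalue, so the distance to the opposite endpoint exceeds half the gap and $q_\lambda$ is strictly negative on $F=E^-$ with a $\lambda$-uniform constant: $q_\lambda(v)\le-\beta\|v\|_E^2$. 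Using \eqref{6.1}, the reverse triangle inequality and the embedding $E\hookrightarrow\ell^p_m$ from Proposition~\ref{prop:sobpimb} (with embedding constant $S_p$), I split into the regimes $S_p\|v\|_E\le\tfrac12|t|\,\|\phi\|_{\ell^p_m}$ and its complement; in both the supremum over $v$ is nonpositive up to the $|t|^p$ term, and the maximization reduces to that of $t\mapsto\tfrac{\delta}{2}t^2\|\phi\|^2_{\ell^2_m}-c|t|^p$, whose maximum is of order $\delta^{p/(p-2)}$. Hence $c_\lambda\le C\,\delta^{p/(p-2)}$.

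\emph{From $c_\lambda$ to the norm.} Testing $\nabla J_\lambda(u_\lambda)=0$ against $u_\lambda$ gives $q_\lambda(u_\lambda)=\sum_x m(x)f(x,u_\lambda)u_\lambda$, and the Ambrosetti--Rabinowitz inequality \eqref{AR} then yields $c_\lambda=J_\lambda(u_\lambda)-\tfrac1q\nabla J_\lambda(u_\lambda)u_\lambda\ge(\tfrac12-\tfrac1q)q_\lambda(u_\lambda)$, so $q_\lambda(u_\lambda)\le C\delta^{p/(p-2)}$. Since \eqref{AR} and \eqref{6.1} also give $q_\lambda(u_\lambda)=\sum_x m f(x,u_\lambda)u_\lambda\ge q a_0\|u_\lambda\|^p_{\ell^p_m}$, this already produces the sharp $\ell^p_m$-bound $\|u_\lambda\|_{\ell^p_m}\le C\delta^{1/(p-2)}$. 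To reach $\|\cdot\|_E$, I test $\nabla J_\lambda(u_\lambda)=0$ against $u^+$ and $u^-$ separately; by $q_\lambda$-orthogonality $q_\lambda(u^\pm)=\sum_x m f(x,u_\lambda)u^\pm$, and \eqref{6.3} with Hölder and $E\hookrightarrow\ell^p_m$ give $|q_\lambda(u^\pm)|\le a_1 S_p\|u_\lambda\|^{p-1}_{\ell^p_m}\|u^\pm\|_E$. On the weakly coercive bifurcating subspace, $q_\lambda(u^+)\ge(\delta/\lambda_k)\|u^+\|_E^2$, so $\|u^+\|_E\le(\lambda_k a_1 S_p/\delta)\|u_\lambda\|^{p-1}_{\ell^p_m}\le C\delta^{1/(p-2)}$, the factor $1/\delta$ being exactly absorbed by one power of the $\ell^p_m$-bound; on the complementary subspace $|q_\lambda(u^-)|\ge\beta\|u^-\|_E^2$, whence $\|u^-\|_E\le C\delta^{(p-1)/(p-2)}$ is of higher order. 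Adding the two gives $\|u_\lambda\|_E\le C\delta^{1/(p-2)}$; part (a) is the special case $E^-=\{0\}$, $u_\lambda=u^+$.

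\emph{Main obstacle.} The crux is this last transfer. A direct coercivity estimate of $\|u_\lambda\|_E$ from $c_\lambda$ fails because $q_\lambda$ is only weakly coercive in the bifurcating direction --- with a constant $\delta$ that degenerates as $\lambda$ tends to the eigenvalue --- and because $q_\lambda(u_\lambda)=q_\lambda(u^+)+q_\lambda(u^-)$ suffers cancellation between its positive and negative parts. Passing through $\ell^p_m$ avoids both difficulties at once: the $\ell^p_m$-bound is insensitive to the spectral splitting and already carries the correct power, and when reinserted into the Euler--Lagrange equation the degenerate factor $1/\delta$ is cancelled by the gained power $\delta^{(p-1)/(p-2)}$. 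A secondary point is the estimate for $c_\lambda$ in the defocusing case, where the subspace complementary to the test direction is infinite dimensional; there the maximization cannot rely on a finite-dimensional norm equivalence and is instead controlled by the $\lambda$-uniform coercivity of $q_\lambda$ on that subspace together with the reverse triangle inequality, as above.
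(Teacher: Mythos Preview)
Your three–step strategy (upper bound on $c_\lambda$, then $\ell^p_m$–bound via Ambrosetti--Rabinowitz, then upgrade to $\|\cdot\|_E$ via the Euler--Lagrange identity and spectral coercivity) coincides with the paper's. The paper packages the last two steps as a single lemma giving $\delta(\lambda)\|u\|_E^2\le C\kappa J_\lambda(u)$, but the arithmetic is the same. Where you genuinely differ is the technical device you use in place of the paper's Lemma~6.2 ($\ell^p_m$--boundedness of the spectral projections $P^\pm$): in your transfer step you bound $\|u^+\|_E$ and $\|u^-\|_E$ separately using only $E\hookrightarrow\ell^p_m$ and the two different coercivity constants, which is cleaner and avoids Lemma~6.2 entirely (for $p\ge2$ the embedding follows from $E\hookrightarrow\ell^2_m\cap\ell^\infty$, so you should invoke the interpolation in \S\ref{subsec:interpolation} rather than Proposition~\ref{prop:sobpimb}).

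There is, however, a real gap in your upper bound for $c_\lambda$. In Case~2 of your split ($S_p\|v\|_E>\tfrac12|t|\,\|\phi\|_{\ell^p_m}$) you drop $\Psi\ge0$ and obtain only $-\tfrac{\beta}{2}\|v\|_E^2\le -c''t^2$, a quadratic (not $|t|^p$) term; thus $J_\lambda\le\tfrac12(\delta\|\phi\|_{\ell^2_m}^2-2c'')t^2$, which is \emph{not} ``nonpositive up to the $|t|^p$ term'' and is in fact unbounded in $t$ whenever $\delta$ exceeds the fixed threshold $2c''/\|\phi\|_{\ell^2_m}^2$. So as written your argument yields $c_\lambda\le C\delta^{p/(p-2)}$ only for $\delta$ below a fixed constant, not uniformly over the half–gap. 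The paper avoids this by estimating $\|t\phi+v\|_{\ell^p_m}$ from below via the projection: $\|P^+(t\phi+v)\|_{\ell^p_m}=|t|\,\|\phi\|_{\ell^p_m}$, so $\|t\phi+v\|_{\ell^p_m}\ge b_1|t|$ with $b_1$ independent of $v$. In the focusing case this is free (on the finite–dimensional $E(\phi)=\mathbb R\phi\oplus E^-$ all norms are equivalent, so $P^+$ is $\ell^p_m$–bounded there without any extra hypothesis), and you could simply use that instead of your case split. In the defocusing case $E(\phi)=\mathbb R\phi\oplus E^+$ is infinite–dimensional and the paper genuinely uses Lemma~6.2; your reverse–triangle split cannot replace it uniformly in $\delta$, so for part (b)(ii) you either need to restrict to small $\delta$ or fall back on the projection bound.
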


        \begin{remark} 
            Note that, one example that satisfies the assumptions on the nonlinearity $(f_1)$-$(f_4)$, \eqref{AR}, \eqref{6.1}, \eqref{6.3} is the function $f(x,s) = g(x) |s|^{p-2}s$, where $p>2$ and $g(x)\ge c$ for some $c>0$. Then 
            $$ F(x,s) = \frac{g(x)}{p} |s|^p$$
            and one easily verifies all the properties.
        \end{remark}
%            \begin{corollary}
%               Under the assumptions of Theorem~\ref{t5.1} and Theorem~\ref{theorem2}, let $\kappa=1$, and $\lambda<\lambda_1$ be sufficiently close to $\lambda_1$. Then the solution $u_{\lambda}$ satisfies
%               $$\|u_{\lambda}\|_E\leq C(\lambda_1-\lambda)^{\frac{1}{p-2}},$$
%               where $C>0$ is independent of $\lambda$.
%             \end{corollary}
% \begin{corollary}
%     Under the assumptions of Theorem~\ref{t5.1} and Theorem~\ref{theorem2}, let $\lambda\in(\lambda_{k-1},\lambda_k)$, $k>1$, be sufficiently close to $\lambda_k$. Then the solution $u_{\lambda}$ satisfies
%     $$\|u_{\lambda}\|_{E}\leq C(\lambda_1-\lambda)^{\frac{1}{p-2}},$$
%               where $C>0$ is independent of $\hat\lambda$.
% \end{corollary}
\section{Preliminaries: Spectral theory of Schrödinger operators}\label{spectraltheorysection}

\subsection{On discrete Sobolev inequalities}\label{sobolevsection} First we review a condition that guarantees the imbedding $E\hookrightarrow  \ell^{\infty}$ for a class of graphs, and in the next step find conditions, where the imbedding holds in the general setting. \\ 

Canonical compactifiability was studied extensively in \cite{ke-le 2} and can be related to a geometric condition, that is closely related to the diameter of a graph. A natural choice for a metric on $\Gamma =(\mathcal V, b,c)$ is given via
\begin{equation}
 d(x,y) = \inf \left \{ \sum_{i=1}^n \frac{1}{b(x_{i-1}, x_i)}:\, (x_0, \ldots, x_n) \text{ is a path from $x$ to $y$}\right \}.
\end{equation}
The diameter of a set $\mathcal K \subset \mathcal V$ is then defined via
\begin{equation}
\diam_d(\mathcal K):= \sup_{x,y\in \mathcal K} d(x,y).
\end{equation}

It was shown in \cite[Corollary~4.4]{ke-le 2}:
\begin{proposition}\label{connected}
      Let  $\Gamma=(\mathcal V, b, c)$ be connected. Then $\Gamma=(\mathcal V, b,c)$ is canonically compactifiable if\begin{equation}\label{eq:finitediameter}
        \diam_d(\Gamma) := \sup_{x,y\in \mathcal V} d(x,y)< \infty.  
      \end{equation}
    \end{proposition}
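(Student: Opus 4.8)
The plan is to establish the continuous imbedding $D(q^{(N)}) \hookrightarrow \ell^\infty$ directly, by producing a constant $C>0$ (depending only on $\diam_d(\Gamma)$ and $m(\mathcal V)$) such that $\|u\|_\infty \le C\, q^{(N)}(u)^{1/2}$ for every $u \in D(q^{(N)})$. Since $V\ge 1$, the form norm $q^{(N)}(\cdot)^{1/2}$ dominates $\|\cdot\|_{\ell^2_m}$ and is therefore the natural norm on $D(q^{(N)})$, so such an estimate is exactly the asserted imbedding. I would fix a reference vertex $y$, write $|u(x)| \le |u(y)| + |u(x)-u(y)|$, and control the two terms separately: the \emph{oscillation} $|u(x)-u(y)|$ through the finite diameter, and the \emph{value} $|u(y)|$, for a well-chosen $y$, through the $\ell^2_m$-bound.

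For the oscillation term, fix $x,y$ and a path $(x_0,\dots,x_n)$ from $x$ to $y$; by the triangle inequality together with the weighted Cauchy--Schwarz inequality applied with weights $b(x_{i-1},x_i)$ and $1/b(x_{i-1},x_i)$,
\begin{equation*}
|u(x)-u(y)| \le \sum_{i=1}^n |u(x_i)-u(x_{i-1})| \le \Big(\sum_{i=1}^n b(x_{i-1},x_i)(u(x_i)-u(x_{i-1}))^2\Big)^{1/2}\Big(\sum_{i=1}^n \tfrac{1}{b(x_{i-1},x_i)}\Big)^{1/2}.
\end{equation*}
Restricting the infimum defining $d(x,y)$ to injective paths --- which leaves $d$ unchanged, since deleting cycles only decreases $\sum 1/b$ --- each edge is traversed at most once, so the first factor is bounded by the edge part of the Neumann form, and hence by $q^{(N)}(u)^{1/2}$, uniformly in the path. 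Taking the infimum over such paths then yields $|u(x)-u(y)| \le q^{(N)}(u)^{1/2}\, d(x,y)^{1/2} \le q^{(N)}(u)^{1/2}\,\diam_d(\Gamma)^{1/2}$, where connectedness guarantees $d(x,y)<\infty$.

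For the base value, I use that $V\ge 1$ gives $\|u\|_{\ell^2_m}^2 = \sum_{x}m(x)u(x)^2 \le q^{(N)}(u)$. Setting $a:=\inf_{x\in\mathcal V}|u(x)|$, one has $a^2\,m(\mathcal V) \le \sum_{x} m(x)u(x)^2 \le q^{(N)}(u)$, so $a \le q^{(N)}(u)^{1/2}\,m(\mathcal V)^{-1/2}$, with the convention that the right-hand side is $0$ when $m(\mathcal V)=\infty$ (which recovers the infinite-measure case, where necessarily $a=0$). Choosing vertices $y_k$ with $|u(y_k)|\to a$, applying the decomposition with $y=y_k$ together with the oscillation bound, and letting $k\to\infty$, I obtain
\begin{equation*}
\|u\|_\infty \le a + \diam_d(\Gamma)^{1/2}q^{(N)}(u)^{1/2} \le \big(m(\mathcal V)^{-1/2}+\diam_d(\Gamma)^{1/2}\big)\,q^{(N)}(u)^{1/2},
\end{equation*}
which is the desired inequality.

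The main obstacle is the oscillation estimate, specifically guaranteeing that the energy-along-the-path factor is controlled by the full Neumann form \emph{uniformly} over all competing paths; this is what forces the reduction to injective paths, so that no edge is counted twice. The treatment of the base value is then a short additional step which, via the bound on $\inf|u|$, disposes of the finite- and infinite-measure cases simultaneously --- the latter being precisely the extension beyond the finite-measure setting of \cite{ke-le 2}.
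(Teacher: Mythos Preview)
Your argument is correct. The paper does not actually prove this proposition---it cites \cite[Corollary~4.4]{ke-le 2}---so there is no proof in the paper to compare against directly. That said, your Cauchy--Schwarz-along-paths oscillation estimate is precisely the device the paper deploys a few lines later in its proof of Lemma~\ref{lem:compactsubset} (canonically compactifiable subsets), including the bound $\sup u - \inf u \le \diam_d^{1/2}\,q^{(N)}(u)^{1/2}$. Your additional step, bounding $\inf_{\mathcal V}|u|$ by $m(\mathcal V)^{-1/2}q^{(N)}(u)^{1/2}$ (with the convention that this is $0$ when $m(\mathcal V)=\infty$) to upgrade the oscillation bound to a genuine $\ell^\infty$-bound, is a clean and explicit way to handle the finite- and infinite-measure cases simultaneously; the paper's proof of Lemma~\ref{lem:compactsubset} makes the analogous jump from the oscillation bound to $\|\chi_{\mathcal K}f\|_\infty\le C\,q^{(N)}(f)^{1/2}$ without spelling this out.
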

    \begin{remark}\label{rmk:otherdirectioncomp}
        Due to \cite[Theorem~4.3]{ke-le 2}, for connected graphs the condition \eqref{eq:finitediameter} is equivalent to  
        \begin{align*}
            \widetilde E = \{ f: \mathcal V \to \mathbb R|\, q(f) < \infty\} \subset \ell^\infty
        \end{align*}
        when $c\equiv 0$.  By \cite[Theorem~3.2]{LSS18}, \eqref{eq:finitediameter} holds if and only if a global Poincar\'e inequality holds, i.e. there exists a positive constant $c>0$ with
        \begin{equation}
            \|f\|_{\mathcal V}^2 := \sup_{x\in \mathcal V} f(x) - \inf_{x\in \mathcal V} f(x) \le c\; q(f)
        \end{equation}
    for all $f\in \widetilde E$.

        Furthermore, even when $c\not \equiv 0$ there exists a metric $\sigma$ such that   
        \(
            \widetilde E \subset \ell^\infty
        \) 
        if and only if
        \begin{equation}
            \diam_\sigma(\Gamma) := \inf_{x,y\in \mathcal V} \sigma(x,y)< \infty.  
        \end{equation}

        Canonical compactifiability is hence strongly related to the geometry of the graph.
    \end{remark}

    We introduce an additional concept to generalize the spectral theory on graphs with infinite measure:

\begin{lemma}\label{lem:compactsubset}
   Let  $\Gamma=(\mathcal V, b, c)$ be a connected graph and $\mathcal K \subset \mathcal V$. Suppose $\diam_d(\mathcal{K}) < \infty$, then $\mathcal K \Subset \mathcal V$. \footnote{Recall the notation from Definition~\ref{cancompsubset}}
\end{lemma}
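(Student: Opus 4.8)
The plan is to establish continuity of the restriction–multiplication operator $r_{\mathcal K}$ directly, by producing a constant $C>0$ such that $\sup_{x\in\mathcal K}|u(x)| \le C\, q^{(N)}(u)^{1/2}$ for every $u\in D(q^{(N)})$; this is precisely boundedness of $r_{\mathcal K}\colon D(q^{(N)})\to\ell^\infty$. The argument localizes the reasoning behind Proposition~\ref{connected} to the set $\mathcal K$: the finite diameter $\diam_d(\mathcal K)$ controls the \emph{oscillation} of $u$ across $\mathcal K$, while the zeroth-order term of $q^{(N)}$ \emph{anchors} $u$ at a single base point. Assume $\mathcal K\neq\emptyset$ (otherwise the statement is trivial) and fix $x_0\in\mathcal K$.

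First I would prove the oscillation estimate. For any simple path $P=(x_0,\dots,x_n)$ in $\mathcal V$ from $x_0$ to a point $x\in\mathcal K$, telescoping and Cauchy--Schwarz with the weights $b(x_{i-1},x_i)$ give
\begin{equation*}
|u(x)-u(x_0)| \le \Bigl(\sum_{i=1}^n \tfrac{1}{b(x_{i-1},x_i)}\Bigr)^{1/2}\Bigl(\sum_{i=1}^n b(x_{i-1},x_i)\bigl(u(x_i)-u(x_{i-1})\bigr)^2\Bigr)^{1/2}.
\end{equation*}
Because the edges of a simple path are distinct, the second factor is bounded by the full edge part of $q^{(N)}(u)$, hence by $q^{(N)}(u)^{1/2}$, uniformly in $P$. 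Taking the infimum over (simple) paths of the first factor turns it into $d(x,x_0)^{1/2}$, using that deleting cycles only decreases the path length $\sum 1/b$, so the infimum over simple paths still equals $d(x,x_0)$. This yields $|u(x)-u(x_0)|\le \diam_d(\mathcal K)^{1/2}\, q^{(N)}(u)^{1/2}$ for all $x\in\mathcal K$.

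Next I would anchor the base-point value. Since $V\ge 1$ and $m(x_0)>0$, the zeroth-order term of $q^{(N)}$ gives $m(x_0)u(x_0)^2\le m(x_0)V(x_0)u(x_0)^2\le q^{(N)}(u)$, whence $|u(x_0)|\le m(x_0)^{-1/2}q^{(N)}(u)^{1/2}$. Combining this with the oscillation estimate through the triangle inequality gives, for every $x\in\mathcal K$,
\begin{equation*}
|u(x)| \le \bigl(m(x_0)^{-1/2}+\diam_d(\mathcal K)^{1/2}\bigr) q^{(N)}(u)^{1/2},
\end{equation*}
and taking the supremum over $x\in\mathcal K$ establishes boundedness of $r_{\mathcal K}$ with $C=m(x_0)^{-1/2}+\diam_d(\mathcal K)^{1/2}$, i.e. $\mathcal K \Subset \mathcal V$.

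The main point to handle carefully is that the metric $d$, and hence $\diam_d(\mathcal K)$, is defined through paths in the \emph{ambient} graph $\mathcal V$, which need not remain inside $\mathcal K$; connectedness of $\Gamma$ guarantees that such paths exist and have finite length (each edge weight is positive, so $1/b<\infty$), so the oscillation estimate is legitimate even when $\mathcal K$ is itself disconnected. The only genuine subtlety beyond the diameter bound is the anchoring step: the finite-diameter hypothesis controls \emph{differences} of $u$ but not its absolute size, and it is the strict positivity of $m$ together with $V\ge 1$ that supplies the missing pointwise bound at $x_0$.
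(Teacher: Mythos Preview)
Your proof is correct and follows essentially the same route as the paper: telescoping along a path plus Cauchy--Schwarz to control the oscillation on $\mathcal K$ by $\diam_d(\mathcal K)^{1/2}q^{(N)}(u)^{1/2}$, and then passing to a sup-norm bound. The paper's write-up compresses the last step (``and we have $\|\chi_{\mathcal K}f\|_\infty\le C\,q^{(N)}(f)^{1/2}$ for some $C>0$'') without spelling out the anchoring, whereas you make explicit that the zeroth-order term with $V\ge1$ gives $|u(x_0)|\le m(x_0)^{-1/2}q^{(N)}(u)^{1/2}$; you are also more careful than the paper about restricting to simple paths so that the edge sum is dominated by the Dirichlet part of $q^{(N)}$. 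These are refinements of presentation rather than a different argument.
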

\begin{proof}
    Suppose $\diam(\mathcal{K}) < \infty$, then for $f\in D(q^{(N)})$ and any path $$x=x_0,\; x_1,\;\ldots,\; x_n=y$$ with $x,y\in \mathcal K$, we have
    \begin{align*}
         f(x) - f(y) &= \sum_{j=1}^n f(x_j) - f(x_{j-1}) \\
         &\le \left (\sum_{j=1}^n \frac{1}{b(x_j, x_{j-1})} \right )^{1/2}  \left ( \sum_{j=1}^n b(x_j, x_{j-1}) | f(x_{j}) - f(x_{j-1})|^2 \right )^{1/2}.
    \end{align*}
    In particular, 
    \begin{equation}
        \sup_{x\in \mathcal K} f(x) - \inf_{y\in \mathcal K} f(y) \le \diam_d(\mathcal K)^{1/2}  q^{(N)}(f)^{1/2}
    \end{equation}
and we have $\|\chi_{\mathcal K} f\|_\infty \le C\; q^{(N)}(f)^{1/2}$ for some $C>0$. In particular, $r_{\mathcal K}: D(q^{(N)}) \to \ell^\infty$ is bounded.
\end{proof}

\begin{corollary}\label{cor:summability}
        Let $\mathcal K \subset \mathcal V$ is a connected subset of $\Gamma$.  Then $K\Subset \mathcal V$ if
        \begin{equation}
            \sum_{x,y\in \mathcal K, \, b(x,y)>0} \frac{1}{b(x,y)} < \infty.
        \end{equation}
    \end{corollary}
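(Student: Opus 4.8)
The plan is to reduce the statement directly to Lemma~\ref{lem:compactsubset} by showing that the summability hypothesis forces the metric diameter of $\mathcal K$ to be finite. First I would abbreviate
\[
S := \sum_{x,y\in \mathcal K,\, b(x,y)>0} \frac{1}{b(x,y)} < \infty,
\]
and fix an arbitrary pair of vertices $x,y\in \mathcal K$. Since $\mathcal K$ is a connected subset, the induced subgraph on $\mathcal K$ contains a path $x=x_0, x_1, \ldots, x_n = y$ with all $x_i\in\mathcal K$ and $b(x_{i-1},x_i)>0$ for each $i$; I would further reduce this to a \emph{simple} path (no repeated vertices), which is always possible by deleting cycles and only decreases the associated path sum.

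Next I would exploit the definition of the metric $d$ as an infimum over all connecting paths: using the simple path just constructed as one admissible competitor gives
\[
d(x,y) \le \sum_{i=1}^n \frac{1}{b(x_{i-1},x_i)}.
\]
Because the path is simple and contained entirely in $\mathcal K$, the edges $\{x_{i-1},x_i\}$ are pairwise distinct and each contributes a summand that already appears in the sum defining $S$; hence $\sum_{i=1}^n \frac{1}{b(x_{i-1},x_i)} \le S$. Taking the supremum over all $x,y\in\mathcal K$ yields $\diam_d(\mathcal K)\le S<\infty$.

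Finally I would invoke Lemma~\ref{lem:compactsubset}, which asserts that $\diam_d(\mathcal K)<\infty$ implies $\mathcal K \Subset \mathcal V$, to conclude. I do not expect any serious obstacle here: the argument is essentially a one-line comparison between the path metric and the total edge sum. The only point deserving care is the passage to a simple path lying inside $\mathcal K$, which guarantees both that no edge weight is reused and that every edge used is indeed counted in $S$ (rather than in some edge of $\mathcal V\setminus\mathcal K$); this is immediate from the connectedness of the induced subgraph.
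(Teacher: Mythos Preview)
Your proposal is correct and follows essentially the same approach as the paper: both arguments bound $\diam_d(\mathcal K)$ by the total edge sum $S$ and then conclude via Lemma~\ref{lem:compactsubset}. Your write-up is in fact more explicit than the paper's about \emph{why} $\diam_d(\mathcal K)\le S$ (passing to a simple path in $\mathcal K$ so that distinct edges are used, each counted in $S$), whereas the paper simply asserts the inequality and then rewrites the estimate from the proof of Lemma~\ref{lem:compactsubset} verbatim rather than citing it.
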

    \begin{proof}
        Then $\diam_d(\mathcal K)<\infty$ and we have
        \begin{align*}
            \sup_{x\in \mathcal K} f(x) - \inf_{y\in \mathcal K} f(y) &\le \diam_d(\mathcal K)^{1/2} q^{(N)}(f)^{1/2}\\ &\le \left ( \sum_{x,y\in \mathcal K, \, b(x,y)>0} \frac{1}{b(x,y)}\right ) ^{1/2} q^{(N)}(f)^{1/2} . 
        \end{align*}
        In particular, we have $\|f\|_\infty \le C \; q^{(N)}(f)^{1/2}$ for some $C>0$.
    \end{proof}

\begin{remark}
It is easy to see that under our assumption any finite set is a canonically compactifiable subset. It is immediate then that $r_{\mathcal K}(D(q^{(N)})) \to \ell^\infty$ is continuous.
\end{remark}

\begin{proposition}\label{lem:sobolev1}
Let $\Gamma=(\mathcal V, b, c)$ be a combinatorial graph and suppose 
\begin{equation}\label{Sobolev_Assumption}
   \sup_{\mathcal{K}\subset  \mathcal{V}:\, \diam_d(\mathcal K) <\infty} \inf_{x\in \mathcal{V}\setminus \mathcal{K}} (c(x) + m(x) )>0,
\end{equation}
then $D(q^{(N)})\hookrightarrow \ell^\infty$.
\end{proposition}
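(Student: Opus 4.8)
The plan is to exploit the fact that the diagonal part of the Neumann form is exactly $\sum_{x\in\mathcal V}(c(x)+m(x))\,u(x)^2$, since $m(x)V(x)=c(x)+m(x)$; thus the hypothesis \eqref{Sobolev_Assumption} is precisely a statement about the coefficients appearing in $q^{(N)}$. First I would extract from \eqref{Sobolev_Assumption} a single admissible set: because the supremum is strictly positive, there exist $\alpha>0$ and a set $\mathcal K_0\subset\mathcal V$ with $\diam_d(\mathcal K_0)<\infty$ such that $c(x)+m(x)\ge\alpha$ for every $x\in\mathcal V\setminus\mathcal K_0$. The idea is then to bound $\|u\|_\infty$ by splitting $\mathcal V=\mathcal K_0\cup(\mathcal V\setminus\mathcal K_0)$ and controlling each piece by $q^{(N)}(u)^{1/2}$, which is the natural norm on $D(q^{(N)})$ (recall $q^{(N)}(u)\ge\|u\|^2_{\ell^2_m}$, so it is genuinely a norm).

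On the \emph{inner} piece $\mathcal K_0$ I would invoke Lemma~\ref{lem:compactsubset}: since $\diam_d(\mathcal K_0)<\infty$ we have $\mathcal K_0\Subset\mathcal V$, i.e. the restriction map $r_{\mathcal K_0}:D(q^{(N)})\to\ell^\infty$, $u\mapsto\chi_{\mathcal K_0}u$, is bounded in the sense of Definition~\ref{cancompsubset}. This yields a constant $C_1>0$ with $\sup_{x\in\mathcal K_0}|u(x)|=\|\chi_{\mathcal K_0}u\|_\infty\le C_1\,q^{(N)}(u)^{1/2}$ for all $u\in D(q^{(N)})$.

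On the \emph{outer} piece I would use the diagonal term directly: for every $x\in\mathcal V\setminus\mathcal K_0$,
\[
\alpha\,|u(x)|^2\le (c(x)+m(x))\,u(x)^2\le\sum_{y\in\mathcal V}(c(y)+m(y))\,u(y)^2\le q^{(N)}(u),
\]
so that $\sup_{x\in\mathcal V\setminus\mathcal K_0}|u(x)|\le\alpha^{-1/2}\,q^{(N)}(u)^{1/2}$. Combining the two estimates gives $\|u\|_\infty\le\max\{C_1,\alpha^{-1/2}\}\,q^{(N)}(u)^{1/2}$, which is exactly the desired continuous imbedding $D(q^{(N)})\hookrightarrow\ell^\infty$.

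The one point requiring care is that Lemma~\ref{lem:compactsubset} is stated for connected graphs, whereas the Proposition allows a general combinatorial graph. I would handle this by observing that any set of finite $d$-diameter is automatically contained in a single connected component $\Gamma_0$ (since $d(x,y)<\infty$ forces $x$ and $y$ to be joined by a path), so the inner estimate may be applied on $\Gamma_0$ with its restricted form; the bound transfers because $q^{(N)}(u)$ dominates the corresponding form on $\Gamma_0$ and $\chi_{\mathcal K_0}u$ depends only on $u|_{\Gamma_0}$. Every vertex outside $\mathcal K_0$ --- in particular every vertex lying in the remaining components --- belongs to $\mathcal V\setminus\mathcal K_0$ and is therefore already covered by the outer estimate. (If $\mathcal K_0=\emptyset$, i.e. $\inf_{\mathcal V}(c+m)>0$, the outer estimate alone suffices.) This reduction to a component is the only genuinely delicate step; the two norm estimates themselves reduce to the Cauchy--Schwarz argument inside the cited Lemma and to positivity of the diagonal term.
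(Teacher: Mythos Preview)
Your proof is correct and follows essentially the same route as the paper: split $\mathcal V$ into a finite-diameter piece $\mathcal K_0$ (controlled via Lemma~\ref{lem:compactsubset}) and its complement (controlled via the diagonal term $(c+m)u^2$ of $q^{(N)}$), then combine. Your version is in fact slightly tidier---the paper first treats the case $\diam_d(\mathcal V)<\infty$ separately before doing exactly your argument, and it silently uses Lemma~\ref{lem:compactsubset} without addressing the connectedness hypothesis that you handle explicitly.
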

\begin{proof}
We separate the proof into two cases.
\begin{itemize}
		\item[$(i)$] If $\diam_d{(\mathcal{V})}<\infty,$ then we do not need any other assumption. From Proposition~\ref{connected}, we can say that the graph is canonically compactifiable. This implies a continuous imbedding $D(q^{(N)}) \hookrightarrow \ell^\infty$ by Lemma~\ref{connected}.

		\item[$(ii)$] If $\diam(\mathcal{V})$ is not finite, then by  \eqref{Sobolev_Assumption}, there exists $\tilde{c}>0$ such that 
$$\inf_{x\in \mathcal{V}\setminus \mathcal{K}} (c(x) + m(x))\geq \tilde{c},$$
for a subset $\mathcal{K}\subset \mathcal{V}$ with $\diam_{d}(\mathcal K) < \infty$, which implies
\begin{equation}\label{canonically1}
    c(x) + m(x)\geq \tilde{c}
\end{equation}
for all $x\in \mathcal{V}\setminus \mathcal{K}$. On the other hand,
$$q^{(N)}(u)\geq (c(x) + m(x))|u(x)|^2,$$ for all $u\in D(q^{(N)})$ and $x\in \mathcal{K}$. Together with this and \eqref{canonically1}, we can obtain

\begin{eqnarray}\label{outside}
  \dfrac{1}{\tilde{c}} q^{(N)}(u)   &\geq  \dfrac{1}{\tilde{c}}\left(c(x)+m(x)\right)|u(x)|^2 \geq|u(x)|^2 ,
 \end{eqnarray}
 for all $x\in \mathcal{V}\setminus \mathcal{K}$. Then $\mathcal K \Subset \mathcal V$ by Lemma~\ref{lem:compactsubset} and there exists $\tilde C >0$ such that
 \begin{equation}\label{inside}
 |u(x)|^2 \leq \dfrac{1}{\tilde{C}} q^{(N)}(u)
 \end{equation}
 for all $x\in \mathcal {K}$. Thus, combining \eqref{outside} and \eqref{inside} we have the required imbedding $D(Q^{(N)}) \hookrightarrow \ell^\infty$. 
 \end{itemize}        
 \end{proof}

 \subsection{On interpolation inequalities and better Sobolev imbeddings} \label{subsec:interpolation}
 If $E\hookrightarrow \ell^\infty$, then for $p \in [2,\infty]$ we have via interpolation $$E\hookrightarrow \ell_m^2\cap \ell^\infty \hookrightarrow \ell^p_m.$$

 A sufficient condition for the imbedding $E\hookrightarrow \ell^\infty$ was derived in Proposition~\ref{lem:sobolev1}, and we will derive in this section a stronger assumption that will guarantee the better imbedding $$E\hookrightarrow \ell_m^1 \cap \ell^\infty \hookrightarrow \ell^p_m$$ for all $p\in [1,\infty]$. Such an imbedding was for example obtained in \cite[Lemma~2.1]{grig-2} under the assumption $\tfrac{1}{V} \in \ell^1_m$. We will, however, generalize the idea to adapt it in our context under weaker conditions.

\begin{lemma}\label{lem:lem1imbed}
Suppose \eqref{eq:replacesexpgrowth} holds, 
%\begin{equation}
    %\inf_{\mathcal K\Subset \mathcal V} \sum_{x\in %V\setminus \mathcal K} \frac{1}{c(x)} < \infty.
%\end{equation}
then $E\hookrightarrow \ell^1_m$.
\end{lemma}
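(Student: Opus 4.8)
The plan is to establish the continuous embedding directly, by proving a single inequality $\|u\|_{\ell^1_m}\le C\,\|u\|_E$ for all $u\in E$, with $C$ depending only on the quantity appearing in \eqref{eq:replacesexpgrowth}. The starting observation is that the infimum condition \eqref{eq:replacesexpgrowth} is in fact equivalent to the global summability $\sum_{x\in\mathcal V}\frac{m(x)^2}{m(x)+c(x)}<\infty$. Indeed, pick a set $\mathcal K$ with $m(\mathcal K)<\infty$ witnessing a finite value of the tail sum $\sum_{x\in\mathcal V\setminus\mathcal K}\frac{m(x)^2}{m(x)+c(x)}$; since $c(x)\ge 0$ we have the elementary bound $\frac{m(x)^2}{m(x)+c(x)}\le m(x)$, so the contribution of $\mathcal K$ is at most $m(\mathcal K)<\infty$, and therefore the whole sum over $\mathcal V$ is finite. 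I set $C^2:=\sum_{x\in\mathcal V}\frac{m(x)^2}{m(x)+c(x)}<\infty$.

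Next I would apply the Cauchy--Schwarz inequality after the factorization $m(x)|u(x)|=\frac{m(x)}{\sqrt{m(x)+c(x)}}\cdot\sqrt{m(x)+c(x)}\,|u(x)|$, which gives
\[
\|u\|_{\ell^1_m}=\sum_{x\in\mathcal V}m(x)|u(x)|\le\Big(\sum_{x\in\mathcal V}\frac{m(x)^2}{m(x)+c(x)}\Big)^{1/2}\Big(\sum_{x\in\mathcal V}(m(x)+c(x))\,u(x)^2\Big)^{1/2}.
\]
The first factor equals $C$ by the previous step. For the second factor, I would recall that $q=q^{(N)}$ on $E$ and that, by the definition $V(x)=\frac{c(x)+m(x)}{m(x)}$, one has $m(x)V(x)=c(x)+m(x)$. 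Hence the diagonal (potential) part of the Neumann form is exactly $\sum_{x}(m(x)+c(x))\,u(x)^2$, and discarding the nonnegative edge term $\tfrac12\sum_{x,y}b(x,y)(u(y)-u(x))^2$ yields $\sum_{x}(m(x)+c(x))\,u(x)^2\le q(u)=\|u\|_E^2$. Combining this with the displayed inequality gives $\|u\|_{\ell^1_m}\le C\,\|u\|_E$, which is precisely the continuous embedding $E\hookrightarrow\ell^1_m$.

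As for difficulty, there is no genuine analytic obstacle: the estimate reduces to one application of Cauchy--Schwarz with the weight $\sqrt{m(x)+c(x)}$, which is exactly the weight controlled by the diagonal part of the energy form, so the factorization is essentially forced. The only point requiring care is the first step, namely recognizing that the infimum formulation of \eqref{eq:replacesexpgrowth} already entails the global summability of $m(x)^2/(m(x)+c(x))$; once this reduction is made, the remainder of the argument is immediate and requires no use of the $\ell^\infty$-embedding or of connectedness.
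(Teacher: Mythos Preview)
Your proof is correct and rests on the same core device as the paper's argument: Cauchy--Schwarz against the weight $\sqrt{m(x)+c(x)}=\sqrt{m(x)V(x)}$, which is dominated by the diagonal part of the energy. The only difference is in bookkeeping. The paper keeps the set $\mathcal K$ with $m(\mathcal K)<\infty$ and splits $\|u\|_{\ell^1_m}$ into the tail over $\mathcal V\setminus\mathcal K$ (bounded by the same weighted Cauchy--Schwarz you use) and the piece over $\mathcal K$ (bounded by $m(\mathcal K)^{1/2}\|u\|_{\ell^2_m}$ via plain Cauchy--Schwarz). You instead observe at the outset that the elementary bound $\tfrac{m(x)^2}{m(x)+c(x)}\le m(x)$ upgrades the tail condition \eqref{eq:replacesexpgrowth} to global summability of $m^2/(m+c)$, after which a single global Cauchy--Schwarz suffices. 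This is a mild but genuine simplification: it avoids the splitting, yields a single explicit constant, and makes transparent that \eqref{eq:replacesexpgrowth} is equivalent to $\sum_{x\in\mathcal V}\tfrac{m(x)^2}{m(x)+c(x)}<\infty$.
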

\begin{proof}
Let $\mathcal K\subset \mathcal V,\, m(\mathcal K) < \infty $ such that $$\sum\limits_{x\in \mathcal{V}\setminus\mathcal{K}}\dfrac{(m(x))^2}{c(x)+m(x)}< \infty.$$ Then for $u\in E$ we have
     \begin{align*}
			\sum\limits_{x\in \mathcal{V}\setminus \mathcal{K}}m(x)|u(x)| &= \sum\limits_{x\in \mathcal{V}\setminus \mathcal{K}} m(x)
            \;\dfrac{1}{{V(x)}^{1/2}} {{V(x)}^{1/2}}|u(x)|\\ &\leq   \left(\sum\limits_{x\in \mathcal{V}\setminus \mathcal{K}}m(x)\dfrac{1}{V(x)} \right)^{1/2}\left(\sum\limits_{x\in \mathcal{V}\setminus \mathcal{K}} m(x)V(x)|u(x)|^2\right)^{1/2}.\\
            &\le \left(\sum\limits_{x\in \mathcal{V}\setminus \mathcal{K}}\dfrac{(m(x))^2}{c(x)+m(x)} \right)^{1/2}\|u\|_E.
		\end{align*} 
Furthermore, since  $\mathcal K\subset \mathcal V,\, m(\mathcal K) < \infty$, we have 
    \begin{displaymath}
        \sum_{x\in \mathcal K } m(x) u(x) \le m(\mathcal K)^{1/2} \left(\sum\limits_{x\in  \mathcal{K}}m(x)|u(x)|^2 \right)^{1/2} \le m(\mathcal K)^{1/2} \| u\|_E
    \end{displaymath}
and we conclude $E\hookrightarrow \ell_m^1$.
\end{proof}

% \begin{lemma}\label{lem:leminfimbed}
% Suppose \eqref{eq:replacesexpgrowth} holds,
% then $E\hookrightarrow \ell^\infty$.
% \end{lemma}
% \begin{proof}
%     Let $\mathcal{K}\Subset \mathcal{V}$ such that $$\sum\limits_{x\in \mathcal{V}\setminus\mathcal{K}}\dfrac{1}{c(x)+m(x)}< \infty.$$ Then $$\frac{1}{c(x)+m(x)}\leq C$$ for some $C>0$. This implies that
%     $$\inf\limits_{\mathcal{K}\Subset \mathcal{V}}c(x)+m(x)\geq \frac{1}{C}>0.$$
%     Then \eqref{Sobolev_Assumption} and by Lemma~\ref{lem:sobolev1} we have $E\subset \ell^{\infty}$.
    
% \end{proof}

\begin{proposition}\label{prop:sobpimb}
Let $\Gamma=(\mathcal V, b,c)$ be canonically compactifiable. Suppose \eqref{eq:replacesexpgrowth} holds,
then $E\hookrightarrow\ell^p_m$ for all $p\in [1,\infty]$.
\end{proposition}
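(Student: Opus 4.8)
The plan is to combine the two imbeddings already established in the preceding lemmas and then close the scale by interpolation. Specifically, the hypothesis gives me two endpoints: the $\ell^\infty$-endpoint from canonical compactifiability, and the $\ell^1_m$-endpoint from Lemma 3.10. I would first record these explicitly: since $\Gamma$ is canonically compactifiable, the Remark following Definition 2.8 gives the continuous imbedding $E = D(q) \hookrightarrow \ell^\infty$, so there is $C_\infty > 0$ with $\|u\|_\infty \le C_\infty \|u\|_E$ for all $u \in E$. Separately, since \eqref{eq:replacesexpgrowth} is assumed, Lemma 3.10 yields $E \hookrightarrow \ell^1_m$, i.e. there is $C_1 > 0$ with $\|u\|_{\ell^1_m} \le C_1 \|u\|_E$.

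Let me outline the main steps in order.

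\begin{itemize}
\item[(1)] Record the two endpoint imbeddings $E\hookrightarrow \ell^\infty$ and $E\hookrightarrow \ell^1_m$ as above.
\item[(2)] Observe that for any $p \in (1,\infty)$ and any $u \in E$ one has the pointwise estimate $m(x)|u(x)|^p = m(x)|u(x)| \cdot |u(x)|^{p-1} \le \|u\|_\infty^{\,p-1}\, m(x)|u(x)|$, and summing over $x\in\mathcal V$ gives
\[
\|u\|_{\ell^p_m}^p = \sum_{x\in\mathcal V} m(x)|u(x)|^p \le \|u\|_\infty^{\,p-1}\,\|u\|_{\ell^1_m}.
\]
\item[(3)] Insert the two endpoint bounds from step (1) to obtain
\[
\|u\|_{\ell^p_m}^p \le \big(C_\infty \|u\|_E\big)^{p-1}\,\big(C_1 \|u\|_E\big) = C_\infty^{\,p-1} C_1\, \|u\|_E^{\,p},
\]
whence $\|u\|_{\ell^p_m} \le C_\infty^{(p-1)/p} C_1^{1/p}\,\|u\|_E$, proving $E\hookrightarrow \ell^p_m$ for $p\in(1,\infty)$.
\item[(4)] Handle the two boundary values $p=1$ and $p=\infty$ directly from step (1), and collect all cases to conclude $E\hookrightarrow \ell^p_m$ for every $p\in[1,\infty]$.
\end{itemize}

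The argument is essentially a one-line interpolation, so there is no serious obstacle once the two endpoints are in hand; the mild subtlety is simply to verify that the interpolation constant stays finite uniformly — which it does, since $C_\infty^{(p-1)/p} C_1^{1/p} \le \max(1,C_\infty)\max(1,C_1)$ is bounded independently of $p$. I would also remark that the decomposition $\ell^1_m \cap \ell^\infty \hookrightarrow \ell^p_m$ referenced in Section~\ref{subsec:interpolation} is exactly what step (2)–(3) make quantitative, so this proof merely instantiates the abstract interpolation statement preceding the proposition. The reliance on canonical compactifiability enters only through the $\ell^\infty$-endpoint; the genuinely new input is the $\ell^1_m$-endpoint supplied by assumption \eqref{eq:replacesexpgrowth}, which is precisely why the Remark following the corollary notes that only the range $p\in[1,2)$ requires this extra hypothesis.
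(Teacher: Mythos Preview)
Your proposal is correct and follows exactly the paper's approach: establish the two endpoint imbeddings $E\hookrightarrow\ell^\infty$ (from canonical compactifiability) and $E\hookrightarrow\ell^1_m$ (from Lemma~\ref{lem:lem1imbed}), then interpolate. The only difference is that the paper simply invokes ``interpolation'' in one line, whereas you spell out the elementary inequality $\|u\|_{\ell^p_m}^p \le \|u\|_\infty^{\,p-1}\|u\|_{\ell^1_m}$ explicitly; this is a cosmetic, not substantive, distinction.
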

\begin{proof}
 By our assumptions, together with Lemma~\ref{lem:lem1imbed} we have $E\hookrightarrow \ell^{\infty}\cap \ell^1_m$. Then by interpolation we have 
 $$ E\hookrightarrow \ell^p_m$$
 for all $p\in [1,\infty]$.
\end{proof}

\subsection{On compact imbeddings and discreteness of spectrum}

The concept of canonical compactifiability was previously used in \cite{ke-le 2} in order to discuss the discreteness of the spectrum of the Laplacian. \\

 First we will prove that if $m(\mathcal V) < \infty$, then we have a compact imbedding $ E \hookrightarrow \ell^2$.
\begin{lemma}\label{lem:l2imbedding}
Let $\mathcal K \subset \mathcal V$ be a subset with $m(\mathcal K) < \infty$, then $r_{\mathcal K}:\ell^\infty \hookrightarrow \ell^p_m$ defined via
\begin{equation*}
(r_{\mathcal K}f)(x)=\begin{cases}
    f(x), &\quad x\in \mathcal K, \\
    0, &\quad x \not \in \mathcal K
\end{cases}
\end{equation*}
is compact  for all $p\in [1,\infty)$.
\end{lemma}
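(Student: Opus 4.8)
The plan is to show compactness by exhibiting $r_{\mathcal K}$ as the operator-norm limit of finite-rank operators, exploiting the summability $m(\mathcal K) = \sum_{x\in \mathcal K} m(x) < \infty$. First I would record boundedness: for $f\in \ell^\infty$ one has
\begin{equation*}
\|r_{\mathcal K} f\|_{\ell^p_m}^p = \sum_{x\in \mathcal K} m(x) |f(x)|^p \le \|f\|_\infty^p \sum_{x\in \mathcal K} m(x) = m(\mathcal K)\, \|f\|_\infty^p,
\end{equation*}
so that $r_{\mathcal K}$ is well defined and bounded with $\|r_{\mathcal K}\| \le m(\mathcal K)^{1/p}$.

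Since $\mathcal V$ is countable, I would enumerate $\mathcal K = \{x_1, x_2, \ldots\}$ and set $\mathcal K_n := \{x_1, \ldots, x_n\}$, a finite set, together with the truncated operators $r_n := r_{\mathcal K_n}$, i.e. $r_n f = \chi_{\mathcal K_n} f$. Each $r_n$ maps into the space of functions supported on the finite set $\mathcal K_n$, hence has finite rank (dimension at most $n$) and is in particular compact. The key estimate is then the tail bound
\begin{equation*}
\|(r_{\mathcal K} - r_n) f\|_{\ell^p_m}^p = \sum_{x\in \mathcal K \setminus \mathcal K_n} m(x) |f(x)|^p \le \|f\|_\infty^p \sum_{x\in \mathcal K \setminus \mathcal K_n} m(x),
\end{equation*}
which gives $\|r_{\mathcal K} - r_n\| \le \bigl( \sum_{x\in \mathcal K \setminus \mathcal K_n} m(x) \bigr)^{1/p}$.

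Because $\sum_{x\in \mathcal K} m(x) = m(\mathcal K) < \infty$, the tail $\sum_{x\in \mathcal K\setminus \mathcal K_n} m(x)$ tends to $0$ as $n\to\infty$, so $r_n \to r_{\mathcal K}$ in operator norm. As a norm limit of finite-rank (hence compact) operators, $r_{\mathcal K}$ is compact, which is the claim for every $p\in[1,\infty)$. An alternative route, which I would keep in reserve, is a direct diagonal argument: given a bounded sequence $(f_n)$ in $\ell^\infty$ with $\|f_n\|_\infty \le M$, extract by diagonalization a subsequence converging pointwise on the countable set $\mathcal K$, and pass to the $\ell^p_m$-limit using dominated convergence with the summable dominating function $x\mapsto M^p m(x)$ on $\mathcal K$.

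There is no genuine obstacle here; the argument is a routine finite-rank approximation. The only point that must be used in an essential way is the finiteness of $m(\mathcal K)$, which makes the tail of $\sum_{x\in\mathcal K} m(x)$ controllable and simultaneously accounts for the failure of compactness when $p=\infty$ (where the tail estimate degenerates), explaining the restriction to $p\in[1,\infty)$ in the statement.
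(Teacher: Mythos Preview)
Your proof is correct and follows exactly the same strategy as the paper: approximate $r_{\mathcal K}$ in operator norm by the finite-rank truncations $r_{\mathcal K_n}$ onto finite exhausting subsets, using the summability $m(\mathcal K)<\infty$ to control the tail. You have in fact supplied more detail than the paper (the explicit boundedness and tail estimates), but the approach is identical.
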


\begin{proof}
Denote by $\mathcal K^n$ be a sequence of finite subsets with $\mathcal K = \bigcup_{n\in \mathbb N} \mathcal K^n$ and let $r_{\mathcal K^n}: \ell^\infty \to \ell_{m}^p$ be the restriction operator 
\begin{equation}
(r_{\mathcal K^n}f)(x)=\begin{cases}
    f(x), &\quad x\in \mathcal K^n, \\
    0, &\quad x \not \in \mathcal K^n
\end{cases}
\end{equation}
Then each $r_{\mathcal K^n}$ is finite rank and the sequence $(r_{\mathcal K^n})_{n\in \mathbb N}$ converges to the imbedding $\iota: \ell^\infty \to \ell^p_m$ in operator norm, thus $\iota$ is compact.
\end{proof}

% \begin{corollary}\label{lem:sobolev}
%     Suppose $\Gamma=(\mathcal V, b, c)$ is canonically compactifiable and $m(\mathcal V) < \infty$, then $ E \hookrightarrow \ell^2_m$ is compact. In particular, $L$ has pure discrete spectrum.
% \end{corollary}
% \begin{proof}
% Due to compact canonical compactifiability and Lemma~\ref{lem:l2imbedding}, the imbedding from $E \hookrightarrow \ell^2_m$ is compact as a composition of a continuous and compact imbedding. Due to the compact imbedding. Thus, $L$ has compact resolvent and by standard results, $L$ has pure discrete spectrum. 
% \end{proof}

 \begin{proposition} \label{lem:sobolev2}  $E \hookrightarrow \ell^2_m$ is compact. In particular, $L$ has pure discrete spectrum.
 \end{proposition}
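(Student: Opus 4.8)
The plan is to realize the canonical embedding $\iota\colon E\hookrightarrow \ell^2_m$ as an operator-norm limit of compact operators, using the growth Assumption~\ref{ass:pot1} to make the ``tail'' of the embedding uniformly small. Recall that $\|u\|_E^2 = q(u)$ and, since $q=q^{(N)}$ on $E$, we have $\sum_{x\in\mathcal V} m(x)V(x)u(x)^2 \le \|u\|_E^2$ for all $u\in E$.

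First I would fix $\varepsilon>0$ and invoke Assumption~\ref{ass:pot1} to choose a canonically compactifiable subset $\mathcal K\Subset \mathcal V$ with $m(\mathcal K)<\infty$ and $\inf_{x\in\mathcal V\setminus\mathcal K}V(x)\ge \varepsilon^{-2}$. The above coercivity of $q$ then yields the tail estimate
\[
\sum_{x\in\mathcal V\setminus\mathcal K} m(x)|u(x)|^2
\;\le\; \varepsilon^2 \sum_{x\in\mathcal V\setminus\mathcal K} m(x)V(x)|u(x)|^2
\;\le\; \varepsilon^2\|u\|_E^2 .
\]
Next I would define $T_{\mathcal K}\colon E\to \ell^2_m$ by $T_{\mathcal K}u=\chi_{\mathcal K}u$ and factor it as $E\xrightarrow{\iota_\infty}\ell^\infty\xrightarrow{r_{\mathcal K}}\ell^2_m$, where $\iota_\infty$ is the continuous embedding furnished by canonical compactifiability (see the remark following Definition~\ref{cancompgraph}) and $r_{\mathcal K}$ is the restriction operator of Lemma~\ref{lem:l2imbedding}, which is compact precisely because $m(\mathcal K)<\infty$. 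Hence $T_{\mathcal K}$ is compact, and the tail estimate gives $\|(\iota-T_{\mathcal K})u\|_{\ell^2_m}\le \varepsilon\|u\|_E$, i.e. $\|\iota-T_{\mathcal K}\|\le\varepsilon$ in operator norm. Since $\varepsilon>0$ was arbitrary, $\iota$ is a uniform limit of compact operators and is therefore compact.

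For the spectral consequence I would use the standard equivalence between compact form embeddings and compact resolvents. Since $L\ge I$, the form identification gives $E=D(q)=D(L^{1/2})$ with $\|u\|_E^2=q(u)=\|L^{1/2}u\|_{\ell^2_m}^2$, so that $L^{-1/2}\colon \ell^2_m\to E$ is a bounded isomorphism. Composing with the just-established compact embedding shows that $L^{-1/2}=\iota\circ L^{-1/2}$ is compact as an operator on $\ell^2_m$, whence $L^{-1}=L^{-1/2}L^{-1/2}$ is compact. Thus $L$ has compact resolvent and therefore purely discrete spectrum.

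The main obstacle is confined to the first step: one must ensure that the subset $\mathcal K$ extracted from the supremum in Assumption~\ref{ass:pot1} is simultaneously of finite measure \emph{and} canonically compactifiable, so that both the factorization through $\ell^\infty$ and the applicability of Lemma~\ref{lem:l2imbedding} are legitimate; this is exactly the class of sets over which the supremum in \eqref{eq:infv} is taken. Once this is secured, the remainder is a routine operator-norm approximation together with the classical ``compact form embedding implies compact resolvent'' argument.
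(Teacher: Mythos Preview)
Your proof is correct and rests on the same two ingredients as the paper's: the tail control coming from Assumption~\ref{ass:pot1} and the compactness of the truncation $u\mapsto\chi_{\mathcal K}u$ obtained from Lemma~\ref{lem:l2imbedding}. The difference is purely in the packaging. The paper argues sequentially: it builds an exhausting family $\mathcal K_t\Subset\mathcal V$ with $m(\mathcal K_t)<\infty$ and $\inf_{\mathcal V\setminus\mathcal K_t}V\to\infty$, takes a bounded sequence in $E$, extracts a weak limit, and then runs a diagonal-subsequence argument to upgrade weak to strong convergence in $\ell^2_m$. You instead show directly that the embedding $\iota$ is an operator-norm limit of the compact truncations $T_{\mathcal K}$, which is shorter and bypasses the bookkeeping of nested subsequences entirely. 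Your treatment of the spectral consequence via $L^{-1/2}$ is also more explicit than the paper's one-line appeal to ``standard results''.

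One small refinement worth making: in the body of your argument you factor $T_{\mathcal K}$ through the \emph{global} embedding $\iota_\infty\colon E\to\ell^\infty$ coming from canonical compactifiability of the whole graph, but Proposition~\ref{lem:sobolev2} as stated in the paper is meant to hold under Assumption~\ref{ass:pot1} alone (cf.\ Remark~\ref{lem:sobolev}, and note that case~(ii) of the paper's proof does not use global canonical compactifiability). The fix is already contained in your closing paragraph: since the set $\mathcal K$ produced by Assumption~\ref{ass:pot1} satisfies $\mathcal K\Subset\mathcal V$, Definition~\ref{cancompsubset} gives continuity of $u\mapsto\chi_{\mathcal K}u$ from $D(q^{(N)})\supset E$ into $\ell^\infty$ directly, and then Lemma~\ref{lem:l2imbedding} applies because $m(\mathcal K)<\infty$. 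With that adjustment your argument covers exactly the paper's statement and is cleaner than the original.
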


\begin{proof}
Let us prove it in the same way. 
\begin{itemize}
		\item[$(i)$] If $m(\mathcal{V})<\infty$ and $\Gamma$ is canonically compactifiable, then due to canonical compactifiability and Lemma~\ref{lem:l2imbedding}, the imbedding from $E \hookrightarrow \ell^2_m$ is compact as a composition of a continuous and compact imbedding. Thus, $L$ has compact resolvent and by standard results, $L$ has pure discrete spectrum. 
		
		\item[$(ii)$] Otherwise, consider an exhausting sequence of canonically compactifiable subsets $\mathcal{K}_t\Subset \mathcal{V}$ with $m(\mathcal K_t) < \infty$ satisfying 
  $$\mathcal{K}_t\subset \mathcal{K}_{t+1} \text{\;\;and\;\;} \inf\limits_{x\in \mathcal{V}\setminus\mathcal{K}_t}V(x)\to \infty \text{\;as\;}t\to \infty,$$ which exists due to Assumption~\ref{ass:pot1}. Since $\mathcal V$ is countable, we can introduce a list of vertices 
  \begin{displaymath}
  \mathcal V=\{ v_1, v_2, v_3, \ldots\}.    
  \end{displaymath}
  Then in order to guarantee $\bigcup\limits_{t\in \mathbb{N}} \mathcal{K}_t=\mathcal{V}$, we can add $\{v_1,v_1,\ldots,v_t\} $ to $\mathcal{K}_t$ which does not change the conditions
  $$m(\mathcal{K}_t)<\infty  \text{\;\;and\;\;} \inf\limits_{x\in \mathcal{V}\setminus\mathcal{K}_t}V(x)\to \infty \text{\;as\;}t\to \infty.$$ We want to show that if $\{u_n\}$ is a bounded sequence of functions in $E$, then there exists a convergent subsequence in $\ell^2_m$. 
        
        %Cauchy subsequence $\{u_{n_k}\}\subset\{u_n\}$ in $\ell^2_m$. Since $\ell^2_m$ is complete, we need to find a function $u\in \ell^2_m$ such that $\|u_{n_k}-u\|_{\ell^2_m}\to 0$ as $n_k\to \infty$.
        %under the Assumption~\ref{Sobolev_%Assumption}, we will prove that  %$E \subset \ell^2_m$ is compact.
        %\\

Let us take a bounded sequence $\{u_n\}$ in $E$ such that for some $c>0$ we have
 $$\|u_n\|_{\ell^2_m}^2\leq \| u_n \|_E^2 \leq c,\;\;\forall n\in \mathbb{N}.$$

Let $\mathcal K$ be a canonically compactifiable subset with $m(\mathcal K) < \infty$ and
\begin{equation}
(r_{\mathcal K} f)(x) = \begin{cases} f(x), &\quad x\in \mathcal K\\ 0, &\quad x\not \in \mathcal K. \end{cases}
\end{equation}
 Then $r_{\mathcal K}:E\to \ell^2_m$ is compact by Lemma~\ref{lem:l2imbedding} and that there exists $u\in E$ such that
\begin{align*}
u_n &\rightharpoonup u \qquad\text{in } E \\
r_{\mathcal K} u_n &\to r_{\mathcal K} u \qquad \text{in } \ell^2_m 
\end{align*}
 
 % \begin{align*}
	% 		\tilde{c}\;|u_n(x_0)|^2 &\leq |u_n(x_0))|^2(c(x_0)+m(x_0))\\ &\leq   \sum\limits_{x\in \mathcal{V}\setminus \mathcal{K}}m(x)V(x)u_n^2(x)+\sum\limits_{x\in \mathcal{V}\setminus \mathcal{K}}m(x)u_n^2(x)\\
	% 	&\leq q(u_n)+\|u_n\|_{\ell^2_m}^2\leq c\;,\text{\; for all\;} n \in \mathbb N .	
	% 	\end{align*}  This implies that 
 %        $$|u_n(x_0)|^2 \leq \dfrac{c}{\tilde{c}}\;,\;\; \forall n\in \mathbb N.$$ 
 %  From this, one can say that, for a fixed $x_0\in \mathcal{V}\setminus\mathcal{K}$\footnote{\MH{Here we need to work out how to use \eqref{eq:infv}}}, $\{u_n(x_0)\}_{n\in \mathbb N}$ is a bounded sequence in $\mathbb R$. Without loss of generality, we can assume that    $\{u_n(x_0)\}_{n\in \mathbb N}$ has infinitely many elements. From Bolzano-Weierstrass, there exists a subsequence $\{u_{n_k}(x_0)\}\subset\{u_n(x_0)\}$ which is Cauchy in $\mathbb R$. Let us denote $$y_{x_0}:=\lim\limits_{n_k\to \infty}u_{n_k}(x_0)$$for each  $x_0\in \mathcal{V}\setminus\mathcal{K}$. In this way, we can construct a function $u:\mathcal{V} \to \mathbb R$ defined by
 %  $$u(x):=y_x=\lim\limits_{n_k\to \infty}u_{n_k}(x),$$for all $x\in\mathcal{V}\setminus\mathcal{K}$. Thus, we know $u_{n_k}, u \subset \ell^2_m$ such that $\|u_{n_k}\|^2_{\ell^2_m}=1$ and $u_{n_k}\rightharpoonup u$ in $\ell^2_m(\mathcal{V}\setminus\mathcal{K})$.\\
  %and $u_{n_k} \to u$ in $\ell^2_{loc,m}$.\\
  To show that $u_{n_k}\to u$ in $\ell^2_m$ passing to a subsequence, let us take such a sequence $\{\mathcal{K}_t\}$ of canonically compactifiable subsets satisfying 
$$\chi_{\mathcal{K}_t} u_{n_k}\to \chi_{\mathcal{K}_t} u \text{\; in } \ell^2_m \text{\;as\;}t\to \infty,$$ or, there exists a subsequence $(u_{n_{k,t}})$ such that
$$\|\chi_{\mathcal{K}_t} u_{n_{k,t}}-\chi_{\mathcal{K}_t} u\|_{\ell^2_m}\leq \dfrac{1}{t},$$
that is, $\chi_{\mathcal{K}_t} u_{n_{k,t}}\to \chi_{\mathcal{K}_t} u$ in 
 $\ell^2_m \text{\;as\;}t\to \infty$. Then, 
 \begin{align*}
			c\geq \|u_{n_{k,t}}\|^2_E &\geq \sum\limits_{x\in \mathcal{V}} (c(x)+m(x))|u_{n_{k,t}}(x)|^2\\ &\geq   \sum\limits_{x\in \mathcal{V}\setminus \mathcal{K}_t}(c(x)+m(x))|u_{n_{k,t}}(x)|^2\\ &\geq \inf\limits_{x\in \mathcal{V}\setminus\mathcal{K}_t}V(x) \sum\limits_{x\in \mathcal{V}\setminus \mathcal{K}_t}m(x)|u_{n_{k,t}}(x)|^2,
		\end{align*} 
        which implies 
        $$\dfrac{c}{\inf\limits_{x\in \mathcal{V}\setminus\mathcal{K}_t}V(x)}\geq \sum\limits_{x\in \mathcal{V}\setminus \mathcal{K}_t}m(x)|u_{n_{k,t}}(x)|^2= 1-\|\chi_{\mathcal{K}_t} u_{n_{k,t}}\|_{\ell^2_m}^2,$$ and we get
        $$\lim\limits_{t\to \infty}(1-\|\chi_{\mathcal{K}_t} u_{n_{k,t}}\|_{\ell^2_m}^2)\leq \lim\limits_{t\to \infty} \dfrac{c}{\inf\limits_{x\in \mathcal{V}\setminus\mathcal{K}_t}V(x)}=0.$$
By monotone convergence,
\begin{equation*}
    \|u\|_{\ell^2_m} = \lim_{t\to \infty} \|\chi_{\mathcal K_t} u\|_{\ell^2_m}, 
\end{equation*}
and we conclude
$$1-\|u\|_{\ell^2_m}^2=0, \text{\;thus,\;}\|u\|_{\ell^2_m}^2=1,$$ hence,  $u_n \to u$ in $\ell^2_m$. 
\end{itemize}        
%By \eqref{Sobolev_Assumption}, there %exists $\tilde{C}>0$ such that 
%\begin{equation}\label{outside2}
 %   \inf_{x\in \mathcal{V}\setminus K} (c(x) + m(x))\geq \tilde{C},
%\end{equation}
%for at least one cononically %compactifiable subset $K\Subset %\mathcal{V}$.
%If we take any $K\Subset \mathcal{V}$, from the definition of a canonically compactifiable subset, we know that $m(\mathcal K) <\infty$ and using Lemma~\ref{lem:sobolev}, we have a compact imbedding $
%D(q_{\mathcal K}) \subset \ell^2_m(\mathcal{K}).$
%Thus, there exists a positive constant $C$ such that $$\|f\|_{\mathcal{K}}|^2_{l^2_m}\leq \dfrac{1}{C %}\left(q(f|_{\mathcal{K}})+\|f|_{\mathcal{K}}\|^2_{\ell^2_m}\right).$$ %Using this, %and \eqref{outside2},
%we get
%\begin{eqnarray}
 % q(f)+\|f\|^2_{\ell^2_m}  &  = q(f|_{\mathcal{K}})+\|f|_{\mathcal{K}}\|^2_{\ell^2_m} + q(f|_{\mathcal{V}\setminus \mathcal{K}})+\|f|_{\mathcal{V}\setminus \mathcal{K}}\|^2_{\ell^2_m}\nonumber \\ & \geq C\|f|_{\mathcal{K}}\|^2_{\ell^2_m}+\sum\limits _{x\in \mathcal{V}\setminus \mathcal K}(c(x)+m(x))|f(x)|^2\nonumber \\ &\geq   C\sum\limits _{x\in \mathcal K}m(x)|f(x)|^2+\sum\limits _{x\in \mathcal{V}\setminus \mathcal{K}}m(x)|f(x)|^2\nonumber \\ &\geq   C^{'}\left(\sum\limits _{x\in \mathcal K}m(x)|f(x)|^2+\sum\limits _{x\in \mathcal{V}\setminus \mathcal{K}}m(x)|f(x)|^2\right)\nonumber
%\end{eqnarray}
%where we can choose $C^{'}=\min\{C,1\}$. %This implies continuous imbedding $E %\subset \ell^2_m$, that is, there exists %$C^{'}>0$ such that 
%$$\|f\|^2_{l^2_m}\leq \dfrac{1}{C^{'} }\left(q(f)+\|f\|^2_{\ell^2_m}\right).$$ 

 \end{proof}

 \begin{remark}\label{lem:sobolev}
    If $\Gamma=(\mathcal V, b, c)$ is canonically compactifiable and $m(\mathcal V) < \infty$, then Assumption~\ref{ass:pot1} becomes obsolete and Proposition~\ref{lem:sobolev2} remains true.
\end{remark}

\begin{figure}[ht]
    \centering
    \includegraphics{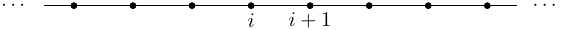}
    \caption{Visualization of the line graph in \autoref{ex:bettercondition}}
\end{figure}
 \begin{example}\label{ex:bettercondition} 
 Let us consider our result on the compact imbedding $E\hookrightarrow \ell^2_m$ with known results when $|V(x)|\to \infty$ in the following sense:
     \begin{equation}\label{eq:vinfty}
        \sup\limits_{\mathcal{K}\subset \mathcal V \,\text{finite}}\inf\limits_{x\in \mathcal{V}\setminus\mathcal{K}}V(x)= \infty,
    \end{equation}
    then the potential satisfies Assumption~\ref{ass:pot1} 
    and it is in fact known that such a condition implies discreteness of the spectrum of $L$. In this way, our results can be seen as a natural extension to this setting (see e.g. \cite[Corollary 4.19]{KeLeWo}).

    However, we can construct an example for which Assumption~\ref{ass:pot1} holds, but \eqref{eq:vinfty} is not satisfied. Suppose we have a line graph $\Gamma$ such that  
    \begin{equation*}
        m(i) = \begin{cases} \frac{1}{i+1}, &\quad i \ge 0 \\ 
        \frac{1}{i^2}, &\quad i <0.
        \end{cases}
    \end{equation*}
    \begin{equation*}
       b(i, i+1) = \begin{cases} i+1, &\quad i\ge 0 \\ i^2, &\quad i < 0.\end{cases}
    \end{equation*}
    with potential 
    \begin{equation*}
    V(i) = \begin{cases} i, &\quad i \ge 0 \\ 0, &\quad i < 0.\end{cases}
    \end{equation*}
     Then
    \begin{align*}
        \sup\limits_{\mathcal{K}\subset \mathcal V\, \text{finite}}\inf\limits_{x\in \mathcal{V}\setminus\mathcal{K}}V(x)= 0
    \end{align*}
    and \eqref{eq:vinfty} is not satisfied.
    %since $-\mathbb N$ is a canonically compactifiable %subset of $\Gamma=\mathbb Z$.
    Note that the graph $\Gamma=(\mathbb Z, b,c)$ does not have finite measure and does not satisfy $\diam_d(\Gamma) < \infty$. In particular the conditions in Proposition~\ref{connected} are not satisfied and we need to verify Assumption~\ref{ass:pot1} (see Remark~\ref{lem:sobolev}). Let us show that $\Gamma$ is still canonically compactifiable and $L$ has discrete spectrum.
    
    One easily verifies
    \begin{equation*}
        m(-\mathbb N) = \sum_{z<0} \frac{1}{z^2} < \infty
    \end{equation*}
    and
    \begin{equation*}
        \operatorname{diam}(-\mathbb N) = \sum_{z< 0} \frac{1}{b(z, z+1)} = \sum_{z<0} \frac1{z^2} < \infty.
    \end{equation*}
    In particular, $\mathbb K_n = -\mathbb N \cup \{0,1,\ldots, n\}$ is a canonically compactifiable subset and we have
    \begin{align*}
        \inf_{x\in \mathbb Z \setminus \mathcal K_n} V(x) &= n+1\\
        \inf_{x\in \mathbb Z \setminus \mathcal K_n} c(x) &= \frac{n+1}{n+2}.
    \end{align*}
    Thus,
    \begin{align*}
        \sup_{\mathcal K \Subset \mathbb Z} \inf_{x\in \mathbb Z \setminus \mathcal K} V(x) &\ge \lim_{n\to \infty}  \inf_{x\in \mathbb Z \setminus \mathcal K_n} V(x)= \infty\\
         \sup_{\mathcal K \Subset \mathbb Z} \inf_{x\in \mathbb Z \setminus \mathcal K} c(x) &\ge \lim_{n\to \infty}\inf_{x\in \mathbb Z \setminus \mathcal K_n} c(x) = 1>0.
    \end{align*}
    In particular, by Proposition~\ref{lem:sobolev1} we have $D(Q^{(N)}) \hookrightarrow  \ell^\infty(\mathbb Z)$ and by Lemma~\ref{lem:sobolev2} we have compact imbedding $E \hookrightarrow\ell^2_m(\mathbb Z)$. This verifies that $\Gamma$ is canonically compactifiable and $L$ has discrete spectrum.
 \end{example}
% Now let  $K$ be a subset of $\mathcal V$ and $q_K$ be the restriction of the form $q(u)$, given in \eqref{form}, on $K$. That is;
% \begin{equation}\label{form_K}
% q_K(u)=\dfrac{1}{2}\sum\limits_{x,y\in K, x \sim y}b(x,y)(u(x)-u(y))^2+\sum\limits_{x\in K}c(x)u^2(x),
% \end{equation}
% then in accordance with the Definition 4.1 of \cite{ke-le 2}, we introduce the following definition:
% \begin{definition}
%  $K\subset \mathcal V$ is canonically compactifiable if   
%   $D(q_K)\subset \ell^{\infty}_m(K)$.  
% \end{definition}

\section{Proof of Proposition~\ref{prop_for lemma}}
To prove Proposition~\ref{prop_for lemma}, we need to investigate properties of the nonlinearity. One can see that similar properties hold in the continuous case as well (see \cite[Lemma~3.1]{akd-pank}).\\

 For this purpose we define inspired by \cite{d-m}, ${f}_{R}$ and ${F}_{R}$ via\\

%Let us denote the open ball with radius $R$ with
%\begin{displaymath}
%B_R:= \{ u\in E: \|u\|_E \le R\}.
%\end{displaymath}
%By the continuity of the imbedding $E\subset \ell^{\infty}$, $\exists R^{'}=R^{'}(R)>0$ such that \linebreak $\|u\|_{\ell^{\infty}}\leq R^{'}$, $\forall u\in B_R$. Let us define $ {\tilde f}_{R}$ and  ${\tilde F}_{R}$ on $\mathcal{V}\times \mathbb{R}$ as follows
\begin{equation}\label{f_tilde}
    {f}_{R}(x,s)=
\left\{ \begin{array}{cc}
& 
\begin{array}{cc}
\;f(x,s), & |s|< R \\
\dfrac{f(x,R)}{R}s, & s\geq R \\
\dfrac{f(x,-R)}{-R}s, & s\leq -R
\end{array}
\end{array}\right.
\end{equation} and 
\begin{equation}\label{F_tilde2}
{F}_{R}(x,u)=\int_0^{u} {f}_{R}(x,s) ds\,,\end{equation}
respectively.

\begin{lemma}\label{lem:lipschitzr}
    Suppose $u\in \ell^2_m$, then $f_R(\cdot, u(\cdot))\in \ell^2_m$ for all $R>0$ and $F_R(\cdot, u(\cdot)) \in \ell^1_m$.
\end{lemma}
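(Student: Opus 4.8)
The plan is to reduce everything to a single uniform pointwise bound on the truncated nonlinearity, namely the claim that
\begin{equation*}
|f_R(x,s)| \le \mu(R)\,|s| \qquad \text{for all } s\in\mathbb R,\ x\in\mathcal V.
\end{equation*}
Once this is in hand, both assertions follow by summation against the measure $m$, so the real content is establishing this estimate by inspecting the three regimes in the definition \eqref{f_tilde}. First I would treat the inner region $|s|<R$: here $f_R(x,s)=f(x,s)$, and since $|s|<R$ assumption $(f_2)$ together with the monotonicity of $\mu$ gives directly $|f(x,s)|\le\mu(R)|s|$. Next, in the region $s\ge R$ the value is $f_R(x,s)=\tfrac{f(x,R)}{R}s$; applying $(f_2)$ at the point $R$ (where $|R|=R$) yields $|f(x,R)|\le\mu(R)R$, hence the slope satisfies $\tfrac{|f(x,R)|}{R}\le\mu(R)$ and therefore $|f_R(x,s)|\le\mu(R)|s|$. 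The region $s\le -R$ is symmetric, using $(f_2)$ at $-R$. Combining the three cases gives the uniform linear bound.

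With the bound secured, the $\ell^2_m$-membership is immediate: I would estimate
\begin{equation*}
\sum_{x\in\mathcal V} m(x)\,|f_R(x,u(x))|^2 \le \mu(R)^2 \sum_{x\in\mathcal V} m(x)\,|u(x)|^2 = \mu(R)^2\,\|u\|_{\ell^2_m}^2 <\infty,
\end{equation*}
which shows $f_R(\cdot,u(\cdot))\in\ell^2_m$ for every $R>0$, the finiteness coming precisely from the hypothesis $u\in\ell^2_m$.

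For the statement about $F_R$, I would first integrate the pointwise bound. Since $F_R(x,u)=\int_0^u f_R(x,s)\,ds$, the linear estimate on $f_R$ gives $|F_R(x,u)|\le\int_0^{|u|}\mu(R)\,\sigma\,d\sigma=\tfrac{\mu(R)}{2}|u|^2$, valid for both signs of $u$ after the obvious change of variables. Summing then yields
\begin{equation*}
\sum_{x\in\mathcal V} m(x)\,|F_R(x,u(x))| \le \frac{\mu(R)}{2}\sum_{x\in\mathcal V} m(x)\,|u(x)|^2 = \frac{\mu(R)}{2}\,\|u\|_{\ell^2_m}^2 <\infty,
\end{equation*}
so $F_R(\cdot,u(\cdot))\in\ell^1_m$.

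I do not expect a genuine obstacle here; the only point requiring care is the outer (linear) regime, where one must recognize that the truncation was deliberately designed with slope $f(x,R)/R$ precisely so that the bound $(f_2)$ at the endpoint propagates to all $|s|\ge R$. The rest is routine summation, and the whole argument is uniform in $x$, so no additional structure of the graph or the measure is needed beyond $u\in\ell^2_m$.
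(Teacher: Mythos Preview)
Your proof is correct and follows essentially the same approach as the paper: you establish the uniform bound $|f_R(x,s)|\le\mu(R)|s|$ from $(f_2)$ and the definition of $f_R$, integrate it to get $|F_R(x,s)|\le\tfrac{\mu(R)}{2}|s|^2$, and then sum against $m$. The only difference is that you spell out the three cases in the piecewise definition of $f_R$ explicitly, which the paper leaves implicit.
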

\begin{proof}
With $(f_2)$ we have 
\begin{gather*}
|f_R(\cdot\;, s)| \le \mu(R) |s|,\qquad 
|{F}_{R}(x,s)|\leq \frac{\mu(R)}{2} |s|^2,
\end{gather*}
and we conclude
\begin{gather*}
\|f_R(\cdot\;, u(\cdot)\|_{\ell^2_m} \le \mu(R) \|u\|_{\ell^2_m}, \qquad
\| F_R(\cdot\; u(\cdot))\|_{\ell^1_m} \le \frac{\mu(R)}{2} \|u\|_{\ell^2_m}^2.
\end{gather*}
\end{proof}

Let us denote the open ball with radius $R$ with
\begin{equation}\label{eq:aball}
B_R:= \{ u\in E: \|u\|_E \le R\}.
\end{equation}

\begin{lemma}\label{lem:lipschitzres}
	Under the assumptions $(f_1)$ and $(f_2)$, for every $R>0$, there exists $R'>0$ such that  $\|u\|_{\infty}\leq R'$ for each $u\in B_{R}\subset E$ and $$f(x,u(x))=f_{R'}(x,u(x)),\; F(x,u(x))=F_{R'}(x,u(x)), \; \forall x\in \mathcal{V},$$ where $f(.\;,u(.))\in \ell^2_m$ and $F(.\;,u(.))\in \ell^1_m$.
\end{lemma}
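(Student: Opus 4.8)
The plan is to reduce everything to the continuous imbedding $E \hookrightarrow \ell^\infty$, which is available because $\Gamma$ is assumed canonically compactifiable throughout (the imbedding $D(q)=E \hookrightarrow \ell^\infty$ was recorded in the remark following Definition~\ref{cancompgraph}, and sufficient geometric conditions for it appear in Proposition~\ref{lem:sobolev1}). First I would fix the imbedding constant $C>0$ with $\|u\|_\infty \le C \|u\|_E$ for all $u\in E$. Then for any $u$ in the ball $B_R$ from \eqref{eq:aball} we immediately get $\|u\|_\infty \le C\|u\|_E \le CR$, so setting $R':= CR$ yields a uniform bound $\|u\|_\infty \le R'$ depending only on $R$ (and $C$), not on the particular $u$. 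This uniform-in-$u$ control is really the only substantive point of the lemma; everything else is bookkeeping about the truncation.

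Next I would use this pointwise bound to identify $f$ with its truncation $f_{R'}$ along $u$. Since $\|u\|_\infty \le R'$, every value $s=u(x)$ satisfies $|s|\le R'$, and on the range $\{|s|\le R'\}$ the definition \eqref{f_tilde} gives $f_{R'}(x,s)=f(x,s)$; note that at the endpoints $s=\pm R'$ the truncated formula $\tfrac{f(x,\pm R')}{\pm R'}\,s$ also returns $f(x,\pm R')$, so the two functions agree on the closed interval, not merely its interior. Hence $f(x,u(x))=f_{R'}(x,u(x))$ for every $x\in\mathcal V$. For $F$ I would argue by the same reasoning at the level of the integrand: in \eqref{F_tilde2} the integration variable runs over $[0,u(x)]\subset[-R',R']$, a region on which $f=f_{R'}$, so the primitives coincide and $F(x,u(x))=F_{R'}(x,u(x))$ for all $x$.

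Finally I would upgrade these pointwise identities to the claimed integrability statements. Because $\|u\|_E^2=q(u)\ge \|u\|_{\ell^2_m}^2$ (using $V\ge 1$ and nonnegativity of the edge term), we have $B_R\subset E\hookrightarrow \ell^2_m$, so in particular $u\in\ell^2_m$. Applying Lemma~\ref{lem:lipschitzr} with the parameter $R'$ gives $f_{R'}(\cdot,u(\cdot))\in\ell^2_m$ and $F_{R'}(\cdot,u(\cdot))\in\ell^1_m$; combined with the identities just established, this yields $f(\cdot,u(\cdot))\in\ell^2_m$ and $F(\cdot,u(\cdot))\in\ell^1_m$, as desired. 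I do not anticipate a genuine obstacle here: the argument is essentially a packaging of the $\ell^\infty$-bound together with Lemma~\ref{lem:lipschitzr}, and the only care needed is the boundary case $|u(x)|=R'$, which is handled automatically by the continuity built into the truncation \eqref{f_tilde}.
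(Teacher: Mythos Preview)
Your proof is correct and follows essentially the same approach as the paper: use the continuous imbedding $E\hookrightarrow \ell^\infty$ to obtain a uniform bound $R'$ on $\|u\|_\infty$ for $u\in B_R$, read off the identities $f=f_{R'}$ and $F=F_{R'}$ along $u$ from the definition of the truncation, and then conclude the integrability of $f(\cdot,u(\cdot))$ and $F(\cdot,u(\cdot))$. The only cosmetic difference is that you invoke Lemma~\ref{lem:lipschitzr} for the integrability step, whereas the paper repeats the underlying $(f_2)$-estimate directly; your packaging is slightly cleaner but not a different argument.
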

\begin{proof}
 First part of the proof follows from \eqref{f_tilde} and \eqref{F_tilde2}. For the remaining part of the proof, by the continuity of the imbedding $E\hookrightarrow \ell^{\infty}$, if $u \in E$ with $\|u\|_{E} < R$  then there is $R{'}$ such that $\|u\|_{\ell^{\infty}} < R{'}$ and together with  $(f_2)$, we have 
\begin{eqnarray}
\|f(.\;,u(.)\|^2_{\ell^2_m}&=&\sum_{x\in \mathcal{V}}m(x)|f(x,u(x))|^2 \nonumber \\
&\leq&\sum_{x\in \mathcal{V}}m(x)C^2_{R{'}}|u(x)|^2=C^2_{R{'}}\|u\|^2_{\ell^2_m}<\infty ,\nonumber
\end{eqnarray}
whenever $u\in B_R$, where $C_{R{'}}=\max\{\mu(R{'}),|f(x,R^{'})|,|f(x,-R^{'})|\}$. Also,
\begin{eqnarray}
\|f(.\;,u(.)\|_{\ell^1_m}=\sum_{x\in \mathcal{V}}m(x)|f(x,u(x))|&\leq&\sum_{x\in \mathcal{V}}m(x)\int\limits_0^{u(x)}|f(x,s)|ds\nonumber \\&\leq&\sum_{x\in \mathcal{V}}m(x)|u(x)|\;\mu(R^{'})\;|u(x)|\nonumber\\&=&\mu(R^{'})\|u\|^2_{\ell^2_m}<\infty,\nonumber
\end{eqnarray}
which completes the proof.
\end{proof}
\begin{lemma}
Let $R>0$ and $\Psi_R: \ell^2_m \to \mathbb{R}$ be defined by
\begin{equation*}
	\Psi_{R}(u)=\sum\limits_{x\in\mathcal{V}}m(x)F_{R}(x,u(x))\,,\quad u \neq 0.
\end{equation*}
	If $(f_1)$ and $(f_2)$ are satisfied, then the functional $\Psi_R(u)$  is $C^1$ and
\begin{equation}\label{e3.7}
\nabla \Psi_R(u)h=\sum\limits_{x\in\mathcal{V}}m(x) f_R(x,u(x))h(x) \,,\quad \forall h \in E\,.
\end{equation}
\end{lemma}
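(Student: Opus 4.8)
The plan is to follow the classical two-step strategy for superposition functionals: first establish that $\Psi_R$ is Gâteaux differentiable with the claimed derivative, and then upgrade this to continuous (Fréchet) differentiability by proving that the associated Nemytskii operator $u \mapsto f_R(\cdot, u(\cdot))$ is continuous on $\ell^2_m$. Throughout I use that every $h \in E$ lies in $\ell^2_m$ with $\|h\|_{\ell^2_m} \le \|h\|_E$, since $q_0(u) \ge \|u\|_{\ell^2_m}^2$ provides the continuous imbedding $E \hookrightarrow \ell^2_m$.

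\textbf{Step 1 (Gâteaux derivative).} Fix $u \in \ell^2_m$ and $h \in E$. For $t \neq 0$ I write the difference quotient as
\begin{equation*}
\frac{\Psi_R(u+th)-\Psi_R(u)}{t} = \sum_{x\in \mathcal V} m(x)\, \frac{F_R(x,u(x)+th(x)) - F_R(x,u(x))}{t}.
\end{equation*}
Since $F_R(x,\cdot)$ is $C^1$ with derivative $f_R(x,\cdot)$, which is continuous in $s$ by $(f_1)$ together with the matching of the truncation at $s=\pm R$, each summand converges pointwise to $f_R(x,u(x))h(x)$ as $t\to 0$. To interchange limit and summation I invoke dominated convergence: by the mean value theorem and the bound $|f_R(x,s)|\le \mu(R)|s|$ from $(f_2)$, for $|t|\le 1$ the $x$-th summand is dominated by $\mu(R)\, m(x)\,(|u(x)|+|h(x)|)\,|h(x)|$, which is summable by Cauchy--Schwarz since $u,h\in \ell^2_m$. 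This yields the stated formula, and by Lemma~\ref{lem:lipschitzr} we have $f_R(\cdot,u(\cdot))\in \ell^2_m$, so $\nabla\Psi_R(u)$ is a bounded linear functional on $\ell^2_m$, and hence on $E$.

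\textbf{Step 2 (continuity of the derivative).} The crux is to show that $u\mapsto f_R(\cdot,u(\cdot))$ is continuous from $\ell^2_m$ to $\ell^2_m$. Suppose $u_n\to u$ in $\ell^2_m$; arguing by contradiction, if the images failed to converge there would be a subsequence bounded away from $f_R(\cdot,u(\cdot))$. Passing to a rapidly converging sub-subsequence with $\sum_n \|u_{n+1}-u_n\|_{\ell^2_m}<\infty$, the function $g(x):=|u_1(x)|+\sum_n |u_{n+1}(x)-u_n(x)|$ lies in $\ell^2_m$ by Minkowski's inequality and satisfies $|u_n(x)|\le g(x)$ for all $n$, while $u_n(x)\to u(x)$ pointwise. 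Using the continuity of $f_R(x,\cdot)$ from $(f_1)$ together with the dominating function $\mu(R)^2 m(x)(g(x)+|u(x)|)^2\in \ell^1_m$, dominated convergence gives $\|f_R(\cdot,u_n(\cdot))-f_R(\cdot,u(\cdot))\|_{\ell^2_m}\to 0$, a contradiction. I expect this to be the main obstacle, the extraction of the $\ell^2_m$-dominating function along a subsequence being the essential device.

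\textbf{Step 3 (conclusion).} A functional that is Gâteaux differentiable with a derivative depending continuously on $u$ is Fréchet differentiable and of class $C^1$; applying this standard fact to $\Psi_R$ under the identification $(\ell^2_m)^*\cong \ell^2_m$ gives that $\Psi_R$ is $C^1$. Finally, restricting $\nabla\Psi_R(u)$ to $E$ through the continuous imbedding $E\hookrightarrow \ell^2_m$ produces the formula \eqref{e3.7} for all $h\in E$, completing the proof.
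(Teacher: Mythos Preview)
Your proof is correct, but it is organized differently from the paper's. The paper attacks Fr\'echet differentiability head-on: it writes the remainder
\[
\Psi_R(u+h)-\Psi_R(u)-Ah=\sum_{x\in\mathcal V}m(x)\Bigl(\int_0^1\bigl(f_R(x,u(x)+th(x))-f_R(x,u(x))\bigr)\,dt\Bigr)h(x)
\]
via the fundamental theorem of calculus, then uses the pointwise bound $|f_R(x,s)|\le\mu(R)|s|$ and dominated convergence to push the limit $h\to 0$ through the sum; the continuity of the derivative is essentially absorbed into the same estimate. Your route is the classical modular one: first a one-parameter Gâteaux computation, then continuity of the Nemytskii map $u\mapsto f_R(\cdot,u(\cdot))$ on $\ell^2_m$ via a rapidly convergent subsequence and an $\ell^2_m$ majorant, and finally the standard lemma upgrading ``G\^ateaux with continuous derivative'' to $C^1$. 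Both arguments rest on the same two ingredients (the linear bound from $(f_2)$ and dominated convergence), but your decomposition makes the $C^1$-continuity explicit and isolates the Nemytskii continuity as a reusable sublemma, whereas the paper's direct estimate is shorter but leaves the continuity of $u\mapsto\nabla\Psi_R(u)$ more implicit.
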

\begin{proof}
    For an arbitrary $u\in \ell^2_m$, if we take a bounded linear functional $A:=\nabla \Psi_{R}(u)$ defined as 
$$Ah=\sum\limits_{x\in\mathcal{V}}m(x)f_{R}(x,u(x))h(x) \,,\quad \forall h \,$$ then one can obtain that
$$\lim\limits_{h\to 0}\dfrac{|\Psi_{R^{'}}(u+h)-\Psi_{R^{'}}(u)-Ah|}{\|h\|_{\ell^2_m}}=0.$$  Then $f(x,u)=\dfrac{\partial F}{\partial u}(x,u)$, and using $$\dfrac{d}{dt}F(x,u+th(x))=\dfrac{\partial F}{\partial u}(x,u+th(x)) h(x),$$ we obtain 
$$\int_0^1 \dfrac{\partial F}{\partial u}(x,u+th(x))h(x)dt=F(x,u+th(x))-F(x,u(x)),$$ and so
$$F(x,u+th(x))-F(x,u(x))=\int_0^1 f(x,u+th(x))dt.$$
Using the last equation above and the Mean Value Theorem, we get
\begin{eqnarray}
& &|\Psi_{R^{'}}(u+h)-\Psi_{R^{'}}(u)-Ah|\nonumber\\ \nonumber&=&|\sum\limits_{x\in \mathcal{V}}m(x)[\tilde F_{R^{'}}(x,(u+h)(x))-F_{R^{'}}(x,u(x))]-\sum\limits_{x\in \mathcal{V}}m(x)\tilde f_{R^{'}}(x,u(x))h(x)|\nonumber \\&=&|\sum\limits_{x\in \mathcal{V}}m(x)\left(
\int_0^1 (f(x,u(x)+th(x))-f(x,u(x)) )dt
\right)    h(x)|\nonumber\\ & \leq& \sum\limits_{x\in \mathcal{V}}m(x)\left|\left(
\int_0^1 (f(x,u(x)+th(x))-f(x,u(x)) )dt
\right) \right|   |h(x)|\nonumber\\&\leq& \sum\limits_{x\in \mathcal{V}}m(x)\left(
\int_0^1 \left |f(x,u(x)+th(x))-f(x,u(x)\right|^2dt
\right)^{1/2}  \|h\|^2_{\ell^2_m}.\nonumber
\end{eqnarray}
Then by $(f_2)$ we have 
\begin{eqnarray*}
    |f_R(x, u(x) + th(x)) - f_R(x, u(x))|^2 &\le& 2 |f_R(x, u(x) + th(x))|^2 + 2|f_R(x, u(x))|^2 \\
    &\le& 2\mu(R)^2 (|u(x) + th(x)|^2 + |u(x)|^2)
\end{eqnarray*}
and by dominated convergence we have %by Lebesgue Dominated Convergence Theorem and $(f_2)$  we obtain
\begin{multline}
\lim\limits_{h\to 0}\dfrac{|\Psi_{R^{'}}(u+h)-\Psi_{R^{'}}(u)-Ah|}{\|h\|_{\ell^2_m}}\nonumber\\ \leq\sum\limits_{x\in \mathcal{V}}m(x)\lim\limits_{h\to 0}  \left(\left(
\int_0^1 (f(x,u(x)+th(x))-f(x,u(x)) )dt
\right)^2\right)^{1/2} =0\nonumber,
\end{multline}
which implies that $\Psi_{R}$ in $C^1$ and
\begin{equation*}
\nabla \Psi_R(u)h=\sum\limits_{x\in\mathcal{V}}m(x) f_R(x,u(x))h(x) \,,\quad \forall h \in E\,.
\end{equation*}

\end{proof}

\begin{lemma}\label{l2.1} 
Let $\Psi:E \to \mathbb{R}$ be defined by
\begin{equation*}
	\Psi(u)=\sum\limits_{x\in\mathcal{V}}m(x) F(x,u(x))\,,\quad u \neq 0
\end{equation*}
	If $(f_1)$ and $(f_2)$ are satisfied, then the functional $\Psi(u)$  is of the class $C^1$ and  weakly continuous and
\begin{equation}
\nabla \Psi(u)h=\sum\limits_{x\in\mathcal{V}}m(x) f(x,u(x))h(x) \,,\quad \forall h \in E\,.
\end{equation}
Furthermore, $\nabla \Psi: E \to E'$ is completely continuous, i.e. every weakly compact subset is mapped to a compact subset.

\end{lemma}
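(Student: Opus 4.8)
The plan is to reduce everything to the truncated functionals $\Psi_R$, whose $C^1$ regularity on $\ell^2_m$ has already been established in the previous lemma, and then to exploit the two imbeddings available in our setting: the continuous imbedding $E\hookrightarrow \ell^\infty$ and the compact imbedding $E\hookrightarrow \ell^2_m$ from Proposition~\ref{lem:sobolev2}.

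First, for the $C^1$ claim and the derivative formula I would argue locally on balls. Fix $R>0$ and consider the ball $B_R$ from \eqref{eq:aball}. By Lemma~\ref{lem:lipschitzres} there is $R'>0$ with $\|u\|_\infty\le R'$ for all $u\in B_R$, and on the open ball $\{\|u\|_E<R\}$ one has $f(x,u(x))=f_{R'}(x,u(x))$ and $F(x,u(x))=F_{R'}(x,u(x))$ for every $x\in\mathcal V$. Consequently $\Psi$ coincides there with $\Psi_{R'}\circ\iota$, where $\iota:E\hookrightarrow \ell^2_m$ is the continuous inclusion. Since $\Psi_{R'}$ is $C^1$ on $\ell^2_m$ and $\iota$ is a bounded linear map, the composition is $C^1$ with $\nabla(\Psi_{R'}\circ\iota)(u)h=\sum_{x}m(x)f_{R'}(x,u(x))h(x)$. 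As $R$ is arbitrary and these balls exhaust $E$, and since being $C^1$ is a local property, this shows $\Psi\in C^1(E)$ and yields the stated formula for $\nabla\Psi$.

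Next, both the weak continuity of $\Psi$ and the complete continuity of $\nabla\Psi$ rest on the same observation: if $u_n\rightharpoonup u$ in $E$, then $\{u_n\}$ is bounded and, by compactness of $E\hookrightarrow \ell^2_m$, we have $u_n\to u$ strongly in $\ell^2_m$; moreover $\|u_n\|_\infty,\|u\|_\infty\le R'$ for a common $R'$ by $E\hookrightarrow \ell^\infty$. For weak continuity, on the relevant ball $\Psi=\Psi_{R'}\circ\iota$ and continuity of $\Psi_{R'}$ on $\ell^2_m$ gives $\Psi(u_n)=\Psi_{R'}(u_n)\to\Psi_{R'}(u)=\Psi(u)$. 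For the derivative, Cauchy--Schwarz together with $E\hookrightarrow \ell^2_m$ yields
\begin{equation*}
\|\nabla\Psi(u_n)-\nabla\Psi(u)\|_{E'}\le C\,\|f_{R'}(\cdot,u_n)-f_{R'}(\cdot,u)\|_{\ell^2_m},
\end{equation*}
so it remains to prove that the Nemytskii operator $v\mapsto f_{R'}(\cdot,v)$ is continuous from $\ell^2_m$ to $\ell^2_m$.

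This last continuity is the main obstacle, and I would handle it by a subsequence-plus-domination argument. Given $u_n\to u$ in $\ell^2_m$, pass to an arbitrary subsequence; by the standard $\ell^2$ extraction one finds a further subsequence converging pointwise on $\mathcal V$ and dominated by some $g\in\ell^2_m$. Along it, $f_{R'}(x,u_n(x))\to f_{R'}(x,u(x))$ for each $x$ by continuity of $f_{R'}(x,\cdot)$, while the bound $|f_{R'}(x,s)|\le\mu(R')|s|$ coming from $(f_2)$ provides the integrable majorant $2\mu(R')^2(g(x)^2+u(x)^2)\in\ell^1_m$. Dominated convergence then forces $f_{R'}(\cdot,u_n)\to f_{R'}(\cdot,u)$ in $\ell^2_m$ along this subsequence, and since the initial subsequence was arbitrary, the full sequence converges. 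This gives $\nabla\Psi(u_n)\to\nabla\Psi(u)$ in $E'$, which is exactly the complete continuity of $\nabla\Psi$, using that $E$ is a Hilbert space so that the sequential characterization of weakly compact sets suffices.
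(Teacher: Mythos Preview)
Your proof is correct and follows essentially the same strategy as the paper: reduce to the truncated functionals $\Psi_{R'}$ via the continuous imbedding $E\hookrightarrow\ell^\infty$ (Lemma~\ref{lem:lipschitzres}), then use the compact imbedding $E\hookrightarrow\ell^2_m$ to convert weak convergence in $E$ into strong convergence in $\ell^2_m$. The paper phrases the complete continuity more succinctly by writing $\nabla\Psi$ on each ball as the composition $i_2\circ N_{R'}\circ i_1$ with $i_1:B_R\to\ell^2_m$ compact, $N_{R'}$ the Nemytskii operator on $\ell^2_m$, and $i_2:\ell^2_m\to E'$ the dual imbedding; your subsequence-plus-domination argument is precisely what is needed to justify the continuity of $N_{R'}$, which the paper leaves implicit.
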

\begin{proof}

%We will show that $\Psi_R: \ell^2_m \to \mathbb R$ is continuously differentiable and with compact imbedding $E\to \ell^2_m$ we get $\Psi_R: E \to \mathbb R$ is of class $C^1$ and is then completely continuous. In particular, by Lemma~\ref{lem:lipschitzres} there exists $R'>0$ for every $B_R \subset E$ such that $\Psi \big |_{B_R} = \Psi_{R'}$ and the same holds for $\Psi$.

Let $u\in E$ be arbitrary. Then there exists $R>0$ such that $u\in B_R$ as defined \eqref{eq:aball}. Then by Lemma~\ref{lem:lipschitzres} we have 
\begin{displaymath}
    \Psi(u) = \Psi_R(u)
\end{displaymath}
and we conclude $\Psi$ is $C^1$ with
\begin{align}
    \nabla \Psi(u) h = \sum_{x\in \mathcal V} m(x) f(x, u(x)) h.
\end{align}
Due to the compact imbedding $E\hookrightarrow \ell^2$ we have weak continuity. Furthermore, 

 One can obtain $\nabla \Psi(u)$ replacing $f$ by $f_{R}$ as follows
\begin{equation*}
\nabla \Psi(u)h=Ah=\sum\limits_{x\in\mathcal{V}}m(x) f_{R}(x,u(x))h(x) \,,\quad \forall h \in E\,.
\end{equation*}
 
 Let $R>0$ be arbitrary. We can then rewrite $\nabla \Psi: E \to E'$ as a composition of continuous operators.  
 
 Consider the imbedding 
 \begin{align*}i_1:B_R&\to \ell^2_m\\ g&\mapsto g ,\end{align*}
 the Nemytskii operator 
 \begin{align*}
     N_R:\ell^2_m&\to \ell^2_m\\
     u &\mapsto f(\cdot, u(\cdot)),
 \end{align*}  
 and the dual imbedding 
 \begin{align*}
 i_2:\ell^2_m&\to E'\\
 g&\mapsto \left (h \mapsto \sum_{x\in \mathcal V} m(x) g(x) h(x) \right ). 
 \end{align*}
 Then 
 \begin{align*}
 \nabla\Psi = i_2 \circ N_R \circ i_1
 \end{align*}
 is completely continuous since $i_1$ is compact by Proposition~\ref{lem:sobolev2}. 
%Consider $$0\leq |(\nabla \Psi(u_n)-\nabla \Psi(u),h )|\leq \|\tilde f_{R^{'}}(x,u_n)- \tilde f_R(x,u)\|\;\|h\|.$$ Then we have,
%$$|(\nabla \Psi(u_n)-\nabla \Psi(u),h )|=0,\;\;\forall h$$
%Then $$\|\nabla \Psi(u_n)-\nabla \Psi(u)\|=0,\text{\;\;thus,\;\;} \nabla %\Psi(u_n)\to\nabla \Psi(u)\text{\;\;in}\;\;l^2.$$ 

%From \cite{ke-le 2}, since $m(\mathcal{V})<\infty$, $\ell^{\infty}_m$ compactly imbedded into $\ell^2_m$. 

%Since the imbedding $E\subset \ell^2_m$ is compact from Lemma~\ref{lem:sobolev2},  we can obtain the weak continuity of $\Psi_{R^{'}}(u)$ and complete continuity of $\nabla \Psi_{R^{'}}(u)$.\\

% The operator $\nabla\Psi$ is  the composition of the Nemytski   operator $u\mapsto f(.,u(.))$ as an operator from $E$ into $\ell^2_m$ and the continuous imbedding $\ell^2_m\subset
	%E'$.

\end{proof}

\begin{proof}[Proof of Proposition~\ref{prop_for lemma}:]
Since the quadratic part of $J_{\lambda}$ is continuously differentiable, from  Lemma~\ref{l2.1}, \begin{equation*}
J_\lambda(u)=\frac{1}{2}q_\lambda(u)-\kappa \Psi(u)\,,
\end{equation*}
is $C^1$, and its derivative as a linear functional is given by 
$$
\nabla J_\lambda(u)v=q_\lambda(u,v)-\kappa \nabla \Psi(u)v \,,\quad \forall v\in E\,
$$ with $\kappa=1$ and $\kappa=-1$.\\

\end{proof}

%Using this lemma, and the fact that the quadratic part of $J_{\lambda}$ is bounded, one can conclude that $\nabla J_{\lambda}$ is a $C^1$ functional on $E$.

%\begin{remark}\label{r2.2}
%	The operator $\nabla\Psi$ is  the composition of the Nemytski   operator $u\mapsto
	
    %f(x,u)$ as an operator from $E$ into $\ell^2_m$ and the continuous imbedding $\ell^2_m\subset
	%E'$.
%\end{remark}

\section{Proof of Theorem \ref{t5.1}}\label{s3}

Now we consider the energy functional $J_\lambda (u)$ defined in  \eqref{energy}  and apply Theorem~\ref{t3.2} to
$$
\kappa J_\lambda (u)=\frac{\kappa}{2}q_\lambda(u)-\Psi(u)\, .
$$
In accordance with the notations of Section 6 (Appendix), in our case,  $Q(u)=\kappa q_\lambda(u)$ and $\Phi(u)=\Psi(u)$ and $\Psi(u)$ is given in Lemma~\ref{l2.1}. Note that the cases $\kappa=1$ and $\kappa=-1$ have certain differences. 

\begin{proof}[Proof of Theorem~\ref{t5.1}] Under our new assumptions, we give the proof of this theorem as an adaptation of the one in \cite{akd-pank} with a discrete setting. In order to apply Theorem~\ref{t3.2} to the functional   $\kappa J_\lambda (u)$ , we first prove that the assumptions (i) - (v) of Section~\ref{s7} are  satisfied.\\

$(v):$ It follows from Lemma~\ref{l2.1}.\\

$(i):$ Again by Lemma~\ref{l2.1}, $\Phi$ is weakly lower semicontinuous. Now we show that $(\ref{e3.1})$ holds, that is, the remaining part of $(i)$ is satisfied. Since
$f(x,s)>0$  for $s>0$ and  $f(x,s)<0$  for $s<0$, it is clear that $F(x,s)>0$ for
all $s\neq 0$. From the Assumption $(f_2)$, we obtain the following estimation immediately 
$$
F(x,s)<\int_0^s|\tau||s|^{-1}f(x,s) d\tau=\frac{1}{2}f(x,s)s\,,
$$
and hence, 
$$\Psi(s)=\sum_{x\in \mathcal{V}}m(x)F(x,s)<\frac{1}{2}\sum\limits_{x\in\mathcal{V}}m(x)f(x,s)s=\frac{1}{2}\nabla\Psi(s)s\;,\;\;\forall s\neq 0,
$$
Thus, $(\ref{e3.1})$ holds, that is, the remaining part of $(i)$ is
satisfied.\\

$(iii):$ In view of Assumption $(f_1)$, and the imbeddings $E\hookrightarrow \ell^\infty$ and $E\hookrightarrow \ell^2_m$, it is easy to see that if $u\in E$, then 
$$
\|f(x,u)\|^2_{\ell^2_m}\leq \mu^2(R)\|u\|^2_{\ell^2_m}\leq C\mu^2(R)\|u\|_E^2\,,
$$
where $R=\|u\|_{\ell^\infty_m}\leq C_1\|u\|_E$. We know that $\mu(R)\to 0$ as $R\to 0$. Combining this with
Lemma~\ref{l2.1}, we concluded that $(iii)$ holds.\\

$(iv):$ Suppose to the contrary that there is a weakly compact set
$W\subset E \setminus \{0\}$ and a sequence $\{u_n\}$ in $W$ such that $\tau_n^{-2}\Psi(\tau_n u_n )$ is bounded as a sequence $\tau_n\to \infty$. Passing to a subsequence, we can assume that $u_n\to
u\neq 0$ weakly in $E$ and, by the assumption $E\hookrightarrow \ell^2_m$ compactly, strongly in
$\ell^2_m$. Hence, passing to a further subsequence, $u_n(x)\to u(x)$ a.e. on $\mathcal{V}$.
From Assumption $(f_3)$, $\dfrac{F(x,\tau_{n}u_n)}{{(\tau_n
		u_n)}^2}\to \infty$ since $|\tau_nu_n(x)|\to \infty$.  So if $u(x)\neq 0$, since $F\geq 0$, a consequence of  the Fatou Lemma is
$$\frac{\Psi(\tau_nu_n)}{{\tau_n}^2}=\sum\limits_{x\in\mathcal{V}}m(x)\frac{F(x,\tau_n 
	u_n)}{{(\tau_n
		u_n)}^2}u_n^2 \to \infty\,, as\;\; n\to \infty,$$
we get a contradiction.
%because $\tau_n^{-2}\Psi(\tau_n u_n )$ would be bounded.\\

$(ii):$ For the real number $\lambda$, let us define the (closed) subspaces generated by the eigenvectors with eigenvalues $<\lambda$, $=\lambda$ and
$>\lambda$ which are denoted by $E^-$, $E^0$ and
$E^+$, respectively:
$$E^-=\bigoplus\limits_{\{k:\lambda_k<\lambda\}} \ker(L-\lambda_k I),\quad E^0= \ker(L-\lambda I), \quad E^+=\bigoplus\limits_{\{k:\lambda_k>\lambda\}} \ker(L-\lambda_k I),$$
The form $q_\lambda$ is positive (respectively, negative) definite on
$E^+$ (respectively, on $E^-$), \emph{i.e.}, there exists a constant $\beta=\beta(\lambda)>0$ such that
\begin{equation}\label{e4.1}
	\pm q_\lambda (u)\geq\beta\|u\|^2_E \,,\quad u\in E^\pm\,.
\end{equation}

Here $E^-$ and $E^0$ are finite dimensional subspaces, while $E^+$
has infinite dimension. If $\lambda_n<\lambda\leq\lambda_{n+1}$, then $E^-=\bigoplus_{k=1}^n \ker(L-\lambda I)$
and $N$ is the dimension of this space.
These subspaces serve the functional $ \kappa J_\lambda$. If $\kappa=1$, then $F=E^-\oplus
E^0$. If $\kappa=-1$, since the form $-q_\lambda$ is positive (respectively, negative) on $E^-$(respectively, on $E^+$), $F=E^+\oplus E^0$ in this case.
As in the proof of \cite[Theorem~4.1]{akd-pank}, we consider the case $\kappa=-1$ only. The other one being simpler.

 Now we will show that $-J_{\lambda}$ attains its unique (positive) maximum on ${\hat E}(w)\cap\mathcal{N}$ following the following steps
 \begin{itemize}
     \item  $-J_{\lambda}$ attains its unique maximum on ${\hat E}(w)$.
     \item ${\hat E}(w)\cap\mathcal{N}\neq\emptyset$ for any $w\in E\setminus F=E\setminus
(E^0\oplus E^+)$.
\item the uniqueness of global maximum of $-J_{\lambda}$ on ${\hat E}(w)\cap\mathcal{N}$.
 \end{itemize}
Since ${\hat E}(w)={\hat E}(w^-/\|w^-\|)$, without loss of generality we may assume that $w\in E^-$ and $\|w\|=1$. We will show that there
exists $R>0$ such that $-J_\lambda(u)\leq 0$ for all $u\in {\hat E}(w)$ with $\|u\|\geq R$. Suppose to the contrary that we can find a sequence $\{u_n\}$ such that $\|u_n\|\to\infty$ and $-J_\lambda(u_n)\geq 0$. We set $v_n=\|u_n\|^{-1}u_n$. Passing to a subsequence, we may assume that $v_n\to
v$ weakly in the space $E$. And it is easily seen that the first two terms of the following relation
\begin{equation}\label{e4.2}
	0\leq \frac{-J_\lambda(u_n)}{\|u_n\|_E^2}=
	-\frac{q_\lambda(v_n^-)}{2}-\frac{q_\lambda(v_n^+)}{2}-\frac{\Psi(\|u_n\|_Ev_n)}{\|u_n\|^2_E}\,.
\end{equation}
are bounded. It folllows immediately from $(iv)$ that if $v\neq 0$, then the third term
tends to $-\infty$, a contradiction. Hence, $v=0$. It implies that $v_n^-\to 0$ and $v_n^0\hookrightarrow 
0$ weakly. Since $v^-_n$ and $v^0_n$ belong to finite
dimensional subspaces $E^-=\mathbb{R}w$ and $E^0$, respectively, $v_n^-\rightharpoonup  0$ and $v_n^0\rightharpoonup  
0$ weakly imply  $v_n^-\to 0$ and $v_n^0\to 0$ strongly.  From the inequalities
(\ref{e4.1}), (\ref{e4.2}), and since $\Psi\geq 0$,  $\|v_n^+\|_E\leq \|v_n^-\|_E\to 0$.

On the other hand,

$$
1=\|v_n\|^2_E=\|v_n^-\|^2_E+\|v_n^0\|^2_E+\|v_n^+\|^2_E\to 0\,,
$$
a contradiction.

Since $-J_\lambda\leq 0$ on ${\hat E}(w)\setminus B_R(0)$ for sufficiently large $R$, the boundedness of $\Psi$ on bounded subsets of $E$ implies  that
$\sup_{u\in{\hat E}(w)} (-J_\lambda(u))<\infty$. Using $(iii)$, we get $-J_\lambda(tw)=\gamma
t^2+o(t^2)$ as $t\to 0$, where $\gamma=-q_\lambda(w)/2>0$. Therefore, the supremum is positive. And we have $0<\sup_{u\in{\hat E}(w)} (-J_\lambda(u))<\infty$. We shall show that $-J_\lambda$ achieves its (positive) maximum value on ${\hat E}(w)$. To prove upper weakly semicontinuity of 
$-J_\lambda$ on $E(w)$ is sufficient for us. By $(i)$, $\Psi$ is weakly continuous, so it is
enough to prove that $q_\lambda$ is weakly \emph{low} semicontinuous on $E(w)$.
For this, let us take a weakly convergent sequence $u_n=t_nw+u_n^0+u^+_n\in E(w)$, which converges weakly in $E$. This implies  $t_n\to t$,
$u^+_n\to u^+$ weakly, and $q_\lambda(u_n)=q_\lambda(t_nw)+q_\lambda(u^+_n)$. Since
$q_\lambda|_{E(w)}$ is a positive definite continuous quadratic form, it is a convex continuous
function on $E(w)$, and so, it must be weakly low semicontinuous. Since $q_\lambda$ is weakly lower semicontinuous and it has a minimizing sequence $\{t_n w\}$, using the fact that $q_\lambda(t_nw)\to q_\lambda(tw)$, we can say that $q_\lambda$ has a minimum on  ${\hat E}(w)$. Because, $tw\in \mathcal{N}$ and so ${\hat E}(w)\cap\mathcal{N}\neq\emptyset$.

The uniqueness of global maximum of $-J_\lambda|_{{\hat E}(w)}$ can be obtained exactly as in
\cite[Proposition~39]{sz-weth-10} (see also \cite{sz-weth-09}).

\end{proof}

\section{Proof of Theorem~\ref{theorem2}}

   In this section we prove Theorem~\ref{theorem2}. The subspaces $E^-$, $E^0$, and $E^+$ is defined as in Section~\ref{s3}. Under our assumption, we can say that $E^0=\{0\}$.   Let us recall the spectral decomposition for a given $\lambda\not \in \sigma(L)$
    \begin{equation}\label{eq:energyplusminus}
        E= E^- \oplus E^+, \quad E^-=\bigoplus\limits_{1\le n\le k-1} \ker( L-  \lambda_n I),\quad E^+=\bigoplus\limits_{n\ge k} \ker( L- \lambda_n I). 
    \end{equation}
    The following lemma allows us a discrete version of well-known inequalities which are adapted version of \eqref{e4.1}. It may be useful to denote $\lambda_0=-\infty$. 
   
   \begin{lemma}\label{lemma6.1}
	Let $k\in \mathbb N$. If $ \lambda\in ( \lambda_{k-1}, \lambda_k)$, then  
	
	\begin{equation}\label{eq:lowerboundq}
	 q_{ \lambda} (u)\geq \dfrac{ \lambda_k- \lambda}{ \lambda_k}\|u\|_E^2\,,\quad u\in E^+,
	\end{equation}
	and for $k>1$ we have
		\begin{equation}\label{eq:upperboundq}
		 q_{ \lambda} (u)\leq \dfrac{{ \lambda}_{k-1}-{ \lambda}}{{ \lambda}_{k-1}}\|u\|_E^2\,,\quad u\in E^-\;\;.
		\end{equation}
\end{lemma}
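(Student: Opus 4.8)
The plan is to reduce both estimates to an elementary, term-by-term comparison of eigenvalues via the spectral decomposition of $L$. Since $\sigma(L)$ is discrete by Proposition~\ref{lem:sobolev2} and $L\ge I$, I would fix an orthonormal basis $\{\phi_n\}_{n\ge 1}$ of $\ell^2_m$ consisting of eigenfunctions, $L\phi_n=\lambda_n\phi_n$ with $1\le\lambda_1\le\lambda_2\le\cdots$. Using that $E=D(q)=D(L^{1/2})$ together with $\|u\|_E^2=q(u)=(Lu,u)_m$, any $u\in E$ expands as $u=\sum_n c_n\phi_n$ with $c_n=(u,\phi_n)_m$, and one has the identities
\begin{equation*}
\|u\|_{\ell^2_m}^2=\sum_n c_n^2,\qquad \|u\|_E^2=\sum_n \lambda_n c_n^2,\qquad q_\lambda(u)=\sum_n (\lambda_n-\lambda)c_n^2 .
\end{equation*}
All three series converge absolutely for $u\in E$, which is precisely the statement $u\in D(L^{1/2})$.

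For the lower bound \eqref{eq:lowerboundq}, I would use that $u\in E^+$ has $c_n=0$ whenever $\lambda_n<\lambda$, so only indices with $\lambda_n\ge\lambda_k$ contribute to the expansions above. It therefore suffices to prove the scalar inequality
\begin{equation*}
\lambda_n-\lambda\ge\frac{\lambda_k-\lambda}{\lambda_k}\,\lambda_n\qquad\text{whenever }\lambda_n\ge\lambda_k,
\end{equation*}
and then multiply by $c_n^2\ge 0$ and sum. Clearing the positive factor $\lambda_k$, this scalar inequality is equivalent to $\lambda(\lambda_n-\lambda_k)\ge 0$, which holds because $\lambda>0$ and $\lambda_n\ge\lambda_k$. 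Summing over $n\ge k$ gives \eqref{eq:lowerboundq}.

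The upper bound \eqref{eq:upperboundq} for $k>1$ is handled symmetrically, with the inequalities reversed. Now $u\in E^-$ involves only indices with $\lambda_n\le\lambda_{k-1}$, and the required scalar bound is $\lambda_n-\lambda\le\frac{\lambda_{k-1}-\lambda}{\lambda_{k-1}}\lambda_n$; clearing the positive denominator $\lambda_{k-1}$ reduces it to $\lambda(\lambda_n-\lambda_{k-1})\le 0$, valid since $\lambda_n\le\lambda_{k-1}$ and $\lambda>0$. Summing against $c_n^2\ge 0$ over $1\le n\le k-1$ yields \eqref{eq:upperboundq}.

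The only point requiring care is the sign of $\lambda$, which is exactly what orients both scalar comparisons correctly: for $k>1$ one has $\lambda>\lambda_{k-1}\ge\lambda_1\ge 1>0$ automatically, while for $k=1$ (where $\lambda_0=-\infty$) the lower bound is applied in the regime $0\le\lambda<\lambda_1$. I anticipate no analytic difficulty beyond recording the spectral identities for $q$, $q_\lambda$ and $\|\cdot\|_E$, which are immediate from the functional calculus for $L$; since $q_\lambda(u)/\|u\|_E^2$ is a weighted average of the quantities $1-\lambda/\lambda_n$ with nonnegative weights $\lambda_n c_n^2$, the comparison is purely elementary.
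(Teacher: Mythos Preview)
Your argument is correct and essentially the same as the paper's: both proofs expand $u$ in an $\ell^2_m$-orthonormal eigenbasis, identify $\|u\|_E^2=\sum_n\lambda_nc_n^2$ and $q_\lambda(u)=\sum_n(\lambda_n-\lambda)c_n^2$, and reduce \eqref{eq:lowerboundq}--\eqref{eq:upperboundq} to the elementary monotonicity of $t\mapsto 1-\lambda/t$ on the relevant eigenvalue range. The paper factors out $(1-\lambda/\lambda_n)$ directly, while you clear denominators to reach $\lambda(\lambda_n-\lambda_k)\ge 0$; these are the same step, and your remark about needing $\lambda\ge 0$ when $k=1$ is an honest caveat that the paper's proof implicitly shares.
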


\begin{proof}
	The proof is an easy adaptation of \cite[Lemma~6.1]{akd-pank}. Let $\{e_{n}\}$ be an orthonormal basis of eigenfunctions corresponding to the eigenvalues ${ \lambda}_n$. 
    %and for ${ \lambda}\in ({ \lambda}_{k-1},{ \lambda}_k)$,  let 
   
   %$\{e_{n}\}_{n \geq  k,\; 1\leq j\leq m_n}$ be a basis of $E^+$. 
   Suppose $u\in E^+$, then there exists $\{a_{n}\} \subset \mathbb R$ such that
 \begin{equation}
    u =\sum_{n\ge k} a_{n} e_{n} \in E^+.
 \end{equation}
 Then by Parseval's theorem
 \begin{align*}
 \|u\|_{l^2_m}^2&=\sum_{n\ge k} a_{n}^2\\
%Then $$ q_{ \lambda}(e_{n}, e_{n}) = \begin{cases} 0, &\quad i\neq j \\ { \lambda}_n - { \lambda}, &\quad i =j,\end{cases} $$ and applying Parseval's theorem again we get
  \|u\|_{E}^2&=  q(u,u)=\sum_{n\ge k} { \lambda}_na_{n}^2.
  \end{align*}
  Using the fact that $ q_{{ \lambda}}(u)= q(u)-{ \lambda}\|u\|_{\ell^2_m}^2,$ we get
 \begin{align*}
      q_{ \lambda}(u) &= \sum_{n\ge k}({ \lambda}_n- { \lambda})a_{n}^2 \nonumber\\\nonumber &=\sum_{n\ge k} \left ( 1 - \frac{{ \lambda}}{{ \lambda}_n}\right ) { \lambda}_n a_{n}^2 \nonumber \\ \nonumber&\ge \left ( 1- \frac{{ \lambda}}{{ \lambda}_k}\right ) \sum_{n\ge k} { \lambda}_n a_{n}^2 \\ \nonumber&= \frac{{ \lambda}_k-{ \lambda}}{{ \lambda}_k}  q(u).  
 \end{align*}
 Similarly for any $u \in E^-$ there exists $\{b_{n}\} \subset \mathbb R$ for $1\le n \le k-1$ and $1\leq j\leq m_n$ such that
 \begin{equation*}
    u = \sum_{1\le n\le k-1} b_{n}e_{n}
 \end{equation*}
 and we obtain similarly
\begin{equation*}
  q_{ \lambda}(u) \ge \frac{ \lambda_{k-1} -  \lambda}{ \lambda_{k-1}}  q(u).
\end{equation*}
Recall $\|u\| =  q(u)$ and we obtain \eqref{eq:lowerboundq} and \eqref{eq:upperboundq}.
\end{proof}

%This assumption is needed for the following lemma. For the discrete case, in the spirit of \cite{akd-pank}, with the difference of additional assumption $\emph{(V)}$, we may follow that $E$ is compactly imbedded into $\ell^p_m$ for all $p\in [2,\infty]$.\\

In the sequel, let us denote the orthogonal projectors in $E$ onto the subspaces $E^+$ and $E^-$ (defined in \eqref{eq:energyplusminus}) by $P^+$ and $P^-$, respectively. Then $P^-=I-P^+$, where $I$ stands for the identity operator. Note also that $P^+$ and $P^-$ extend to orthogonal projectors in $\ell^2_m$. 

\begin{lemma}\label{lem:6.2}
	Suppose \eqref{eq:replacesexpgrowth} holds.  Let ${ \lambda}\in ({ \lambda}_{k-1},{ \lambda}_k)$ for $k>1$, then suppose \eqref{Sobolev_Assumption} holds, then 
    % \begin{itemize}
    % \item if \eqref{Sobolev_Assumption},
    % then for every $p\geq 2$, 
    % \item if \eqref{eq:replacesexpgrowth}, then for every $p\geq 1$
    % \end{itemize}
    the projectors $P^+$ and $P^-$ are bounded operators with respect to  $\|\cdot\|_{\ell^p_m}$.
\end{lemma}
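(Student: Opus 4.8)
The plan is to use two facts that are already available: the subspace $E^-$ is finite dimensional, whereas the embedding $E\hookrightarrow \ell^q_m$ holds for \emph{every} $q\in[1,\infty]$ by Proposition~\ref{prop:sobpimb}. First I would fix an orthonormal basis $e_1,\dots,e_N$ of $E^-$ in $\ell^2_m$ consisting of eigenfunctions of $L$ associated to $\lambda_1,\dots,\lambda_{k-1}$ (counted with multiplicity); here $N=\dim E^-=\sum_{1\le n\le k-1}\dim\ker(L-\lambda_n I)<\infty$ because the spectrum is discrete. Since these eigenspaces are mutually orthogonal both in $\ell^2_m$ and in $E$, the projector onto $E^-$ (extended to $\ell^2_m$ as noted before the lemma) is given explicitly by
\begin{equation*}
P^- u=\sum_{j=1}^N (u,e_j)_m\, e_j .
\end{equation*}

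Next I would estimate this expression in $\ell^p_m$ term by term. As each $e_j\in E$, Proposition~\ref{prop:sobpimb} yields $e_j\in\ell^q_m$ for all $q\in[1,\infty]$; in particular $e_j\in\ell^p_m\cap\ell^{p'}_m$, where $p'$ denotes the H\"older conjugate of $p$. Splitting $m(x)=m(x)^{1/p}m(x)^{1/p'}$ and applying the weighted H\"older inequality gives
\begin{equation*}
|(u,e_j)_m|=\Big|\sum_{x\in\mathcal V} m(x)u(x)e_j(x)\Big|\le \|u\|_{\ell^p_m}\,\|e_j\|_{\ell^{p'}_m},
\end{equation*}
so that
\begin{equation*}
\|P^- u\|_{\ell^p_m}\le \sum_{j=1}^N |(u,e_j)_m|\,\|e_j\|_{\ell^p_m}\le \Big(\sum_{j=1}^N \|e_j\|_{\ell^{p'}_m}\|e_j\|_{\ell^p_m}\Big)\|u\|_{\ell^p_m}.
\end{equation*}
The constant in parentheses is finite and independent of $u$, which proves that $P^-$ is bounded on $\ell^p_m$. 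Writing $P^+=I-P^-$ and using that the identity is trivially bounded, $P^+$ is then bounded on $\ell^p_m$ as well. The endpoint cases $p=1$ (so $p'=\infty$) and $p=\infty$ (so $p'=1$) are covered by the same computation, using $\|e_j\|_\infty<\infty$ and $\|e_j\|_{\ell^1_m}<\infty$.

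The only genuine subtlety is the requirement that each eigenfunction lie simultaneously in $\ell^p_m$ \emph{and} in the dual space $\ell^{p'}_m$ for the whole range $p\in[1,\infty]$. Since $\{p,p'\}$ always contains an exponent $\le 2$, and in fact an exponent $<2$ whenever $p\ne 2$, this is precisely where the hypotheses enter: the case $p=2$ is immediate because $P^\pm$ are orthogonal projectors in $\ell^2_m$, while for every $p\ne2$ one needs $e_j$ in the low-exponent spaces $\ell^q_m$ with $q<2$. This membership is exactly the content of the full-range embedding $E\hookrightarrow\ell^q_m$ of Proposition~\ref{prop:sobpimb}, which upgrades the basic inclusion $E\hookrightarrow\ell^2_m\cap\ell^\infty$ to all $q\in[1,\infty]$ by invoking canonical compactifiability together with the summability condition \eqref{eq:replacesexpgrowth}. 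I expect this to be the essential point to flag rather than a real difficulty, as the remaining estimates are entirely routine.
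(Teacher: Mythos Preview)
Your proof is correct and follows essentially the same route as the paper: reduce to $P^-$ via $P^+=I-P^-$, expand $P^-u$ in an $\ell^2_m$-orthonormal basis $\{e_1,\dots,e_N\}$ of the finite-dimensional space $E^-$, and bound the coefficients $(u,e_j)_m$ by H\"older using $e_j\in\ell^{p'}_m$. The only cosmetic difference is that the paper passes from the coefficient bound to the $\ell^p_m$ bound by invoking equivalence of norms on the finite-dimensional space $E^-$, whereas you do this step explicitly via the triangle inequality; your discussion of why the full-range embedding $E\hookrightarrow\ell^q_m$ (and hence the hypothesis \eqref{eq:replacesexpgrowth}) is needed is also more detailed than in the paper.
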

\begin{proof}
    %In both cases we have $E\hookrightarrow \ell^p$ by Lemma~\ref{lem:sobolev1} and Proposition~\ref{prop:sobpimb}. 
    By Lemma~\ref{lem:sobolev1} we have $E\hookrightarrow \ell^p$. 
	Since $P^+=I-P^-$, without loss of generality, we consider the projector $P^-$ only. 
	
	Since $E^-$ is finite dimensional, if $N=\dim E^-$, then one can have an $\ell^2_m$- orthonormal basis  $\{e_1,e_2,\ldots,e_N\}$ in $E^-$.
	
	Thus, for each $u\in E$, $P^-u$ has the following expression
	$$P^-u=\sum\limits_{j=1}^{N}\xi_je_j.$$
	Since all norms in $E^-$ are equivalent, it is enough to show that 
	$$|\xi_j|\leq C \|u\|_{\ell^p_m},\; j=1,\ldots,N,$$
	for any constant $C>0$ independent of $j$ and $u$. Clearly, 
	$$\xi_j=(u,e_j)_{\ell^2_m},\;j=1,\ldots,N.$$
	Since $u\in E\subset \ell^p_m$ and $e_j\in \ell^{p{'}}_m$ for $j=1,\ldots,N$. 
	With H\"{o}lder's inequality we have
    \begin{equation}
        (u, e_j)_{\ell^2_m} \le \|u\|_{ \ell^p_m}\|e_j\|_{ \ell^p{'}_m}, 
    \end{equation}
    that implies the desired one.
	\end{proof}
    Recall in the following
    \begin{displaymath}
        \delta( \lambda) := \operatorname{dist}( \lambda, \sigma( L)). 
    \end{displaymath}
	Assuming, in the lemma below, we get estimates for the $\ell^p_m$- norms of a critical point in terms of its critical values.
	\begin{lemma}\label{lem:6.3}
		In addition to assumptions $(f_1)-(f_4)$, assume that the nonlinearity satisfies \eqref{AR}  for some $q>2$
		%\begin{equation}\label{AR}
     % 0<qF(x,s)\leq f(x,s)s,\;s\in \mathbf{R}\setminus \{0\}
		%\end{equation}
		and \eqref{6.1} for some $p\ge 2, a_0 >0$. 
		%\begin{equation}\label{6.1}
		%F(x,s)\geq a_0|s|^p.
		%\end{equation}
 Let $ \lambda\in ( \lambda_{k-1}, \lambda_k)$ with $k>1$. 
        \begin{itemize}
        \item Then there exists a constant $C>0$ such that for any critical point $u\in E$ of $\kappa J_{\lambda}$,
		\begin{equation}\label{6.2}
		\|u\|_{\ell^p_m}^p\leq C \kappa J_{\lambda}(u).
		\end{equation}
		\item Assume, in addition, that the requirements of Lemma~\ref{lem:6.2}  are satisfied, and \eqref{6.3} holds
         for some $a_1>0$.
		%\begin{equation}
		%|f(x,s)|\leq a_1|s|^{p-1}.
		%\end{equation}
		Then
		\begin{equation}\label{6.4}
		\delta(\lambda)\|u\|_E^2\leq C\kappa J_{\lambda}(u).
		\end{equation}
        \end{itemize}
	\end{lemma}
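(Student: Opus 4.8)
The plan is to exploit the fact that at a critical point the energy collapses to a purely ``potential'' expression. Testing the identity $\nabla(\kappa J_\lambda)(u)=0$ against $u$ itself yields the Nehari relation $\kappa q_\lambda(u)=\sum_{x\in\mathcal V}m(x)f(x,u(x))u(x)$, and substituting this into $\kappa J_\lambda(u)=\tfrac{\kappa}{2}q_\lambda(u)-\Psi(u)$ gives
\begin{equation*}
\kappa J_\lambda(u)=\sum_{x\in\mathcal V}m(x)\Big(\tfrac12 f(x,u(x))u(x)-F(x,u(x))\Big).
\end{equation*}
The Ambrosetti--Rabinowitz bound \eqref{AR} then provides the pointwise estimate $\tfrac12 f(x,s)s-F(x,s)\ge \tfrac{q-2}{2}F(x,s)\ge 0$, and combining this with the lower bound \eqref{6.1} I obtain
\begin{equation*}
\kappa J_\lambda(u)\ge \frac{q-2}{2}\sum_{x\in\mathcal V}m(x)F(x,u(x))\ge \frac{q-2}{2}\,a_0\,\|u\|_{\ell^p_m}^p,
\end{equation*}
which is exactly \eqref{6.2} with $C=2/((q-2)a_0)$. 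This step is routine once the Nehari identity and the sign of $F$ are in place.

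For \eqref{6.4} I would split $u=u^++u^-$ along the spectral decomposition \eqref{eq:energyplusminus} (recall $E^0=\{0\}$) and test the critical point equation separately against $u^+$ and $u^-$. Because $q_\lambda$ is diagonal in the eigenbasis, the cross term $q_\lambda(u^+,u^-)$ vanishes, so the critical point identity reads $q_\lambda(u^\pm)=\kappa\sum_{x}m(x)f(x,u(x))u^\pm(x)$. On $E^+$ the lower bound \eqref{eq:lowerboundq} gives $\tfrac{\lambda_k-\lambda}{\lambda_k}\|u^+\|_E^2\le q_\lambda(u^+)$, while on $E^-$, where $q_\lambda$ is negative definite, the upper bound \eqref{eq:upperboundq} rearranges to $\tfrac{\lambda-\lambda_{k-1}}{\lambda_{k-1}}\|u^-\|_E^2\le -q_\lambda(u^-)$. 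To control the right-hand sides I use the growth bound \eqref{6.3} together with Hölder's inequality with exponents $p/(p-1)$ and $p$,
\begin{equation*}
\sum_{x}m(x)\,|f(x,u(x))|\,|u^\pm(x)|\le a_1\,\|u\|_{\ell^p_m}^{p-1}\,\|u^\pm\|_{\ell^p_m},
\end{equation*}
and then Lemma~\ref{lem:6.2}, which bounds $\|u^\pm\|_{\ell^p_m}\le C_\pm\|u\|_{\ell^p_m}$ since the projectors $P^\pm$ are bounded on $\ell^p_m$. Using $\lambda_k-\lambda\ge\delta(\lambda)$ and $\lambda-\lambda_{k-1}\ge\delta(\lambda)$ this yields $\delta(\lambda)\|u^+\|_E^2\le \lambda_k a_1 C_+\|u\|_{\ell^p_m}^p$ and $\delta(\lambda)\|u^-\|_E^2\le \lambda_{k-1} a_1 C_-\|u\|_{\ell^p_m}^p$; summing these and feeding in \eqref{6.2} from the first part gives \eqref{6.4}.

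The main obstacle is the second part, and specifically lining up three ingredients correctly: the two-sided spectral bounds of Lemma~\ref{lemma6.1} with the sign reversal on $E^-$, the passage from $\|u^\pm\|_{\ell^p_m}$ back to $\|u\|_{\ell^p_m}$ through the $\ell^p_m$-boundedness of the projectors, and the replacement of the individual eigenvalue gaps $\lambda_k-\lambda$ and $\lambda-\lambda_{k-1}$ by the single distance $\delta(\lambda)$. A point worth emphasizing is that $E^+$ and $E^-$ depend only on the fixed index $k$ and not on the particular $\lambda\in(\lambda_{k-1},\lambda_k)$, so the projector norms $C_\pm$ are constants independent of $\lambda$; consequently the resulting constant $C$ is uniform over the interval, which is precisely what the bifurcation estimates of Theorem~\ref{theorem2} will require when $\lambda$ is driven towards an endpoint of the spectral gap.
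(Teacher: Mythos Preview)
Your argument is correct and follows essentially the same route as the paper: the Nehari identity combined with \eqref{AR} and \eqref{6.1} for the first estimate, and for the second the spectral splitting $u=u^++u^-$, testing $\nabla J_\lambda(u)$ against $u^\pm$, invoking the two-sided bounds of Lemma~\ref{lemma6.1}, then Hölder with \eqref{6.3} and the $\ell^p_m$-boundedness of the projectors from Lemma~\ref{lem:6.2}. Your explicit remark that $E^\pm$ and hence the projector norms depend only on $k$ (not on $\lambda$ within the gap) is a worthwhile addition, since this is exactly what makes the constant in \eqref{6.4} uniform for the bifurcation estimates.
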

	\begin{proof}
		Consider the case $\kappa=1$, as the other one can be obtained analogously. From \eqref{AR}, we get
			\begin{equation}\label{6.5}
			J_{\lambda}(u)\geq (2^{-1}-q^{-1})\sum\limits_{x\in \mathcal{V}}m(x)f(x,u(x))u(x).
			\end{equation}
		Using both \eqref{AR} and \eqref{6.1} we have
            \begin{align*}
                J_{\lambda(u)} &\ge (2^{-1} - q^{-1}) \sum_{x\in \mathcal V} m(x) f(x,u) u\\
                &\ge (2^{-1} - q^{-1} ) \sum_{x\in\mathcal V} m(x) q F(x,u)\\
                &\ge q (2^{-1} - q^{-1}) a_0 \sum_{x\in \mathcal V} m(x) |u|^p
            \end{align*}
			and \eqref{6.2} follows immediately. 
            
            Now, let us prove the remaining part of the lemma. For now assume that $k>1$. For $u\in E$ denote in the following
            \begin{equation}
                u^+ := P^+ u, \qquad u^- := P^- u
            \end{equation}
            the projections of $u$ onto $E^+$ and $E^-$, respectively. Then
			$$0=\nabla J_{\lambda}(u)u^{+}=q_{\lambda}(u^{+})-\sum\limits_{x\in \mathcal{V}}m(x)f(x,u)u^{+},$$
			and
		$$0=\nabla J_{\lambda}(u)u^{-}=q_{\lambda}(u^{-})-\sum\limits_{x\in \mathcal{V}}m(x)f(x,u)u^{-}.$$ From Lemma~\ref{lemma6.1}, we obtain that
        \begin{align}\label{eq:plusest}
        \sum\limits_{x\in \mathcal{V}}m(x)f(x,u)u^{+}&\geq \dfrac{\lambda_k-\lambda}{\lambda_k}\|u^+\|_
        E^2 \ge \dfrac{\delta(\lambda)}{\lambda_k}\|u^+\|_
        E^2, \\ \label{eq:minusest} 
        -\sum\limits_{x\in \mathcal{V}}m(x)f(x,u)u^{-}&\geq \dfrac{\lambda-\lambda
        _k}{\lambda_{k-1}}\|u^-\|_
        E^2 \ge \dfrac{\delta(\lambda)}{\lambda_k}\|u^-\|_
        E^2. 
        \end{align}

		by the spectral decomposition theorem we have $\| u \|_E^2= \| u^+\|_E^2 + \|u^-\|_E^2$. If we add \eqref{eq:plusest} and \eqref{eq:minusest} together, we get together with \eqref{6.3}
		\begin{align*}
			\dfrac{\delta(\lambda)}{\lambda_k}\|u\|_E^2 &\leq   \sum\limits_{x\in \mathcal{V}}m(x)|f(x,u)||u^{+}|+\sum\limits_{x\in \mathcal{V}}m(x)|f(x,u)||u^{-}|\\
		&\leq	a_1 \sum\limits_{x\in \mathcal{V}}m(x)|u|^{p-1}|u^{+}|+a_1 \sum\limits_{x\in \mathcal{V}}m(x)|u|^{p-1}|u^{-}|.
		\end{align*}
		Using H\"{o}lder's inequality with mutually conjugate exponents $q$ and $p$, we get
		
		$$	\dfrac{\delta(\lambda)}{\lambda_k}\|u\|_E^2 \leq a_1 \|u\|^{p/q} _{\ell^p_m}(\|u^{+}\|_{\ell^p_m}+\|u^{-}\|_{\ell^p_m}).$$
		From Lemma~\ref{lem:6.2}, we conclude that $P^+$ and $P^-$ are bounded in $\ell^p_m$, that is,
		$$\|u^{+}\|_{\ell^p_m}+\|u^{-}\|_{\ell^p_m}\leq C \|u\|_{\ell^p_m}.$$
		Then with \eqref{6.2} we obtain \eqref{6.4}.
		
		The case $k=1$ is analogue, since in this case $E^{-}$ will be trivial, and we have $u=u^+$. In particular, Lemma~\ref{lem:6.2} is not needed.
 		\end{proof}

 		\begin{proof}[Proof of Theorem~\ref{theorem2}]
 			Let $\kappa=1$ and assume that $\lambda< \lambda_k$ such that $\delta(\lambda)=\lambda_k-\lambda$. Choose an eigenvector $e\in D(L)$ corresponding to the eigenvalue $\lambda_k$ with $\|e\|_{\ell^p_m}=1$. 
            
            Let $u=te+w$, where $t>0$, $w\in E$. Then since $q_{\lambda}(u)=q_{\lambda}(u^+)+q_{\lambda}(u^-)$ for each $u\in E$, we have $q_{\lambda}(u)=t^2q_{\lambda}(e)+q_{\lambda}(w)$. Then by Remark~\ref{r3.3}, we get 
            \begin{align}
            c_{\lambda}:=c= \inf_{w\in {E^-} }\max_{u\in{\hat {E}}(e)} J_{\lambda}(u)&\leq \max_{u\in{\hat{E}}(e)} J_{\lambda}(u)\nonumber\\  \nonumber&=\max_{t>0,\;w\in {E^-}} \left(\dfrac{t^2q_{\lambda}(e)}{2}-\dfrac{q_{\lambda}(w)}{2}-\sum\limits_{x\in\mathcal{V}}m(x)F(x,te+w)\right).
            \end{align}
Since $q_{\lambda}(e)=\lambda_k-\lambda$ and $q_{\lambda}(w)\leq 0,$ using \ref{6.1}, we get
        $$c\leq \max_{t>0,\;w\in {E^-}} \left(\dfrac{t^2( \lambda_k -  \lambda)}{2}-a_0\|te+w\|_{\ell_p^m}^p\right).$$
        Since $P^+$ is a bounded operator on a finite dimensional space $E(e)$ with respect to $\ell^p_m$ norm from Lemma~\ref{lem:6.2}, then since $P^+(w+ te)= te$ there exists $b_1>0$ such that
        $$\|te+w\|_{\ell^p_m}^p\geq b_1\|e\|_{\ell^p_m}^p.$$
        Thus, we get
         $$ c\leq \max_{t>0} \left(\dfrac{t^2( \lambda_k -  \lambda)}{2}-a_0 b_2t^p\right).$$
         With an easy calculation we obtain
\begin{equation}\label{6.7}
    c\leq C ( \lambda_k -  \lambda)^{\frac{p}{p-2}},
\end{equation}         
            for some $C>0$ depending only on $p$ and $b_2$.

From Theorem~\ref{existenceofcp}, there exists $u_{ \lambda}\in E$ such that $c_{ \lambda}=J_{ \lambda}(u_{ \lambda})$. Combining \eqref{6.4} with \eqref{6.7} and obtain
            \begin{equation*}
                \frac{( \lambda_k -  \lambda)\|u_\lambda\|_E^2}{C} \le J_\lambda( u_\lambda) = c \le C( \lambda_k -  \lambda)^{\frac{p}{p-2}}.
            \end{equation*}
            Hence,
            \begin{equation*}
                \| u_\lambda\|^2_E \le C^2 (\lambda_k - \lambda)^{\frac{2}{p-2}},
            \end{equation*}
            which implies \eqref{eq:bifurcationfirst}.  

            Let us now consider the case $\kappa =-1$. Assume that $\lambda \in (\lambda_{k-1}, \lambda_k)$ such that $\delta(\lambda) = \lambda- \lambda_{k-1}$. In this case $c= -J_\lambda(u_\lambda)$ and suppose $e$ is an eigenfunction to $\lambda_{k-1}$ with $\|e\|_{\ell^p_m} =1$. Then we obtain
            \begin{equation*}
            c= \inf_{w\in E^+} \max_{t>0} J_{ \lambda} (w + te) \leq \max_{t>0,\;w\in {E^+}} \left(\dfrac{t^2(\lambda-\lambda_{k-1} )}{2}-a_0\|w+te\|_{\ell_p^m}^p\right).
            \end{equation*}
            $P_-$ is a bounded operator on $\ell^p_m$ by Lemma~\ref{lem:6.2} and there exists $b_3>0$ such that
            \begin{equation*}
                \| e+t w\|_{\ell^p_m}^p \le b_3 \| e\|_{\ell^p_m}^p. 
            \end{equation*}
            Then similarly we obtain
            \begin{equation*}
                c \le \max_{t>0} \left ( \frac{t^2 ( \lambda-  \lambda_{k-1})}{2} - a_0 b_3 t^p \right ),
            \end{equation*}
            and one shows easily similar as before \eqref{eq:bifurcationsecond}.
            
 			\end{proof}

\appendix
\section{Critical Point Theory}\label{s7}
	
\normalfont

 Now, we give some of the results from \cite{sz-weth-10}. For this, first recall some related concepts: \\
 
  Let $\mathcal{E}$ be an abstract Hilbert space and $Q(u,v)$ be a bounded, symmetric
	bilinear form on $\mathcal{E}$. Let $Q(u)=Q(u,u)$ be the associated quadratic form. With respect to the associated quadratic form $Q(u)$, we decompose the Hilbert space $\mathcal{E}$ as follows 
	$$
	\mathcal{E}=\mathcal{E}^-\oplus \mathcal{E}^0\oplus E^+\,,
	$$
	here the form $Q$ is negative definite on
	$\mathcal{E}^{-}$, positive definite on $\mathcal{E}^+$ and $Q=0$ on $\mathcal{E}^0$. Thus, for any $ u, v\in \mathcal{E}$ we have
	$$
	u=u^-+u^0+u^+, \;\;\;\; Q(u,v)=Q(u^+,v^+)+Q(u^-,v^-),
	$$
	where $u^-, v^-\in \mathcal{E}^-$, $u^0\in \mathcal{E}^0$ and $u^+, v^+\in \mathcal{E}^+$. 
	
	On the space $\mathcal{E}$, we have a functional $J$ of the form
	$$
	J(u)=\frac{1}{2}Q(u)-\Phi(u)\,,
	$$
	where $\Phi$ is a $C^1$ functional on $\mathcal{E}$ such that $\Phi(0)=0$. The derivative of $J$ at $u$ is
	given by
	$$
	\nabla J(u)v=Q(u,v)-\nabla \Phi(u)v
	$$
	for all $v\in \mathcal{E}$.\\
	
	\noindent Set $F=\mathcal{E}^-\oplus \mathcal{E}^0$. For the existence of nontrivial critical points, we also need to define
	$$
	\mathcal{E}(u)=\mathbb{R}u\oplus \mathcal{F}=\mathbb{R}u^+\oplus \mathcal{F}\,,
	$$
	and
	$$
	{\hat {\mathcal{E}}}(u)=\{tu+v: t\geq 0, v\in \mathcal{F}\}=\{tu^++v: t\geq 0, v\in \mathcal{F}\}\,.
	$$
	In this section  we suppose the following
	assumptions hold:
	\begin{itemize}\em
		\item[(i)] The functional $\Phi$ is weakly lower semicontinuous and
		\begin{equation}\label{e3.1}
		\frac{1}{2}\nabla\Phi(u)u>\Phi(u)>0\;,\;\;\forall u\neq 0.
		\end{equation}
		
		\item[(ii)] For each $w\in \mathcal{E}\setminus \mathcal{F}$ the functional $J|_{{\hat {\mathcal{E}}}(w)}$ has  a unique
		nonzero critical point $m(w)\in {\hat {\mathcal{E}}}(w)$. At that point, $J|_{{\hat {\mathcal{E}}}(w)}$ achieves its global maximum.

	\end{itemize}
	
	\vspace{.2cm}
	
	The {\em generalized Nehari manifold} of the functional $J$ is defined by
	$$
	\mathcal{N}=\mathcal{N}(J)=\{u\in \mathcal{E}\setminus \mathcal{F} : \nabla J(u)u=0 \mbox{ and } \nabla J(u)v=0 \mbox{ for all }
	v\in \mathcal{F}\}\,.
	$$
	If $u\neq 0$ is a critical point of $J$, then 
	$$
	J(u)=J(u)-\frac{1}{2}\nabla J(u)u= \frac{1}{2}\nabla \Phi(u)u-\Phi(u)>0\,.
	$$
	But, the critical value $J(u)\leq 0$ for each $u\in \mathcal{F}$. This implies that $\mathcal{N}$ contains all nonzero
	critical points of $J$. Furthermore, if $w\in \mathcal{E}\setminus \mathcal{F}$, then $\mathcal{N}\cap {\hat {\mathcal{E}}}(w)$ consists
	of exactly one point $m(w)$, that is, $\mathbf{\mathcal{N}}=\{m(w) : w\in \mathcal{E}\setminus \mathcal{F}\}$.

	%\begin{remark}\label{r3.1}
	%	We assume that the subspace $\mathcal{E}^+$ is nonzero because otherwise Assumption $(i)$ implies that  the
	%	functional does not possesses nontrivial critical points.
	%\end{remark}
	
	We will use the next result from  \cite{sz-weth-10} (see Theorem 35) to prove the existence of solutions. %Notice that in
	%\cite{sz-weth-10} it is assumed implicitly that the subspace $\mathcal{E}^+$ is infinite dimensional. The
	%case of finite dimension is easily covered by the proof given there.
	
	\begin{theorem}\label{t3.2}
		We suppose $(i)$ and $(ii)$, and the following assumptions 
		\begin{itemize}
			\item[$(iii)$]  $\nabla\Phi(u)=o(\|u\|_{\mathcal{E}})$ as $u\to 0$;
			\item[$(iv)$] $\Phi(tu)/t^2\to \infty$ as $t\to\infty$ uniformly for $u$ on weakly compact subsets of
			$\mathcal{E}\setminus\{0\}$;
			\item[$(v)$] $\nabla\Phi$ is completely continuous.
		\end{itemize}
		are satisfied. Then
		$$
		c=\inf_{u\in\mathcal{N}} J(u) >0
		$$
		is a nontrivial critical value of $J$. Additionally, if $\Phi$ is even, then the equation $\nabla J(u)=0$ has	at least $\dim \mathcal{E}^+$ pairs of nontrivial solutions.
	\end{theorem}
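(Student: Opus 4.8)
The plan is to carry out the generalized Nehari manifold reduction of Szulkin and Weth, which trades the strongly indefinite functional $J$ on $\mathcal{E}$ for an even functional, bounded below and away from zero, on the unit sphere $S^+ := \{w \in \mathcal{E}^+ : \|w\|_{\mathcal{E}} = 1\}$. Assumption $(ii)$ already supplies, for each $w \in \mathcal{E}\setminus \mathcal{F}$, the unique maximizer $m(w) \in \hat{\mathcal{E}}(w)$, which lies on $\mathcal{N}$; since $\hat{\mathcal{E}}(w) = \hat{\mathcal{E}}(w^+/\|w^+\|)$, this descends to a bijection $\hat m : S^+ \to \mathcal{N}$, $w \mapsto m(w)$, with inverse $u \mapsto u^+/\|u^+\|_{\mathcal{E}}$. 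First I would check that $\hat m$ is a homeomorphism: if $w_n \to w$ in $S^+$, then $(m(w_n))$ stays bounded (using $(iv)$, as a divergent sequence would force $J \to -\infty$ on $\hat{\mathcal{E}}(w)$), and any weak limit point maximizes $J$ on $\hat{\mathcal{E}}(w)$, hence equals $m(w)$ by the uniqueness in $(ii)$; continuity of the inverse is immediate from the projection formula.

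Next I would establish positivity of the critical level and set up the reduced functional. On $\mathcal{N}$ one has $Q(u)=\nabla\Phi(u)u$, so $(i)$ gives $J(u)=\tfrac12\nabla\Phi(u)u-\Phi(u)>0$, whence $c\ge 0$. To bound $c$ away from zero I would use $(iii)$, which yields $\Phi(u)=o(\|u\|_{\mathcal{E}}^2)$, together with positive-definiteness of $Q$ on $\mathcal{E}^+$, to produce $\rho,\alpha>0$ with $J(u)\ge\alpha$ whenever $u\in\mathcal{E}^+$ and $\|u\|_{\mathcal{E}}=\rho$; then for $w\in S^+$ the maximizing property gives $J(\hat m(w))=\max_{\hat{\mathcal{E}}(w)}J\ge J(\rho w)\ge\alpha$, so $c=\inf_{S^+}\Psi\ge\alpha>0$ where $\Psi:=J\circ\hat m$. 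I would then show $\Psi\in C^1(S^+)$ with $\nabla\Psi(w)z=\|m(w)^+\|_{\mathcal{E}}\,\nabla J(m(w))z$ for $z\in T_wS^+$, so that $w$ is a critical point of $\Psi$ exactly when $m(w)$ is a nontrivial critical point of $J$, with equal critical values.

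The analytic core is compactness. I would take a minimizing sequence for $\Psi$ on $S^+$, upgrade it via Ekeland's variational principle to a Palais--Smale sequence, and transfer it through $\hat m$ to $u_n\in\mathcal{N}$ with $J(u_n)\to c$ and $\nabla J(u_n)\to 0$. Boundedness is the first hurdle: assuming $\|u_n\|_{\mathcal{E}}\to\infty$ and setting $v_n=u_n/\|u_n\|_{\mathcal{E}}\rightharpoonup v$, assumption $(iv)$ forces $\Phi(u_n)/\|u_n\|_{\mathcal{E}}^2\to\infty$ on the region where $v\neq 0$, contradicting $J(u_n)/\|u_n\|_{\mathcal{E}}^2\to 0$; the degenerate case $v=0$ is ruled out because then the negative and null components $v_n^-,v_n^0$ lie in finite-dimensional spaces and converge strongly to $0$, and $(i)$ together with the indefinite structure forces $\|v_n^+\|_{\mathcal{E}}\to 0$ as well, contradicting $\|v_n\|_{\mathcal{E}}=1$. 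Once $(u_n)$ is bounded, complete continuity of $\nabla\Phi$ (assumption $(v)$) gives strong convergence of $\nabla\Phi(u_n)$ along a subsequence; combining this with $\nabla J(u_n)=Q(u_n,\cdot)-\nabla\Phi(u_n)\to 0$ and the invertibility of $Q$ on $\mathcal{E}^-\oplus\mathcal{E}^+$ yields $u_n\to u$ strongly, so $u\in\mathcal{N}$, $J(u)=c$, and $\nabla J(u)=0$.

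For the multiplicity statement, when $\Phi$ is even the objects $J$, $\Psi$, $S^+$ and $\mathcal{N}$ are all invariant under $u\mapsto -u$, so $\Psi$ is an even $C^1$ functional, bounded below and satisfying $(PS)$ on the symmetric manifold $S^+$. I would then run a $\mathbb{Z}_2$-equivariant Lusternik--Schnirelmann / Krasnoselskii genus minimax over symmetric subsets of $S^+$; the standard equivariant deformation lemma produces at least $\dim\mathcal{E}^+$ critical pairs of $\Psi$ (interpreted as infinitely many when $\mathcal{E}^+$ is infinite-dimensional), each corresponding through $\hat m$ to a pair $\pm u$ with $\nabla J(u)=0$. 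I expect the main obstacle to be precisely the two compactness steps above: extracting boundedness of Palais--Smale sequences from the weak superquadraticity $(iv)$ in the absence of a global Ambrosetti--Rabinowitz condition, and upgrading the complete continuity of $\nabla\Phi$ to strong convergence in the strongly indefinite setting, since this is where the indefiniteness of $Q$ and the monotonicity encoded in $(i)$--$(ii)$ must be made to cooperate; by contrast, the homeomorphism and $C^1$-reduction properties of $\hat m$, while technical, are routine once $(i)$ and $(ii)$ are in force.
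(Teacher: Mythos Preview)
The paper does not prove this theorem at all: Theorem~\ref{t3.2} is quoted from Szulkin--Weth \cite[Theorem~35]{sz-weth-10}, and the appendix merely records the statement and the minimax characterization (Remark~\ref{r3.3}) for use in the proofs of Theorems~\ref{t5.1} and~\ref{theorem2}. So there is no ``paper's own proof'' to compare against; what you have written is a competent outline of the Szulkin--Weth argument itself (reduction via $\hat m:S^+\to\mathcal N$, positivity of $c$ from $(iii)$, Palais--Smale from $(v)$, genus/Lusternik--Schnirelmann in the even case), which is precisely the content of the cited reference.

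One genuine imprecision in your sketch: in the boundedness step for Palais--Smale sequences you dispose of the case $v=0$ by asserting that ``$v_n^-,v_n^0$ lie in finite-dimensional spaces and converge strongly to $0$''. Finite dimensionality of $\mathcal E^-\oplus\mathcal E^0$ is \emph{not} among the abstract hypotheses $(i)$--$(v)$; it happens to hold in the concrete application (cf.\ the proof of Theorem~\ref{t5.1}, where it is used to verify $(ii)$), which is presumably why you reached for it. In the abstract Szulkin--Weth proof, boundedness of $(\hat m(w_n))$ is obtained instead directly from the maximizing property built into $(ii)$: for $w_n\to w$ in $S^+$ one compares $J(\hat m(w_n))\ge J(sw_n + y)$ for all $s\ge 0$, $y\in\mathcal F$, and combines this with $(iv)$ and the weak lower semicontinuity in $(i)$ to derive a contradiction if $\|\hat m(w_n)\|_{\mathcal E}\to\infty$, without ever invoking $\dim\mathcal F<\infty$. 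Apart from this point, your plan matches the reference.
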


	\begin{remark}\label{r3.3}
		The infimum of $J$ over $\mathcal{N}$ has the following minimax characterization
		$$
		c = \inf_{w\in \mathcal{E}\setminus \mathcal{F}} \max_{u\in{\hat{\mathcal{E}}}(w)} J(u)=\inf_{w\in \mathcal{E}^+, \;\|w\|=1}
		\max_{u\in{\hat {\mathcal{E}}}(w)} J(u)\,.
		$$
			The critical value $c$ and the corresponding critical points are called \emph{ground level} and 
			\emph{ground critical points} of the functional, respectively.
	\end{remark}

\end{document}